\documentclass[a4paper,oneside,11pt]{article}

\usepackage[a4paper]{geometry}
\usepackage{aeguill}                    % ... or a4paper or a5paper or ... 
\usepackage{graphicx}
\usepackage{amsmath,amsfonts,amssymb,amsthm}
\usepackage{pstricks}
\usepackage{epstopdf}
\DeclareGraphicsRule{.tif}{png}{.png}{`convert #1 `dirname #1`/`basename #1 .tif`.png}

\usepackage[utf8]{inputenc} 
\usepackage[T1]{fontenc} 
 
\usepackage[english]{babel} 

\title{The boundary of the outer space of a free product}
\author{Camille Horbez}
	
	\date{}
\usepackage[all]{xy}

\usepackage{bbm}
\begin{document}
\maketitle
%\tableofcontents
\newtheorem{de}{Definition} [section]
\newtheorem{theo}[de]{Theorem} 
\newtheorem{prop}[de]{Proposition}
\newtheorem{lemma}[de]{Lemma}
\newtheorem{cor}[de]{Corollary}
\newtheorem{propd}[de]{Proposition-Definition}

\theoremstyle{remark}
\newtheorem{rk}[de]{Remark}
\newtheorem{ex}[de]{Example}
\newtheorem{question}[de]{Question}

\normalsize

\newcommand{\coucou}[1]{\footnote{#1}\marginpar{$\leftarrow$}}

\addtolength\topmargin{-.5in}
\addtolength\textheight{1.in}
\addtolength\oddsidemargin{-.045\textwidth}
\addtolength\textwidth{.09\textwidth}

\begin{abstract}
Let $G$ be a countable group that splits as a free product of groups of the form $G=G_1\ast\dots\ast G_k\ast F_N$, where $F_N$ is a finitely generated free group. We identify the closure of the outer space $\mathbb{P}\mathcal{O}(G,\{G_1,\dots,G_k\})$ for the axes topology with the space of projective minimal, \emph{very small} $(G,\{G_1,\dots,G_k\})$-trees, i.e. trees whose arc stabilizers are either trivial, or cyclic, closed under taking roots, and not conjugate into any of the $G_i$'s, and whose tripod stabilizers are trivial. Its topological dimension is equal to $3N+2k-4$, and the boundary has dimension $3N+2k-5$. We also prove that any very small $(G,\{G_1,\dots,G_k\})$-tree has at most $2N+2k-2$ orbits of branch points.
\end{abstract}

\section*{Introduction}

Let $G$ be a countable group that splits as a free product $$G=G_1\ast\dots\ast G_k\ast F_N,$$ where $F_N$ denotes a finitely generated free group of rank $N$. We assume that $N+k\ge 2$. A natural group of automorphisms associated to such a splitting is the group $\text{Out}(G,\{G_1,\dots,G_k\})$ consisting of those outer automorphisms of $G$ that preserve the conjugacy classes of each of the groups $G_i$. 

The study of the group $\text{Out}(F_N)$ of outer automorphisms of a finitely generated free group has greatly benefited from the study of its action on Culler and Vogtmann's outer space \cite{CV86}, as well as some hyperbolic complexes. The present paper is a starting point of a work in which we extend results about the geometry of these $\text{Out}(F_N)$-spaces to analogues for free products, with a view to establishing a Tits alternative for the group of outer automorphisms of a free product \cite{Hor14-8}. The second main step towards this will be to define hyperbolic complexes equipped with $\text{Out}(G,\{G_1,\dots,G_k\})$-actions, and give a description of the Gromov boundary of the graph of cyclic splittings of $G$ relative to the $G_i$'s \cite{Hor14-6}. 

The group $\text{Out}(G,\{G_1,\dots,G_k\})$ acts on a space $\mathbb{P}\mathcal{O}(G,\{G_1,\dots,G_k\})$ called \emph{outer space}. This was introduced by Guirardel and Levitt in \cite{GL07}, who generalized Culler and Vogtmann's construction \cite{CV86} of an outer space $CV_N$ associated to a finitely generated free group of rank $N$, with a view to proving finiteness properties of the group $\text{Out}(G,\{G_1,\dots,G_k\})$. The outer space $\mathbb{P}\mathcal{O}(G,\{G_1,\dots,G_k\})$ is defined as the space of all $G$-equivariant homothety classes of minimal \emph{Grushko $(G,\{G_1,\dots,G_k\})$-trees}, i.e. metric simplicial $G$-trees in which nontrivial point stabilizers coincide with the conjugates of the $G_i$'s, and edge stabilizers are trivial.

Outer space can be embedded into the projective space $\mathbb{PR}^G$ by mapping any tree in $\mathbb{P}\mathcal{O}(G,\{G_1,\dots,G_k\})$ to the collection of all translation lengths of elements in $G$. The goal of the present paper is to describe the closure of the image of this embedding. 

The closure of Culler and Vogtmann's classical outer space has been identified by Bestvina and Feighn \cite{BF94} and Cohen and Lustig \cite{CL95}, with the space of projective length functions of minimal, \emph{very small} actions of $F_N$ on $\mathbb{R}$-trees. An $F_N$-tree is \emph{very small} if arc stabilizers are cyclic (possibly trivial) and closed under taking roots, and tripod stabilizers are trivial. 

More precisely, Cohen and Lustig have first proved \cite[Theorem I]{CL95} that $\overline{CV_N}$ is contained in the space of projective length functions of very small $F_N$-actions on $\mathbb{R}$-trees. In addition, they have shown that every simplicial, very small $F_N$-tree is a limit of free and simplicial actions \cite[Theorem II]{CL95}. Bestvina and Feighn have shown that this remains true of every very small (possibly nonsimplicial) $F_N$-action on an $\mathbb{R}$-tree. However, their proof does not seem to handle the case of geometric actions that are dual to foliated band complexes, one of whose minimal components is a measured foliation on a nonorientable surface, and in which some arc stabilizer is nontrivial. Indeed, in this case, it is not clear how to approximate the foliation by rational ones without creating any one-sided compact leaf, and one-sided compact leaves are an obstruction for the dual action to be very small (arc stabilizers are not closed under taking roots). Building on Cohen and Lustig's and Bestvina and Feighn's arguments, and using approximation techniques due to Levitt and Paulin \cite{LP97} and Guirardel \cite{Gui98}, we reprove the fact that $\overline{CV_N}$ identifies with the space of minimal, very small projective $F_N$-trees. Our proof tackles both cases of simplicial and nonsimplicial trees at the same time (it gives a new interpretation of Cohen and Lustig's argument in the simplicial case). We work in the more general framework of free products of groups. A \emph{$(G,\{G_1,\dots,G_k\})$-tree} is an $\mathbb{R}$-tree, equipped with a $G$-action, in which all $G_i$'s fix a point. A $(G,\{G_1,\dots,G_k\})$-tree will be termed \emph{very small} if arc stabilizers are either trivial, or cyclic, closed under taking roots, and not conjugate into any of the $G_i$'s, and tripod stabilizers are trivial. We prove the following theorem.

\theoremstyle{plain}
\newtheorem*{theo:1}{Theorem 1}

\begin{theo:1}\label{cv-closure-intro}
The closure $\overline{\mathbb{P}\mathcal{O}(G,\{G_1,\dots,G_k\})}$ in $\mathbb{PR}^G$ is compact, and it is the space of projective length functions of minimal, very small $(G,\{G_1,\dots,G_k\})$-trees.
\end{theo:1}

When $T$ is a $(G,\{G_1,\dots,G_k\})$-tree with trivial arc stabilizers, we can be a bit more precise about the approximations we get, and show that $T$ is an unprojectivized limit of Grushko $(G,\{G_1,\dots,G_k\})$-trees $T_n$, that come with $G$-equivariant $1$-Lipschitz maps from $T_n$ to $T$, see Theorem \ref{Lip-approx}.

We then compute the topological dimension of the closure and the boundary of the outer space of a free product of groups. In the case of free groups, Bestvina and Feighn have shown in \cite{BF94} that $\overline{CV_N}$ has dimension $3N-4$, their result was extended by Gaboriau and Levitt who proved in addition that $\partial CV_N$ has dimension $3N-5$ in \cite{GL95}. Following Gaboriau and Levitt's arguments, we show the following.

\theoremstyle{plain}
\newtheorem*{theo:2}{Theorem 2}

\begin{theo:2}
The space $\overline{\mathbb{P}\mathcal{O}(G,\{G_1,\dots,G_k\})}$ has topological dimension $3N+2k-4$, and $\partial \mathbb{P}\mathcal{O}(G,\{G_1,\dots,G_k\})$ has dimension $3N+2k-5$.
\end{theo:2}

Along the proof, we provide a bound on the number of orbits of branch points and centers of inversion in a very small $(G,\{G_1,\dots,G_k\})$-tree, and on the possible Kurosh ranks of point stabilizers. 

We also introduce the slightly larger class of \emph{tame} $(G,\{G_1,\dots,G_k\})$-trees, defined as those trees whose arc stabilizers are either trivial, or cyclic and non-conjugate into any $G_i$, and with a finite number of orbits of directions at branch points and inversion points. We study some properties of this class, and provide some conditions under which a limit of tame $(G,\mathcal{F})$-trees is tame. This class will turn out to be the right class of trees for carrying out our arguments to describe the Gromov boundary of the graph of cyclic splittings of $G$ relative to the $G_i$'s in \cite{Hor14-6}.  
\\
\\
\indent The paper is organized as follows. In Section \ref{sec-background}, we review basic facts about free products of groups, and the associated outer spaces. We prove in Section \ref{sec-compactness} that the space $VSL(G,\{G_1,\dots,G_k\})$ of projective, minimal, very small $(G,\{G_1,\dots,G_k\})$-trees is compact. In Section \ref{sec-geom}, we introduce a notion of \emph{geometric} $(G,\{G_1,\dots,G_k\})$-trees, and explain in particular how to approximate every very small $(G,\{G_1,\dots,G_k\})$-tree by a sequence of geometric $(G,\{G_1,\dots,G_k\})$-trees. We then compute the topological dimension of $VSL(G,\{G_1,\dots,G_k\})$ in Section \ref{sec-dimension}, and we then identify it with the closure of outer space in Section \ref{sec-closure}. We finally introduce the class of tame $(G,\{G_1,\dots,G_k\})$-trees, and discuss some of its properties, in Section \ref{sec-good}.

\section*{Acknowledgments}

I warmly thank my advisor Vincent Guirardel for his help, his rigour and his patience in reading through first drafts of this work. I am also highly grateful to the referee for their very careful reading of the paper and their numerous and valuable remarks and suggestions. These suggestions led in particular to significant clarifications in the approach to the notion of geometric trees. Finally, I acknowledge support from ANR-11-BS01-013 and from the Lebesgue Center of Mathematics.

\setcounter{tocdepth}{1}
\tableofcontents

\section{Background}\label{sec-background}

\subsection{Free products of groups and free factors}\label{sec-free-products}

Let $G$ be a countable group which splits as a free product of groups of the form $$G=G_1\ast\dots\ast G_k\ast F,$$ where $F$ is a finitely generated free group. We let $\mathcal{F}:=\{[G_1],\dots,[G_k]\}$ be the finite collection of the $G$-conjugacy classes of the $G_i$'s, which we call a \emph{free factor system} of $G$. The rank of the free group $F$ arising in such a splitting only depends on $\mathcal{F}$. We call it the \emph{free rank} of $(G,\mathcal{F})$ and denote it by $\text{rk}_f(G,\mathcal{F})$. The \emph{Kurosh rank} of $(G,\mathcal{F})$ is defined as $\text{rk}_K(G,\mathcal{F}):=\text{rk}_f(G,\mathcal{F})+|\mathcal{F}|$. Subgroups of $G$ that are conjugate into one of the subgroups in $\mathcal{F}$ will be called \emph{peripheral}.

A \emph{splitting} of $G$ is a simplicial tree $S$, equipped with a minimal and simplicial action of $G$ (here minimality of the action means that $S$ contains no proper nonempty $G$-invariant subtree). We say that a subgroup $H\subseteq G$ is \emph{elliptic} in $S$ if there exists a point in $S$ that is fixed by all elements of $H$. A \emph{$(G,\mathcal{F})$-free splitting} is a splitting of $G$ in which all subgroups in $\mathcal{F}$ are elliptic, and all edge stabilizers are trivial. A \emph{$(G,\mathcal{F})$-free factor} is a subgroup of $G$ which is a vertex stabilizer in some $(G,\mathcal{F})$-free splitting.

Subgroups of free products have been studied by Kurosh in \cite{Kur34}. Let $H$ be a subgroup of $G$. Let $T$ be the Bass--Serre tree of the graph of groups decomposition of $G$ represented in Figure \ref{fig-Grushko-rose}, which we call a \emph{standard} $(G,\mathcal{F})$-free splitting. By considering the $H$-minimal subtree in $T$, we get the existence of a (possibly infinite) set $J$, together with an integer $i_j\in\{1,\dots,k\}$, a nontrivial subgroup $H_{j}\subseteq G_{i_j}$ and an element $g_{j}\in G$ for each $j\in J$, and a (not necessarily finitely generated) free subgroup $F'\subseteq G$, so that $$H=\ast_{j\in J}~ g_{j}H_{j}g_{j}^{-1}\ast F'.$$ This decomposition is called a \emph{Kurosh decomposition} of $H$. The \emph{Kurosh rank} of $H$ (which can be infinite in general) is defined as $\text{rk}_K(H):=\text{rk}(F')+|J|$, it does not depend on a Kurosh decomposition of $H$. We let $\mathcal{F}_{|H}$ be the set of all $H$-conjugacy classes of the subgroups $g_{j}H_{j}g_{j}^{-1}$, for $j\in J$, which does not depend on a Kurosh decomposition of $H$ either. 

\begin{figure}
\begin{center}
\input{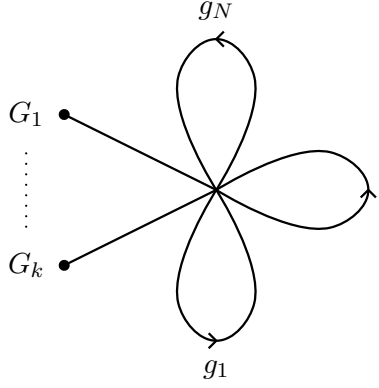}
\caption{A standard $(G,\mathcal{F})$-free splitting.}
\label{fig-Grushko-rose}
\end{center}
\end{figure}

When $H$ is a $(G,\mathcal{F})$-free factor, we have $H_{j}=G_{i_j}$ for all $j\in J$, and all integers $i_j$ are distinct (in particular $J$ is finite). In this case, the free group $F'$ is finitely generated. Hence the Kurosh rank of $H$ is finite.

\subsection{Outer space and its closure}\label{sec-outer-space}

An $\mathbb{R}$-tree is a metric space $(T,d_T)$ in which any two points $x,y\in T$ are joined by a unique embedded topological arc, which is isometric to a segment of length $d_T(x,y)$.

Let $G$ be a countable group, and let $\mathcal{F}$ be a free factor system of $G$. In the present paper, a \emph{$(G,\mathcal{F})$-tree} is an $\mathbb{R}$-tree $T$ equipped with an isometric action of $G$, in which every peripheral subgroup fixes a unique point. A \emph{Grushko $(G,\mathcal{F})$-tree} is a minimal, simplicial metric $(G,\mathcal{F})$-tree with trivial edge stabilizers, whose collection of point stabilizers coincides with the conjugates of the subgroups in $\mathcal{F}$. Two $(G,\mathcal{F})$-trees are \emph{equivalent} if there exists a $G$-equivariant isometry between them.

The \emph{unprojectivized outer space} $\mathcal{O}(G,\mathcal{F})$ is defined to be the space of all equivalence classes of Grushko $(G,\mathcal{F})$-trees. \emph{Outer space} $\mathbb{P}\mathcal{O}(G,\mathcal{F})$ is defined as the space of homothety classes of trees in $\mathcal{O}(G,\mathcal{F})$. We note that in the case where $\mathcal{F}=\{G\}$, outer space is reduced to a single point, corresponding to the trivial action of $G$ on a point.

For all $(G,\mathcal{F})$-trees $T$ and all $g\in G$, the \emph{translation length} of $g$ in $T$ is defined to be $$||g||_T:=\inf_{x\in T}d_T(x,gx).$$

\begin{theo} (Culler--Morgan \cite{CM87}) \label{Culler-Morgan}
The map 

\begin{displaymath}
\begin{array}{cccc}
i:&\mathcal{O}(G,\mathcal{F})&\to &\mathbb{R}^G\\
&T&\mapsto &(||g||_T)_{g\in G}
\end{array}
\end{displaymath}

\noindent is injective.
\end{theo}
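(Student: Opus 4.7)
The plan is to reconstruct a Grushko $(G,\mathcal{F})$-tree $T$ up to $G$-equivariant isometry from its translation length function $(\|g\|_T)_{g\in G}$. A first observation is that, since $N+k\geq 2$ and $T$ is minimal with trivial edge stabilizers, the action of $G$ on $T$ is \emph{irreducible}: there is no global fixed point, no fixed end, and no $G$-invariant line. Indeed, the pointwise stabilizer of a line would be trivial (edge stabilizers are trivial), so an invariant line would embed $G$ into $\mathrm{Isom}(\mathbb{R})$, which is impossible for a free product with $N+k\geq 2$. Irreducibility guarantees the existence of two hyperbolic elements $g,h\in G$ whose axes $A_g,A_h$ are either disjoint or meet in a compact segment, and more generally the existence of enough hyperbolic elements to probe the tree.

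The next step is to recall how to detect hyperbolicity and axis geometry from the length function: an element $g$ is hyperbolic iff $\|g\|_T>0$, its axis $A_g$ is then the minset of $x\mapsto d_T(x,gx)$, and one has $d_T(x,gx)=\|g\|_T+2\,d_T(x,A_g)$ for every $x\in T$. For two hyperbolic elements $g,h$, a careful case analysis shows that $\|g\|_T,\|h\|_T,\|gh\|_T,\|gh^{-1}\|_T$ together determine whether $A_g$ and $A_h$ are disjoint, and in any case determine either the distance $d_T(A_g,A_h)$ or the length of the overlap $A_g\cap A_h$, as well as the relative orientations of the two translations. Typical formulas to be checked: when $A_g\cap A_h$ is a segment of length $\ell$, one of $\|gh\|_T$ or $\|gh^{-1}\|_T$ equals $\|g\|_T+\|h\|_T$ while the other equals $\|g\|_T+\|h\|_T-2\ell$; when $A_g$ and $A_h$ are disjoint at distance $d$, both equal $\|g\|_T+\|h\|_T+2d$.

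Finally, I would exploit minimality: $T$ coincides with the union of the axes $A_g$ over hyperbolic $g\in G$. Having reconstructed the isometry type of each pair $(A_g\cup A_h, d_T)$ purely from length data, and knowing that the $G$-action permutes axes via $gA_h=A_{ghg^{-1}}$, one can glue these pieces together in a $G$-equivariant way, producing a metric $G$-tree isomorphic to $T$. This yields injectivity of $i$.

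The main obstacle is the pair-of-axes step: one must treat separately the configurations (coincident axes, disjoint axes, axes meeting at a single point, axes overlapping in a segment with matching or with opposite orientations) and verify that the extracted distances and overlap lengths really are intrinsic invariants of $T$. Once these formulas are secured, the gluing and equivariance steps are essentially formal — this is the content of Culler and Morgan's argument in \cite{CM87}.
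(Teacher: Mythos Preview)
The paper does not actually prove this statement; it is quoted verbatim as a theorem of Culler and Morgan and attributed to \cite{CM87} with no argument given. Your sketch is a faithful outline of the original Culler--Morgan proof: establish irreducibility of the action, recover the relative position of any pair of axes from the numbers $\|g\|,\|h\|,\|gh\|,\|gh^{-1}\|$, and rebuild $T$ as the $G$-equivariant union of these axes.

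One small caution on the formulas you state for the overlapping case: the identity ``one of $\|gh\|,\|gh^{-1}\|$ equals $\|g\|+\|h\|$ and the other equals $\|g\|+\|h\|-2\ell$'' holds only when the overlap length $\ell$ does not exceed $\min(\|g\|,\|h\|)$; for longer overlaps the product can be elliptic. Culler and Morgan sidestep this by replacing $g,h$ with high powers $g^n,h^n$, which have the same axes but translation lengths large compared to $\ell$. Since you already flag the pair-of-axes analysis as the main obstacle and defer to \cite{CM87} for the verification, your proposal is in line with the cited source.
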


We equip $\mathcal{O}(G,\mathcal{F})$ with the topology induced by this embedding, which is called the \emph{axes topology}, and we denote by $\overline{\mathcal{O}(G,\mathcal{F})}$ the closure of the image of this embedding. Culler and Morgan have shown in \cite[Theorem 4.5]{CM87} that if $G$ is finitely generated, then the subspace of $\mathbb{PR}^G$ made of projective classes of translation length functions of minimal $G$-trees is compact. This can fail to be true in general if $G$ is not finitely generated. For example, if $G=A\ast B$ with $A$ not finitely generated, and if $(A_i)_{i\in\mathbb{N}}$ is an increasing sequence of finitely generated subgroups of $A$ whose union equals $A$, then the projective translation length functions of the Bass--Serre trees of the splittings $G=A\ast_{A_i}(B\ast A_i)$ do not admit any converging subsequence in $\mathbb{PR}^G$. However, the key point in Culler--Morgan's argument \cite[Proposition 4.1]{CM87} still holds in our context, in the form the following proposition.

\begin{prop}\label{replace-CM}
Let $R$ be a standard Grushko $(G,\mathcal{F})$-tree, in which all edges are assigned length $1$. Then there exists a finite set $Y\subseteq G$ such that for all $g\in G$ and all trees $T\in\overline{\mathcal{O}(G,\mathcal{F})}$, we have $||g||_T\le M||g||_R$, where $M:=\max_{h\in Y}||h||_T$.
\end{prop}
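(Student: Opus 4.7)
The plan is to follow the argument of Culler--Morgan's Proposition~4.1 in \cite{CM87}, adapted to the fact that $G$ need not be finitely generated. The essential feature is that the peripheral subgroups $G_1,\dots,G_k$, which carry any failure of finite generation, are elliptic in every $(G,\mathcal{F})$-tree.

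Given $T\in\overline{\mathcal{O}(G,\mathcal{F})}$, I would first construct an explicit $G$-equivariant piecewise-linear map $f\colon R\to T$ and bound its Lipschitz constant. A fundamental domain $D\subset R$ consists of the rose-center vertex $x_0$, the peripheral vertices $v_1,\dots,v_k$ (with stabilizer $G_i$) adjacent to $x_0$, and the free edges $[x_0,g_jx_0]$ indexed by a basis $g_1,\dots,g_N$ of $F_N$. After choosing a basepoint $y_0\in T$, set $f(x_0):=y_0$, $f(v_i):=p_i$ where $p_i$ is the unique $G_i$-fixed point of $T$, $f(g_jx_0):=g_jy_0$, interpolate affinely on each edge of $D$, and extend by equivariance. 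Since each edge of $R$ has length $1$, the map $f$ is $L$-Lipschitz with
\[L\;=\;\max\bigl\{d_T(y_0,p_i),\,d_T(y_0,g_jy_0)\mid 1\le i\le k,\,1\le j\le N\bigr\},\]
and equivariance gives $\|g\|_T\le L\cdot\|g\|_R$ for every $g\in G$. The whole question thus reduces to producing a finite set $Y\subseteq G$ (independent of $T$) and a choice $y_0\in T$ making $L\le M:=\max_{h\in Y}\|h\|_T$, possibly after absorbing a universal multiplicative constant into $Y$ by replacing each $h\in Y$ by an appropriate power (using $\|h^n\|_T=n\|h\|_T$).

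For each $i$ I would fix a nontrivial element $h_i\in G_i$ and let $Y$ consist of the stable letters $g_j$ together with a finite list of short products involving the $h_i$'s, such as $h_ih_{i'}$ (for $i\ne i'$), $h_ig_j$, $g_jh_ig_j^{-1}h_{i'}$, and $h_ig_jh_ig_j^{-1}$. Using the classical identity $\|ab\|_T=2\,d_T(F_a,F_b)$ for two elliptic isometries of a tree with disjoint fixed subtrees $F_a,F_b$ (together with the formula for the translation length of a product of a hyperbolic and an elliptic), each of the quantities $d_T(p_i,p_{i'})$, $d_T(y_0,g_jy_0)$ and $d_T(p_i,g_jp_{i'})$ can be bounded by a universal constant times the translation length in $T$ of some element of $Y$; choosing $y_0$ inside the convex hull of $\{p_1,\dots,p_k\}$ (or near the axis of a hyperbolic element of $Y$ when all $p_i$'s coincide) then yields $L\le CM$ for a constant $C$ depending only on $R$.

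The main obstacle is that a chosen $h_i$ may fix, in some tree $T$, a subtree strictly larger than $\{p_i\}$, in which case $\|h_ih_{i'}\|_T<2\,d_T(p_i,p_{i'})$ and the naive bound fails. To handle this I would enrich $Y$ with conjugates of the $h_i$'s by suitable words in the $g_j$'s, ``pushing'' their fixed subtrees away from the relevant region so that for every $T$ some element of $Y$ witnesses the desired displacement up to a universal factor; minimality of the action, together with the fact that in any $(G,\mathcal{F})$-tree the intersection over $h\in G_i$ of the fixed subtrees of $h$ reduces to $\{p_i\}$, then ensures that a single universal finite $Y$ works for all $T\in\overline{\mathcal{O}(G,\mathcal{F})}$ at once.
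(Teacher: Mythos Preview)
Your overall plan---build a $G$-equivariant Lipschitz map $f\colon R\to T$ by sending peripheral vertices to the unique fixed points of the $G_i$'s and interpolating linearly, then bound the Lipschitz constant $L$ by translation lengths of finitely many elements---is exactly the right idea, and is the content behind the result the paper cites from \cite{Hor14-6}. The strategic misstep is that you try to carry this out directly for an arbitrary $T\in\overline{\mathcal{O}(G,\mathcal{F})}$. This is what creates the obstacle you describe in your last paragraph, and your proposed fix (enriching $Y$ with conjugates and invoking minimality) is not a proof: it is not clear how a \emph{single} finite set $Y$, chosen in advance, can witness the relevant displacements uniformly over all $T$, especially since at this point in the paper nothing is yet known about arc stabilizers of trees in the closure.

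The clean way out, which is precisely what the paper does, is to prove the inequality first for $T\in\mathcal{O}(G,\mathcal{F})$ and then pass to the closure. For a Grushko tree, edge stabilizers are trivial, so any nontrivial $h_i\in G_i$ fixes \emph{exactly} the point $p_i$; your obstacle simply does not arise, and the identities you quote (such as $\|h_ih_{i'}\|_T=2\,d_T(p_i,p_{i'})$) hold on the nose. Taking for instance $y_0=p_1$ when $k\ge 1$, one bounds $d_T(p_1,p_i)$ by $\tfrac12\|h_1h_i\|_T$ and $d_T(p_1,g_jp_1)$ by $\tfrac12\|h_1\cdot g_jh_1g_j^{-1}\|_T$, producing a finite $Y$ depending only on $R$. (When $k=0$ you are in the classical free-group situation and Culler--Morgan's original argument applies verbatim.) Once the inequality $\|g\|_T\le M\|g\|_R$ holds for every Grushko tree, it passes to the closure immediately: both sides are continuous functions of the length function $T\in\mathbb{R}^G$, and the inequality is preserved under limits. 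The paper's proof is literally this two-line reduction: cite the inequality for $T\in\mathcal{O}(G,\mathcal{F})$, then take limits.
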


\begin{proof}
The existence of a finite set $Y\subseteq G$ that works for all trees $T\in\mathcal{O}(G,\mathcal{F})$ is established in \cite[Theorem 4.7]{Hor14-6}. Proposition \ref{replace-CM} then follows by taking limits.
\end{proof}

Arguing as in Culler--Morgan's proof of \cite[Theorem 4.2]{CM87}, we then obtain the following result.

\begin{prop}
The closure $\overline{\mathcal{O}(G,\mathcal{F})}$ is projectively compact. \qed
\end{prop}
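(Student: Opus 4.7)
The plan is to adapt Culler--Morgan's proof of \cite[Theorem 4.2]{CM87}, with Proposition \ref{replace-CM} playing the role that finite generation plays in their original argument. Since $G$ is countable, $\mathbb{R}^G$ equipped with the product topology is metrizable, and hence so is $\mathbb{PR}^G$; it is therefore enough to prove sequential compactness of the projectivization of $\overline{\mathcal{O}(G,\mathcal{F})}$.

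Given a sequence of projective classes $([T_n])_{n\in\mathbb{N}}$, the first step is to choose a good representative in each class. Let $Y\subseteq G$ be the finite set provided by Proposition \ref{replace-CM}, and set $M_n:=\max_{h\in Y}\|h\|_{T_n}$. I would argue that $M_n>0$: otherwise Proposition \ref{replace-CM} would force $\|g\|_{T_n}=0$ for every $g\in G$, contradicting the fact that $[T_n]$ defines a projective class. Rescaling within the projective class, I may therefore assume $M_n=1$, and Proposition \ref{replace-CM} then yields the uniform bound $\|g\|_{T_n}\le \|g\|_R$ for every $g\in G$ and every $n$.

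Next, since $Y$ is finite, the pigeonhole principle lets me pass to a subsequence along which the maximum in the definition of $M_n$ is achieved at a fixed element $h_0\in Y$, so that $\|h_0\|_{T_n}=1$ for all $n$. Since $G$ is countable and each sequence $(\|g\|_{T_n})_n$ lies in the compact interval $[0,\|g\|_R]$, a standard diagonal extraction produces a further subsequence along which $\|g\|_{T_n}$ converges for every $g\in G$, to some limit $\ell(g)$. The resulting function $\ell\in\mathbb{R}^G$ lies in $\overline{\mathcal{O}(G,\mathcal{F})}$ since this set is by definition closed in the product topology, and the identity $\ell(h_0)=1$ guarantees $\ell\neq 0$, so $[\ell]$ is a well-defined projective class. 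The pointwise convergence $\|g\|_{T_n}\to \ell(g)$ then translates directly into $[T_n]\to[\ell]$ in $\mathbb{PR}^G$, establishing sequential compactness.

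The only non-trivial input is the existence of the uniform, $n$-independent bound $\|g\|_{T_n}\le \|g\|_R$ after a single global rescaling: without such a bound the diagonal extraction could diverge or escape to a zero limit. This is exactly what Proposition \ref{replace-CM} provides, so I expect no further obstacle; the remaining pieces (metrizability of $\mathbb{PR}^G$, closedness under pointwise limits, the pigeonhole and diagonal arguments) are entirely routine.
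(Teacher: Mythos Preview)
Your argument is correct and is precisely the Culler--Morgan argument the paper invokes, with Proposition~\ref{replace-CM} supplying the uniform bound that finite generation provides in their setting. One small technical quibble: the implication ``$\mathbb{R}^G$ metrizable, hence so is $\mathbb{PR}^G$'' is not automatic, since quotients of metrizable spaces need not be metrizable; but you do not actually need this, since your normalized representatives (those with $\max_{h\in Y}\|h\|=1$) form a subset of the metrizable space $\mathbb{R}^G$, and showing that subset is sequentially compact---which is what your diagonal argument does---already yields compactness there, hence compactness of its continuous image in $\mathbb{PR}^G$.
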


The goal of the present paper is to identify the closure $\overline{\mathcal{O}(G,\mathcal{F})}$ with the space of \emph{very small} $(G,\mathcal{F})$-trees, which are defined in the following way. 

\begin{de}
A $(G,\mathcal{F})$-tree $T$ is \emph{small} if arc stabilizers in $T$ are either trivial, or cyclic (and non-peripheral). A $(G,\mathcal{F})$-tree $T$ is \emph{very small} if it is small, and in addition nontrivial arc stabilizers in $T$ are closed under taking roots, and tripod stabilizers in $T$ are trivial.
\end{de}

We note that the trivial action of $G$ on a point is very small in the above sense. We denote by $VSL(G,\mathcal{F})$ the subspace of $\mathbb{PR}^{G}$ made of projective classes of minimal, nontrivial, very small $(G,\mathcal{F})$-trees, which we equip with the axes topology.

\subsection{The equivariant Gromov--Hausdorff topology}\label{sec-Lipschitz}

\paragraph*{The equivariant Gromov--Hausdorff topology on the space of $(G,\mathcal{F})$-trees.} The space $\mathcal{O}(G,\mathcal{F})$ can also be equipped with the \emph{equivariant Gromov--Hausdorff topology} \cite{Pau88}, which is equivalent to the axes topology \cite{Pau89}. We now recall the definition of the equivariant Gromov--Hausdorff topology on the space of $(G,\mathcal{F})$-trees. Let $T$ and $T'$ be two $(G,\mathcal{F})$-trees, let $K\subset T$ and $K'\subset T'$ be finite subsets, let $P\subset G$ be a finite subset of $G$, and let $\epsilon>0$. A \emph{$P$-equivariant $\epsilon$-relation} between $K$ and $K'$ is a subset $R\subseteq K\times K'$ whose projection to each factor is surjective, such that for all $(x,x'),(y,y')\in R$ and all $g,h\in P$, we have $|d_T(gx,hy)-d_{T'}(gx',hy')|<\epsilon$. A basis of open sets for the equivariant Gromov--Hausdorff topology is given by the sets $O(T,K,P,\epsilon)$ of all $(G,\mathcal{F})$-trees $T'$ for which there exist a finite subset $K'\subset T'$ and a $P$-equivariant $\epsilon$-relation $R\subseteq K\times K'$ \cite{Pau88}.

\paragraph*{The equivariant Gromov--Hausdorff topology on the space of pointed $(G,\mathcal{F})$-trees.} The equivariant Gromov--Hausdorff topology can also be defined on the space of pointed $(G,\mathcal{F})$-trees. Let $T$ be a $(G,\mathcal{F})$-tree, and let $(x_1,\dots,x_l)\in T^l$. Let $K\subset T$ and $P\subset G$ be finite subsets, and let $\epsilon>0$. A basis of open sets for the equivariant Gromov--Hausdorff topology is given by the sets $O'((T,(x_1,\dots,x_l)),K,P,\epsilon)$ of all pointed $(G,\mathcal{F})$-trees $(T',(x'_1,\dots,x'_l))$ for which there exist a finite subset $K'\subset T'$ and a $P$-equivariant $\epsilon$-relation $R\subseteq (K\cup\{x_1,\dots,x_l\})\times (K'\cup\{x'_1,\dots,x'_l\})$ with $(x_i,x'_i)\in R$ for all $i\in \{1,\dots,l\}$. 

Let $T$ be a $(G,\mathcal{F})$-tree, let $x\in T$, and let $(T_n)_{n\in\mathbb{N}}$ be a sequence of $(G,\mathcal{F})$-trees that converges to $T$ in the equivariant Gromov--Hausdorff topology. A sequence $(x_n)_{n\in\mathbb{N}}\in \prod_{n\in\mathbb{N}}T_n$ is an \emph{approximation} of $x$ if the sequence $((T_n,x_n))_{n\in\mathbb{N}}$ of pointed $(G,\mathcal{F})$-trees converges to $(T,x)$. 

\begin{prop} (Horbez \cite[Theorem 4.3]{Hor14-1})\label{pointed-convergence}
Let $(T,u)$ (resp. $(T',u')$) be a pointed very small $(G,\mathcal{F})$-tree, and let $((T_n,u_n))_{n\in\mathbb{N}}$ (resp. $((T'_n,u'_n))_{n\in\mathbb{N}}$) be a sequence of pointed very small $(G,\mathcal{F})$-trees that converges to $(T,u)$ (resp. $(T',u')$) in the equivariant Gromov--Hausdorff topology. Assume that for all $n\in\mathbb{N}$, there exists a $1$-Lipschitz $G$-equivariant map $f_n:T_n\to T'_n$, such that $f_n(u_n)=u'_n$. Then there exists a $1$-Lipschitz $G$-equivariant map $f:T\to\overline{T'}$, such that $f(u)=u'$, where $\overline{T'}$ denotes the metric completion of $T'$.
\end{prop}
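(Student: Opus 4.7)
The plan is to construct $f$ on a countable dense subset of $T$ by a diagonal extraction from the maps $f_n$, and then extend by continuity using the $1$-Lipschitz bound. The value at $u$ is forced to be $u'$, and the $G$-equivariance of each $f_n$ (combined with the fact that $gu_n$ approximates $gu$ and $gu'_n$ approximates $gu'$ for every $g\in G$) forces $f(gu):=gu'$ on the whole orbit $Gu$. In this way the equivariance constraint is built in from the start.

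Fix a countable dense subset $D\subseteq T$ containing the orbit $Gu$, and for each $x\in D$ choose an approximation $x_n\in T_n$ so that $(T_n,u_n,x_n)\to(T,u,x)$ in the equivariant pointed Gromov--Hausdorff topology. The $1$-Lipschitz bound gives
\[
d_{T'_n}(u'_n,f_n(x_n))\le d_{T_n}(u_n,x_n)\longrightarrow d_T(u,x),
\]
so the points $f_n(x_n)$ remain at bounded distance from $u'_n$. A standard diagonal extraction along an enumeration of $D$ then yields a subsequence along which, for every $x\in D$, the pointed trees $(T'_n,u'_n,f_n(x_n))$ converge in the equivariant Gromov--Hausdorff topology; the limit point defines $f(x)\in\overline{T'}$. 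Passing the inequality $d_{T'_n}(f_n(x_n),f_n(y_n))\le d_{T_n}(x_n,y_n)$ and the equivariance identity $f_n(gx_n)=gf_n(x_n)$ to the limit shows that $f\colon D\to\overline{T'}$ is $1$-Lipschitz and $G$-equivariant.

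Finally, extend $f$ to all of $T$ by uniform continuity, using that $\overline{T'}$ is complete; the extension inherits the $1$-Lipschitz property and $G$-equivariance, and it sends $u$ to $u'$ by construction. The main obstacle is justifying that the limit of the bounded sequence $(f_n(x_n))_n$ genuinely lives in the completion $\overline{T'}$ of the target limit tree, rather than in some auxiliary pointed Gromov--Hausdorff limit which \emph{a priori} could contain extra points. This relies on a uniqueness statement for equivariant Gromov--Hausdorff limits of minimal $\mathbb{R}$-tree actions, together with a tree-specific argument showing that pointed bounded subsets pass to the limit inside the completion of the limit tree; this is precisely the content of the cited companion paper, which the present proposition invokes as a black box.
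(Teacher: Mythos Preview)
The paper does not prove this proposition: it is quoted verbatim as \cite[Theorem 4.3]{Hor14-1} and used as a black box, so there is no proof in the present paper to compare your proposal against. Your sketch is the standard route to results of this type --- diagonal extraction on a countable dense set using the $1$-Lipschitz bound to control images, then extension by uniform continuity --- and you have correctly isolated the only genuine difficulty, namely ensuring that the subsequential limits of $f_n(x_n)$ land in $\overline{T'}$ rather than in some abstract limit space. That step requires knowing that a minimal very small $(G,\mathcal{F})$-tree is determined (up to equivariant isometry) by its length function, so that the pointed Gromov--Hausdorff limit of $(T'_n,u'_n,f_n(x_n))$ really is $(T',u')$ together with a point of its completion; this is exactly what the companion paper supplies, as you note.
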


\subsection{Graphs of actions and transverse coverings}

Let $G$ be a countable group, and $\mathcal{F}$ be a free factor system of $G$. A \emph{$(G,\mathcal{F})$-graph of actions} consists of

\begin{itemize}
\item a marked graph of groups $\mathcal{G}$, whose fundamental group is isomorphic to $G$, such that all subgroups in $\mathcal{F}$ are conjugate into vertex groups of $\mathcal{G}$, together with an assignment of length (possibly equal to $0$) to each edge of $\mathcal{G}$, in such a way that every edge with peripheral stabilizer has length $0$, and 

\item an isometric action of every vertex group $G_v$ on a $G_v$-tree $T_v$ (possibly reduced to a point), in which all intersections of $G_v$ with peripheral subgroups of $G$ are elliptic and fix a unique point, and

\item a point $p_e\in T_{t(e)}$ fixed by $i_e(G_e)\subseteq G_{t(e)}$ for every oriented edge $e$.
\end{itemize}

A $(G,\mathcal{F})$-graph of actions is \emph{nontrivial} if the associated graph of groups is not reduced to a point. Associated to any $(G,\mathcal{F})$-graph of actions $\mathcal{G}$ is a $G$-tree $T(\mathcal{G})$. Informally, the tree $T(\mathcal{G})$ is obtained from the Bass--Serre tree $S$ of the underlying graph of groups by equivariantly attaching each vertex tree $T_v$ at the corresponding vertex $v$, an incoming edge being attached to $T_v$ at the prescribed attaching point, and making each edge from $S$ whose assigned length is equal to $d$ isometric to the segment $[0,d]$ (edges of length $0$ are collapsed). The reader is referred to \cite[Proposition 3.1]{Gui98} for a precise description of the tree $T(\mathcal{G})$. We say that a $(G,\mathcal{F})$-tree $T$ \emph{splits as a $(G,\mathcal{F})$-graph of actions} if there exists a $(G,\mathcal{F})$-graph of actions $\mathcal{G}$ such that $T=T({\mathcal{G}})$.

A \emph{transverse covering} of an $\mathbb{R}$-tree $T$ is a family $\mathcal{Y}$ of nondegenerate closed subtrees of $T$ such that every arc in $T$ is covered by finitely many subtrees in $\mathcal{Y}$, and for all $Y\neq Y'\in\mathcal{Y}$, the intersection $Y\cap Y'$ contains at most one point. It is \emph{trivial} if $\mathcal{Y}=\{T\}$, and \emph{nontrivial} otherwise. The \emph{skeleton} of $\mathcal{Y}$ is the simplicial tree $S$ defined as follows. The vertex set of $S$ is the set $\mathcal{Y}\cup V_0(S)$, where $V_0(S)$ is the set of all intersection points between distinct subtrees in $\mathcal{Y}$. There is an edge between $Y\in\mathcal{Y}$ and $y\in V_0(S)$ whenever $y\in Y$. 

\begin{prop} (Guirardel \cite[Lemma 1.5]{Gui08})\label{skeleton}
A $(G,\mathcal{F})$-tree splits as a nontrivial $(G,\mathcal{F})$-graph of actions if and only if it admits a nontrivial $G$-invariant transverse covering.
\end{prop}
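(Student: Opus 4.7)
The plan is to prove the two implications by dual constructions: one extracts a transverse covering from a given graph of actions, and the other rebuilds a graph of actions from the skeleton of a given covering.

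For the forward implication, I would assume $T = T(\mathcal{G})$ for a nontrivial $(G,\mathcal{F})$-graph of actions with underlying Bass--Serre tree $S$, and take $\mathcal{Y}$ to consist of all $G$-translates of the nondegenerate vertex trees $T_v\subset T$, together with all $G$-translates of the closed arcs of $T$ corresponding to edges of $S$ carrying positive length. $G$-invariance is immediate from the construction. Any arc in $T$ projects to a finite subtree of $S$, so it meets only finitely many vertex trees and edge-arcs, yielding the finiteness property required of a transverse covering. Distinct members of $\mathcal{Y}$ meet in at most a single attaching point, by the very description of how $T(\mathcal{G})$ is assembled from \cite[Proposition 3.1]{Gui98}. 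Nontriviality of $\mathcal{Y}$ follows from nontriviality of $\mathcal{G}$.

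For the converse implication, I would let $\mathcal{Y}$ be a nontrivial $G$-invariant transverse covering of $T$ with skeleton $S$, and first verify that $S$ is a tree. Connectedness comes from the fact that an arc $[x,x']\subset T$, with $x\in Y$ and $x'\in Y'$, is covered by a finite chain $Y=Y_0,Y_1,\dots,Y_n=Y'$ of members of $\mathcal{Y}$ with $Y_i\cap Y_{i+1}$ a single point, giving a path from $Y$ to $Y'$ in $S$; acyclicity follows because any cycle would force two distinct members of $\mathcal{Y}$ to meet in more than one point. The $G$-action on $T$ preserves $\mathcal{Y}$ and $V_0(S)$, hence induces a simplicial $G$-action on $S$, and passing to the quotient yields a graph of groups $\mathcal{G}$. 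Each vertex $Y\in\mathcal{Y}$ of $S$ gives a vertex tree action $G_Y\curvearrowright Y$, each vertex $y\in V_0(S)$ gives a trivial action on a point, and each oriented edge $(Y,y)$ is equipped with the attaching point $y\in Y$. Because $T$ is a $(G,\mathcal{F})$-tree, every peripheral subgroup fixes a unique point of $T$, which lies in some $Y\in\mathcal{Y}$ or coincides with a vertex of $V_0(S)$; this yields the required ellipticity of peripheral intersections in each vertex tree.

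Finally, I would assign length $0$ to every edge of $\mathcal{G}$ and show $T=T(\mathcal{G})$ by constructing the natural $G$-equivariant map $\varphi:T(\mathcal{G})\to T$ that sends each vertex tree isometrically onto the corresponding $Y\in\mathcal{Y}$ while identifying attaching points with their images in $T$, and then checking $\varphi$ is a $G$-equivariant isometry. Surjectivity of $\varphi$ is exactly the covering property, and injectivity follows from the fact that distinct $Y$'s meet in at most one point. I expect this last verification to be the main obstacle: one has to show that an arbitrary arc of $T$ decomposes as the concatenation of sub-arcs lying inside successive members of $\mathcal{Y}$ matching the path structure in the skeleton, so that distances in $T$ agree with the natural metric inherited from the graph-of-actions construction. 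The finiteness condition in the definition of a transverse covering, combined with the tree structure of $S$ established above, is exactly what makes this decomposition possible, and it also ensures that there is no ambiguity or overlap in how the vertex trees are glued through the skeleton.
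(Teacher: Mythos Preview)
The paper does not give its own proof of this proposition: it is stated with a citation to Guirardel \cite[Lemma 1.5]{Gui08} and used as a black box. So there is no proof in the paper to compare your proposal against. Your outline follows the standard construction (which is essentially Guirardel's): extract a transverse covering from the vertex trees and positive-length edge arcs of a graph of actions, and conversely rebuild a graph of actions from the skeleton of a transverse covering with length-$0$ edges.

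Two small points on your sketch. First, your acyclicity argument for the skeleton $S$ is not quite right as stated: a cycle $Y_0,y_0,Y_1,y_1,\dots,Y_n=Y_0$ in $S$ does not immediately force two members of $\mathcal{Y}$ to share two points. The correct argument uses that the $Y_i$ are closed subtrees of the $\mathbb{R}$-tree $T$ meeting only in single points: following the cycle gives an injective path in $T$ from $y_0$ back to itself through distinct $y_i$'s, contradicting that $T$ is a tree. Second, the claim that nontriviality of $\mathcal{G}$ implies nontriviality of $\mathcal{Y}$ deserves a sentence: if $\mathcal{Y}=\{T\}$ then all translates of all nondegenerate vertex trees coincide with $T$ and all edges have length $0$, which forces the underlying graph of groups to collapse to a point. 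With these adjustments your argument is correct and matches the approach in \cite{Gui08}.
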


\section{Closedness of the space of projective very small $(G,\mathcal{F})$-trees}\label{sec-compactness}

We denote by $VSL(G,\mathcal{F})$ the space of projective very small, minimal, nontrivial $(G,\mathcal{F})$-trees, as defined in Section \ref{sec-outer-space}. The goal of the present section is to establish the following fact. 

\begin{prop}\label{very-small}
The space $VSL(G,\mathcal{F})$ is closed in $\mathbb{PR}^G$.
\end{prop}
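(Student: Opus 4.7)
The plan is as follows. Pick a sequence $[T_n]\in VSL(G,\mathcal{F})$ converging projectively to some $[T]\in\mathbb{PR}^G$, and after rescaling assume $T_n\to T$ unprojectively in $\mathbb{R}^G$. By Paulin's theorem, axes-convergence coincides with equivariant Gromov--Hausdorff convergence, so finite subsets of $T$ can be approximated by finite subsets of $T_n$ with controlled equivariant distances. One must then verify that $T$ satisfies all the defining properties: it is a $(G,\mathcal{F})$-tree (each peripheral subgroup fixes a unique point), and its nontrivial arc stabilizers are cyclic, non-peripheral, closed under roots, while its tripod stabilizers are trivial.

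The core tool is the following approximation principle. If $g\in G$ fixes a non-degenerate arc $[a,b]\subset T$, then by Gromov--Hausdorff convergence one can choose $a_n,b_n\in T_n$ with $d_{T_n}(a_n,b_n)\to d_T(a,b)$ and $d_{T_n}(ga_n,a_n),\ d_{T_n}(gb_n,b_n)\to 0$. Since each $T_n$ is an $\mathbb{R}$-tree, this forces $\|g\|_{T_n}\to 0$ and, for $n$ sufficiently large, $g$ must fix a long sub-arc of $[a_n,b_n]$ in $T_n$ (unless it acts as an inversion near the midpoint, a case to be handled by passing to $g^2$). The same principle extends to several elements sharing a common fixed arc and to tripod configurations obtained by approximating three endpoints together with the tripod's center.

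Each condition is then checked by contradiction, pushing a hypothetical violation in $T$ back to $T_n$ for large $n$. If a nontrivial peripheral $g$ fixes an arc of $T$, the approximation yields $g$ fixing an arc in some $T_n$, contradicting non-peripherality of arc stabilizers in $T_n$. If non-commuting $g,h$ share a common fixed arc in $T$, the joint approximation reproduces the same configuration in some $T_n$, contradicting the cyclic arc-stabilizer condition there. If $g\neq 1$ fixes a non-degenerate tripod in $T$, approximating its three extremities and center produces a fixed tripod of $g$ in $T_n$, contradicting tripod-triviality. If $g^k$ fixes an arc of $T$ while $g$ does not, the approximation transfers the fixed arc of $g^k$ to $T_n$; root-closure in $T_n$ forces $g$ to fix that arc; passing to the limit then yields a fixed arc of $g$ in $T$, a contradiction. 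Uniqueness of the point fixed by a peripheral subgroup of $T$ is obtained analogously, from the corresponding uniqueness in each $T_n$.

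The main obstacle I anticipate is the interplay between approximation and torsion: an element $g$ lying in some $G_i$ may have finite order, so an almost-fixed arc in $T_n$ could a priori arise from $g$ acting as an inversion about a central point rather than from genuine fixation. Converting approximate fixation into true fixation therefore requires either passing to a suitable power of $g$ or tracking centers of inversion carefully, and the same issue reappears in the root-closure step, where one needs the fixed sub-arc of $g$ extracted in each $T_n$ to have length bounded below uniformly in $n$ so that a genuine fixed arc survives in the limit.
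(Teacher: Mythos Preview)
Your overall strategy---work in the equivariant Gromov--Hausdorff topology and push each violation back to $T_n$---is exactly right, but your ``core tool'' is overstated and this creates a genuine gap. From $d_{T_n}(ga_n,a_n)\to 0$ and $d_{T_n}(gb_n,b_n)\to 0$ you can conclude that $\|g\|_{T_n}\to 0$ and that $a_n,b_n$ are close to the characteristic set $C_{T_n}(g)$, but \emph{not} that $g$ fixes a subarc of $[a_n,b_n]$. The element $g$ may be hyperbolic in $T_n$ for every $n$, with translation length tending to zero; in that case $g$ fixes nothing in $T_n$, and no ``inversion'' subtlety is involved. This possibility breaks three of your four reductions:
\begin{itemize}
\item For cyclicity: if $g,h$ are both hyperbolic in $T_n$, there is no common fixed arc to invoke. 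The paper instead observes that their axes overlap on a segment of length at least $3\epsilon$ while $\|g\|_{T_n},\|h\|_{T_n}\le\epsilon$, so the commutators $[g,h]$, $g[g,h]g^{-1}$, $h[g,h]h^{-1}$ genuinely fix a common arc in $T_n$. Very-smallness makes the group they generate cyclic, with $[g,h]$ nonperipheral; then in a Grushko $(G,\mathcal{F})$-tree both $g$ and $h$ preserve the axis of $[g,h]$, forcing $g$ and $h$ to commute.
\item For root-closure: if $g^k$ is hyperbolic in $T_n$, there is no fixed arc of $g^k$ to transfer. The fix is direct: $\|g^k\|_{T_n}\to 0$ gives $\|g\|_{T_n}\to 0$, and $a_n,b_n$ close to the axis of $g$ gives $d_{T_n}(ga_n,a_n),d_{T_n}(gb_n,b_n)\to 0$, whence $g$ fixes $[a,b]$ in $T$. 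The elliptic case is handled separately as you sketched.
\item For tripods: you cannot produce a fixed tripod of $g$ in $T_n$. What survives is that $a_n,b_n,c_n$ are all $\epsilon$-close to $C_{T_n}(g)$, which (since $T_n$ is very small) is either a point, an arc, or a line---in any case contains no tripod. This forces the center of the triangle $a_n,b_n,c_n$ to lie within $\epsilon$ of one of the three vertices, contradicting the lower bound on the branch lengths.
\end{itemize}
Your argument for non-peripherality of arc stabilizers is fine, since peripheral elements are elliptic in every $(G,\mathcal{F})$-tree, so the hyperbolic case does not arise there. The inversion/torsion worry you flagged is a red herring; the real dichotomy to track throughout is elliptic versus hyperbolic in $T_n$.
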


In other words, every limit point of a sequence of very small $(G,\mathcal{F})$-trees is very small. This was proved by Cohen--Lustig for actions of finitely generated groups on $\mathbb{R}$-trees \cite[Theorem I]{CL95}: by working in the axes topology, they proved closedness of the conditions that nontrivial arc stabilizers are cyclic and root-closed, and that tripod stabilizers are trivial. We will provide a shorter proof of these facts by working in the equivariant Gromov--Hausdorff topology. A $(G,\mathcal{F})$-tree $T$ is \emph{irreducible} if no end of $T$ is fixed by all elements of $G$. The Gromov--Hausdorff topology is equivalent to the axes topology on the space of minimal, irreducible $(G,\mathcal{F})$-trees \cite{Pau89}. Since limits of trees in $\mathcal{O}(G,\mathcal{F})$ are irreducible, we can carry our arguments in the equivariant Gromov--Hausdorff topology. A proof of the fact that being small is a closed condition (in the equivariant Gromov--Hausdorff topology) also appears in \cite[Lemme 5.7]{Pau88}. In our setting, we also need to check closedness of the condition that arc stabilizers are not peripheral. We will make use of classical theory of group actions on $\mathbb{R}$-trees, and refer the reader to \cite{CM87} for an introduction to this theory.

\begin{lemma}\label{lemma-limit-good}
Let $T$ be a minimal $(G,\mathcal{F})$-tree, and let $(T_n)_{n\in\mathbb{N}}$ be a sequence of minimal $(G,\mathcal{F})$-trees that converges (non-projectively) to $T$. Let $g\in G$ be an element that fixes a nondegenerate arc in $T$. Then for all $n\in\mathbb{N}$ sufficiently large, either $g$ fixes a nondegenerate arc in $T_n$, or $g$ is hyperbolic in $T_n$.
\end{lemma}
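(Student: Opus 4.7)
The strategy is to argue by contradiction. Every isometry of an $\mathbb{R}$-tree is either elliptic or hyperbolic (cf.\ \cite{CM87}), so it suffices to show that, for $n$ sufficiently large, the element $g$ cannot be elliptic in $T_n$ with fixed-point set reduced to a single point. Suppose on the contrary that, after passing to a subsequence, $\mathrm{Fix}_{T_n}(g) = \{p_n\}$ for every $n$. Fix two distinct points $a, b$ in an arc of $T$ fixed pointwise by $g$, and set $2\epsilon := d_T(a,b) > 0$. Applying the definition of equivariant Gromov--Hausdorff convergence $T_n \to T$ to the finite set $K := \{a,b\} \subset T$ and to $P := \{1, g\} \subset G$, for any $\delta > 0$ and all $n$ large enough one obtains $a_n, b_n \in T_n$ satisfying
\[
d_{T_n}(a_n, g a_n) < \delta, \qquad d_{T_n}(b_n, g b_n) < \delta, \qquad d_{T_n}(a_n, b_n) > 2\epsilon - \delta,
\]
using that $d_T(a, g a) = d_T(b, g b) = 0$ and $d_T(a, b) = 2\epsilon$.

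The core input is the following tree-geometric fact: if $g$ is an isometry of an $\mathbb{R}$-tree whose fixed-point set equals a single point $\{p\}$, then $d(x, g x) = 2\, d(x, p)$ for every $x$. To prove it, let $y$ be the center of the tripod on $p, x, g x$; the identity $d(p, x) = d(p, g x)$ together with $[x, g x] = [x, y] \cup [y, g x]$ yields $d(x, g x) = 2 d(x, p) - 2 d(y, p)$. Since $g$ is an isometry fixing $p$ and mapping $[p, x]$ onto $[p, g x]$, it sends the initial segment $[p, y] \subset [p, x]$ of length $d(p, y)$ onto the initial segment of $[p, g x]$ of the same length, which is again $[p, y]$. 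An isometry of a segment fixing one of its endpoints is the identity, so $g$ fixes $[p, y]$ pointwise, and the single-fixed-point assumption forces $y = p$.

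Combining these inputs closes the argument: in the (hypothetical) subsequence, the formula applied to $a_n$ and $b_n$ gives $d_{T_n}(a_n, p_n), d_{T_n}(b_n, p_n) < \delta / 2$, whence $d_{T_n}(a_n, b_n) < \delta$, contradicting the lower bound $d_{T_n}(a_n, b_n) > 2\epsilon - \delta$ as soon as $\delta < \epsilon$. I expect the only delicate point to be the tripod computation; the rest is a routine use of the equivariant Gromov--Hausdorff approximation scheme recalled in Section \ref{sec-Lipschitz}.
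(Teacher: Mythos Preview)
Your proof is correct and follows essentially the same route as the paper's: both argue by contradiction, pass to a subsequence where $g$ has a single fixed point $p_n$ in $T_n$, pick approximations $a_n,b_n$ of two points in the fixed arc, and use that small displacement by $g$ forces $a_n,b_n$ to be close to $p_n$, contradicting $d_{T_n}(a_n,b_n)\to d_T(a,b)>0$. The only difference is that you spell out and prove the identity $d(x,gx)=2\,d(x,p)$ for an elliptic isometry with a single fixed point, whereas the paper invokes this standard fact tacitly.
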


\begin{proof}
Otherwise, up to passing to a subsequence, we can assume that for all $n\in\mathbb{N}$, the element $g$ fixes a single point $x_n$ in $T_n$. Let $[a,b]$ be a nondegenerate arc fixed by $g$ in $T$. Let $a_n$ (resp. $b_n$) be an approximation of $a$ (resp. $b$) in the tree $T_n$. As $d_{T_n}(a_n,ga_n)$ and $d_{T_n}(b_n,gb_n)$ both tend to $0$, the points $a_n$ and $b_n$ are both arbitrarily close to $x_n$. Therefore, the distance $d_{T_n}(a_n,b_n)$ converges to $0$, and $a=b$, a contradiction.
\end{proof}

\begin{lemma}\label{lemma-good-2}
Let $T$ be a minimal $(G,\mathcal{F})$-tree, and let $(T_n)_{n\in\mathbb{N}}$ be a sequence of minimal $(G,\mathcal{F})$-trees that converges to $T$. Let $g\in G$. Assume that some power $g^k$ of $g$ fixes a nondegenerate arc $I$ in $T$. If for all sufficiently large $n\in\mathbb{N}$, the element $g$ is hyperbolic in $T_n$, then $g$ fixes $I$. 
\end{lemma}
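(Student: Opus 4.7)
The plan is to show that $g$ fixes both endpoints of $I$ by controlling where the approximating axes sit in $T_n$. Write $I=[a,b]$, and let $a_n,b_n\in T_n$ be approximations of $a,b$ in the sense of Section \ref{sec-Lipschitz}.

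First I would reduce to the case where $g$ itself is elliptic in $T$. Since $g^k$ fixes a nondegenerate arc, $\|g^k\|_T=0$, and since translation length satisfies $\|g^k\|_T=|k|\cdot\|g\|_T$, we get $\|g\|_T=0$. By continuity of translation lengths under the equivariant Gromov--Hausdorff convergence (or equivalently in the axes topology, since we may as well apply the argument to $g$, which is hyperbolic in each $T_n$, so the $T_n$'s are irreducible), we deduce $\|g\|_{T_n}\to 0$, and hence also $\|g^k\|_{T_n}=k\|g\|_{T_n}\to 0$.

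Next I would use the standard formula for the displacement of a hyperbolic isometry of an $\mathbb{R}$-tree. Let $A_n\subseteq T_n$ be the axis of $g$ in $T_n$; for every $x\in T_n$ and every nonzero integer $m$ one has
\[
d_{T_n}(x,g^m x)=|m|\cdot \|g\|_{T_n}+2\, d_{T_n}(x,A_n).
\]
Apply this with $m=k$ at $x=a_n$: since $(T_n,a_n)\to (T,a)$ as pointed $(G,\mathcal{F})$-trees, we have $d_{T_n}(a_n,g^k a_n)\to d_T(a,g^k a)=0$, so together with $k\|g\|_{T_n}\to 0$ this forces $d_{T_n}(a_n,A_n)\to 0$.

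Finally, applying the same formula with $m=1$ yields
\[
d_{T_n}(a_n,ga_n)=\|g\|_{T_n}+2\,d_{T_n}(a_n,A_n)\longrightarrow 0,
\]
and passing to the limit gives $d_T(a,ga)=0$, so $g$ fixes $a$. The symmetric argument with $b_n$ in place of $a_n$ shows that $g$ fixes $b$. An isometry of an $\mathbb{R}$-tree that fixes two distinct points fixes the whole segment between them, so $g$ fixes $I$. The only slightly delicate point is the justification that $a_n$ and $b_n$ lie asymptotically on the axis $A_n$, but this follows directly from the hyperbolic displacement formula together with the vanishing of $\|g\|_{T_n}$ and of $d_{T_n}(a_n,g^k a_n)$, so no serious obstacle arises.
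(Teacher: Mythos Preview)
Your proof is correct and follows essentially the same route as the paper's: approximate the endpoints $a,b$ by $a_n,b_n$, use that $d_{T_n}(a_n,g^ka_n)\to 0$ together with the hyperbolic displacement formula to force both $\|g\|_{T_n}\to 0$ and $d_{T_n}(a_n,A_n)\to 0$, and then conclude $d_{T_n}(a_n,ga_n)\to 0$. The only cosmetic difference is that you first derive $\|g\|_{T_n}\to 0$ from convergence of length functions and then deduce closeness to the axis, whereas the paper reads off both facts simultaneously from the single equation $d_{T_n}(a_n,g^ka_n)=k\|g\|_{T_n}+2d_{T_n}(a_n,A_n)$; one small caution is that your parenthetical justification of irreducibility of $T_n$ (from the existence of a single hyperbolic element) is not quite right, but it is also unnecessary for the argument.
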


\begin{proof}
Let $I:=[a,b]$. Let $a_n$ (resp. $b_n$) be an approximation of $a$ (resp. $b$) in $T_n$. Since $d_{T_n}(a_n,g^ka_n)$ and $d_{T_n}(b_n,g^kb_n)$ both converge to $0$, the points $a_n$ and $b_n$ are arbitrarily close to the axis of $g$ in $T_n$, and $||g||_{T_n}$ converges to $0$. Hence both $d_{T_n}(a_n,ga_n)$ and $d_{T_n}(b_n,gb_n)$ converge to $0$, so $g$ fixes $[a,b]$.
\end{proof}

\begin{proof}[Proof of Proposition \ref{very-small}]
Let $(T_n)_{n\in\mathbb{N}}$ be a sequence of very small $(G,\mathcal{F})$-trees that converges to a $(G,\mathcal{F})$-tree $T$. Let $g\in G$ be a peripheral element. Then for all $n\in\mathbb{N}$, the element $g$ fixes a single point in $T_n$. Lemma \ref{lemma-limit-good} implies that $g$ fixes a single point in $T$.
\\
\\
\indent Let now $g,h\in G$ be two elements that fix a common nondegenerate arc $[a,b]\subseteq T$. We will show that the group $\langle g,h\rangle$ is abelian, and hence cyclic because $g$ and $h$ are nonperipheral. Let $a_n$ (resp. $b_n$) be an approximation of $a$ (resp. $b$) in $T_n$. Let $\epsilon>0$, chosen to be small compared to $d_T(a,b)$. Since $d_{T_n}(a_n,ga_n),d_{T_n}(b_n,gb_n)\le\epsilon$ for $n$ large enough, while $d_{T_n}(a_n,b_n)\ge d_T(a,b)-\epsilon$, the elements $g$ and $h$ are hyperbolic in $T_n$, and their translation axes have an overlap of length greater than $3\epsilon$. On the other hand, we have $||g||_{T_n},||h||_{T_n}\le\epsilon$. This implies that the elements $[g,h]$, $g[g,h]g^{-1}$ and $h[g,h]h^{-1}$ all fix a common nondegenerate arc in $T_n$. As $T_n$ is very small, the group generated by these elements is (at most) cyclic, and in addition $[g,h]$ is nonperipheral. This implies that $[g,h]$ is hyperbolic in any Grushko $(G,\mathcal{F})$-tree. Both $g$ and $h$ preserve the axis of $[g,h]$ in a Grushko $(G,\mathcal{F})$-tree, and hence $g$ and $h$ commute. 
\\
\\
\indent Let now $g\in G$ be an element, one of whose proper powers $g^k$ fixes a nondegenerate arc $[a,b]\subseteq T$. 

We first assume that $g$ fixes a nondegenerate arc in $T_n$ for all $n\in\mathbb{N}$, and let $I_n$ denote the fixed point set of $g$ in $T_n$. Since $T_n$ is very small, the element $g$ also fixes $I_n$ for all $n\in\mathbb{N}$. Let $a_n$ (resp. $b_n$) be an approximation of $a$ (resp. $b$) in $T_n$. Since $d_{T_n}(g^ka_n,a_n)$ and $d_{T_n}(g^kb_n,b_n)$ both converge to $0$, the arc $I_n$ comes arbitrarily close to both $a_n$ and $b_n$. This implies that both $d_{T_n}(ga_n,a_n)$ and $d_{T_n}(gb_n,b_n)$ converge to $0$, and therefore $g$ fixes $[a,b]$ in $T$. 

Otherwise, up to passing to a subsequence, we can assume that $g^k$, and hence $g$, is hyperbolic in $T_n$ for all $n\in\mathbb{N}$. It then follows from Lemma \ref{lemma-good-2} that $g$ fixes $[a,b]$. 
\\
\\
\indent We finally assume that $g$ fixes a nondegenerate tripod in $T$, whose extremities we denote by $a,b$ and $c$. Let $m$ be the center of this tripod, and $L>0$ be the shortest distance in $T$ between $m$ and one of the points $a$, $b$ or $c$. Let $a_n$ (resp. $b_n,c_n,m_n$) be an approximation of $a$ (resp. $b,c,m$) in $T_n$, and let $\epsilon>0$ be such that $\epsilon<\frac{L}{2}$. For $n$ sufficiently large, the point $m_n$ lies at distance at most $\epsilon$ from the center $m'_n$ of the tripod formed by $a_n$, $b_n$ and $c_n$ in $T_n$. In addition, as $a_n$, $b_n$ and $c_n$ all lie at distance at most $\epsilon$ from $C_{T_n}(g)$, the distance from $m'_n$ to one of the points $a_n$, $b_n$ or $c_n$ is at most $\epsilon$. This leads to a contradiction.
\end{proof}

\section{Geometric $(G,\mathcal{F})$-trees}\label{sec-geom}

In the present section, we introduce the class of \emph{geometric $(G,\mathcal{F})$-trees}, that will be an important tool in our analysis of the closure of $\mathcal{O}(G,\mathcal{F})$. In particular, we will explain how to approximate any very small $(G,\mathcal{F})$-tree by geometric ones. Our presentation was largely inspired by Gaboriau--Levitt's work in the context of free groups \cite{GL95}. The idea of using band complexes and systems of isometries for studying trees dates back to the work of Rips. 

\subsection{Approximations by geometric $(G,\mathcal{F})$-trees} \label{sec-pg-tree}

Given a tree $T$, we define a \emph{finite tree} in $T$ as the convex hull of a finite collection of points in $T$. A \emph{finite forest} of $T$ is a subset of $T$ having finitely many connected components, each of which is a finite tree.

Let $B:=\{g_1,\dots,g_N\}$ be a fixed free basis of $F_N$, and let $S$ be the standard $(G,\mathcal{F})$-free splitting represented on Figure \ref{fig-Grushko-rose}. Let $T$ be a $(G,\mathcal{F})$-tree, and let $K\subseteq T$ be a finite forest. Let $v_0\in S$ be a vertex with trivial stabilizer, and let $v_1,\dots,v_k$ be the adjacent vertices with stabilizers $G_1,\dots,G_k$. Notice that the set $\{v_0,\dots,v_k\}$ is a set of representatives of all $G$-orbits of vertices in $S$. 

Given a vertex $v\in V(S)$ in the orbit of $v_i$ (for some $i\in\{0,\dots,k\}$), we let $K_v$ be the union, taken over all elements $g\in G$ such that $gv_i=v$, of the translates $gK$. Notice in particular that for all $i\in\{1,\dots,k\}$, we have $K_{v_i}=G_i.K$, and in general the forest $K_v$ is $\text{Stab}(v)$-invariant. Let $$X:=\coprod_{v\in V(S)}K_v.$$ Let $\Sigma$ be the foliated complex obtained from $X$ by adding, for each pair $\{v,v'\}$ of adjacent vertices in $S$, a band $K_v\cap K_{v'}\times [0,1]$, joining the two copies of $K_v\cap K_{v'}$ sitting in $K_v$ and in $K_{v'}$. These bands are foliated by the vertical sets of the form $\{x\}\times [0,1]$. 

Notice that $\Sigma$ can also be viewed as the subset of $T\times S$ made of all couples of the form $(x,y)$, where either $y$ is a vertex of $S$ and $x\in K_y$, or else $y$ belong to the interior of an edge joining two vertices $v$ and $v'$ in $S$, and $x\in K_v\cap K_{v'}$. The diagonal action of $G$ on $T\times S$ then restricts to a $G$-action on $\Sigma$. The projections to $T$ and $S$ define natural $G$-equivariant maps $\pi_T:\Sigma\to T$ and $\pi_S:\Sigma\to S$. We will let $\Sigma_0:=\pi_S^{-1}(G.v_0)$

We now assume that the set $K\subseteq T$ is a finite tree, i.e. $K$ is connected, that $K\cap gK\neq\emptyset$ for all $g\in B$, and that for all $i\in\{1,\dots,k\}$, the tree $K$ contains the unique point $x_i$ which is fixed by $G_i$. In this way, all subsets $K_v$ with $v\in V(S)$ are connected. Then $\Sigma$ projects onto the tree $S$, and all fibers of the projection map $\pi_S$ are trees, which implies that $\Sigma$ is contractible. It follows from \cite[Proposition 1.7]{LP97} that the leaf space of $\Sigma$ made Hausdorff is an $\mathbb{R}$-tree $T_K$. Notice by construction that for all vertices $v_i$ with nontrivial stabilizer in $S$, the tree $K_{v_i}$ is equal to the union of all trees $K_v$ with $v$ adjacent to $v_i$. Therefore, every point in $T_K$ has a representative in $\Sigma_0$.

We will now establish a few properties of $T_K$, thus extending a theorem of Gaboriau and Levitt \cite[Theorem I.1]{GL95} to the context of $(G,\mathcal{F})$-trees. Given two $\mathbb{R}$-trees $T$ and $T'$, a \emph{morphism} $f:T\to T'$ is a map such that every segment in $T$ can be subdivided into finitely many subsegments, in restriction to which $f$ is an isometry.

\begin{theo} \label{pseudogroup-tree}
Let $T$ be a $(G,\mathcal{F})$-tree, and let $K\subseteq T$ be a finite tree that contains the fixed point of $G_i$ for all $i\in\{1,\dots,k\}$, and such that $K\cap gK\neq\emptyset$ for all $g\in B$. Then $T_K$ is the unique $(G,\mathcal{F})$-tree such that
\begin{enumerate}
\item the finite tree $K$ embeds isometrically into $T_{K}$, and 
\item for all $g\in G$ and all $x,y\in K$, if $g.x=y\in T$ and the points $(y,v_0)$ and $(y,gv_0)$ belong to the same leaf of $\Sigma$, then $g.x=y$ in $T_K$, and 
\item every segment of $T_{K}$ is contained in a finite union of translates $g.K$ with $g\in G$, and
\item if $T'$ is any $(G,\mathcal{F})$-tree satisfying the first two above properties, then there exists a unique $G$-equivariant morphism $j:T_{K}\to T'$ such that $j(x)=x$ for all $x\in K$.
\end{enumerate}
\end{theo}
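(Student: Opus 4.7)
The plan is to take for $T_K$ the Hausdorffified leaf space of the foliated complex $\Sigma$ already constructed, and to verify the four listed properties for this tree; uniqueness of such a tree will then follow formally from the universal property (4). For property (1), the key point is that the restriction of the leaf-projection $\Sigma\to T_K$ to the fiber $\pi_S^{-1}(v_0)=K$ is an isometric embedding. I would exploit the fact that the horizontal projection $\pi_T:\Sigma\to T$ descends to a well-defined $G$-equivariant map $T_K\to T$ (because leaves of $\Sigma$ project to points of $T$ by construction, the action of $G$ on $T$ being isometric), which restricted to $K$ is the identity; since $K\hookrightarrow T$ is isometric, so is $K\hookrightarrow T_K$. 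Property (2) is essentially built into the construction of $\Sigma$: by $G$-equivariance, the hypothesis that $(y,v_0)$ and $(y,gv_0)$ lie in the same leaf translates to $(g^{-1}y,v_0)$ and $(x,v_0)$ being $g$-related inside one and the same leaf, so $y$ and $g.x$ are identified in $T_K$.

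Property (3) is the main technical step and the place I expect to spend most effort. Every point of $T_K$ has, by the construction, a representative in $\Sigma_0=\pi_S^{-1}(G.v_0)$, hence lies in some translate $g.K$. For a segment $[p,q]\subset T_K$, I would lift it to a path in $\Sigma$ between chosen representatives of $p$ and $q$ and track how this path crosses bands of $\Sigma$. Each band crossing projects to an edge crossing in the Bass--Serre tree $S$, and the leaf-space construction of Levitt--Paulin on which the $\mathbb{R}$-tree structure of $T_K$ relies (via \cite[Proposition 1.7]{LP97}) ensures that the lifted path is a finite concatenation of horizontal fiber segments joined by band crossings. Between consecutive crossings the path stays inside a single fiber $K_v$, and since each such $K_v$ is a finite union of translates of $K$ (by the definition of $K_v$ together with the assumption that $K$ contains the fixed point of each $G_i$, so that $K_{v_i}=G_i.K$), the segment $[p,q]$ is contained in finitely many translates $g.K$.

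For property (4), given a tree $T'$ satisfying (1) and (2), I would define $j$ on each translate $g.K\subset T_K$ by $j(g.x):=g.x$ read in $T'$ (using the isometric embedding from (1) applied to $T'$); condition (2) precisely guarantees well-definedness on overlaps $g.K\cap g'.K$, and $G$-equivariance is automatic. Using (3), an arbitrary segment of $T_K$ is covered by finitely many $g.K$'s on each of which $j$ is an isometry, which provides the finite subdivision required of a morphism. Uniqueness of $j$ follows because any candidate must agree with this prescription on $K$ and then be forced by $G$-equivariance on every $g.K$, and the translates $g.K$ cover $T_K$ by (3). Uniqueness of $T_K$ as a tree satisfying (1)--(4) is then formal: between any two such trees the mutual morphisms provided by (4) compose to $G$-equivariant morphisms fixing $K$ pointwise, hence to the identity by the uniqueness clause of (4), so they are inverse isometries.
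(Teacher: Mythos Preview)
Your overall architecture matches the paper's, and your argument for Property~(1) is actually slicker than the paper's: you observe that $\pi_T$ descends to a $1$-Lipschitz map $T_K\to T$ restricting to the identity on $K$, which forces $K\hookrightarrow T_K$ to be isometric. The paper instead deduces~(1) from a technical lemma: the infimum defining the pseudo-metric $\delta$ is \emph{achieved}, and a minimizing sequence has $\pi_S$-images aligned without backtracking along the segment in $S$. (A small omission in your version: you should say why $\pi_T$ descends to the \emph{Hausdorffified} leaf space, not just the leaf space; this follows from $d_T(\pi_T(\cdot),\pi_T(\cdot))\le\delta$.)

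The real gap is that this infimum-achieved lemma is still the engine behind Properties~(3) and~(4), and you have not supplied it. For~(3), your assertion that a segment in $T_K$ lifts to a \emph{finite} concatenation of horizontal pieces and band crossings is precisely that lemma; attributing it to \cite[Proposition~1.7]{LP97} is optimistic, and the paper proves it by hand (remove backtracking in $S$, then observe that at each step one may replace $u_{i+1}$ by the projection of $u_i$ to the relevant band base, reducing to finitely many band-extremal points). Your side remark that ``each $K_v$ is a finite union of translates of $K$'' is also false when $G_i$ is infinite; one must instead use that every point of $T_K$ has a representative in $\Sigma_0=\pi_S^{-1}(G.v_0)$. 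For~(4), your well-definedness check on overlaps $g.K\cap g'.K$ needs the implication ``$\delta=0\Rightarrow$ same leaf of $\Sigma$'', which is again the infimum-achieved lemma; Property~(2) alone only gives the converse direction. The paper avoids this by proving $\delta'\le\delta$ directly (with $\delta'$ built from $d_{T'}$), which yields the $1$-Lipschitz map $T_K\to T'$ without a piecewise definition.

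In short: your shortcut for~(1) is genuine, but the finiteness/achievement argument you skipped is not disposable---it is doing the work in~(3) and in the well-definedness step of~(4).
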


\begin{proof}
The proof of Theorem \ref{pseudogroup-tree} is the same as the proof of \cite[Theorem I.1]{GL95}, we will sketch a proof for completeness. Uniqueness follows from the fourth property: if $T'$ is another $(G,\mathcal{F})$-tree with the same properties, then there are $G$-equivariant morphisms from $T_K$ to $T'$ and from $T'$ to $T_K$. Since morphisms can only decrease translation length functions, this implies that $T_K$ and $T'$ are $G$-equivariantly isometric. 

The tree $T_K$ is obtained in the following way. Given any two points $x,y\in\Sigma_0$, we let $$\delta(x,y):=\inf_{\sigma} d_T(x,u_1)+d_T(u'_1,u_2)\dots+d_T(u'_{k-1},y),$$ where the infimum is taken over the set of all sequences $$\sigma=(x=u'_0,u_1,u'_1,u_2,u'_2,\dots,u_{k-1},u'_{k-1},u_k=y)$$ of points in $\Sigma$, such that for all $i\in\{0,\dots,k-1\}$, the segment $[u'_i,u_{i+1}]$ is horizontal (i.e. its projection to $S$ is constant), and for all $i\in\{1,\dots,k\}$, the segment $[u_i,u'_i]$ is vertical (i.e. its projection to $T$ is constant), and hence $[u'_i,u_{i+1}]$ is contained in a leaf of $\Sigma$. Then $\delta$ defines a pseudo-metric on $\Sigma_0$, and $T_K$ is the metric space $(\Sigma_0,\delta)$ made Hausdorff. 

Property $2$ in Theorem \ref{pseudogroup-tree} easily follows from the construction, and Property $3$ follows from Properties $1$ and $2$. Property $4$ is obtained by noticing that if $T'$ is a tree satisfying the first two conclusions of the theorem, then if we let $\delta'((x,gv_0),(x',g'v_0)):=d_{T'}(gx,g'x')$ for all $x,x'\in K$ and all $g,g'\in G$, then we have $\delta'\le\delta$. Then using the fact that every segment in $T'$ is covered by finitely many $G$-translates of $K$, we get that the identity map on $K\times\Sigma_0$ induces the required morphism from $T'$ to $T$. We refer to the proof of \cite[Theorem I.1]{GL95} for details of the arguments.

The key point for establishing Property $1$ will be to prove that if $\delta(x,y)=0$, then $x$ and $y$ belong to the same leaf of the foliation. We will prove more generally that the infimum in the definition of $\delta$ is achieved, and that the $\pi_S$-images of the points in the minimizing sequence are aligned in the right order in $S$. Property $1$ in Theorem \ref{pseudogroup-tree} will follow from this fact: if $x,y\in K$, then any minimizing sequence has to be contained in $K$, and therefore $\delta(x,y)=d_T(x,y)$. 

We will now prove that the infimum in the definition of $\delta$ is achieved. Given a sequence $\sigma$ as above, we let $$\delta(\sigma):= d_T(x,u_1)+d_T(u'_1,u_2)+\dots+d_T(u'_{k-1},y).$$ In order to prove the above fact, we first notice that if the $\pi_S$-image of the sequence $\sigma$ backtracks, then one can find another sequence $\sigma'$ with $\delta(\sigma')\le\delta(\sigma)$: indeed, if $\sigma$ contains a subsequence of the form $u_i,u'_{i},u_{i+1},u'_{i+1}$ where $\pi_S(u_i)=\pi_S(u'_{i+1}):=v$, then this subsequence can be replaced by $u_i,u'_{i+1}$ (indeed, the segment $[u_i,u'_{i+1}]$ is then contained in $K_v$), and this yields a new sequence $\sigma'$ satisfying $\delta(\sigma')\le\delta(\sigma)$. 

In addition, let $u'_i,u_{i+1},u'_{i+1}$ be a finite subsequence arising in a sequence $\sigma$ as above. Let $\gamma:=\pi_S([u_{i+1},u'_{i+1}])$. Let $\gamma_K\subseteq K$ be the subset of $K$ made of those points $x\in K$ such that there exists a leaf segment in $\Sigma$ containing $x$ and projecting to $\gamma$. Then one does not increase the value of $\delta(\sigma)$ if one replaces $u_{i+1}$ by the projection $y$ of $u_i$ to $\gamma_K$, and $u'_{i+1}$ by the unique point $z$ in $\Sigma$ in the same leaf as $y$, such that $\pi_S([y,z])=\gamma$. This implies that the supremum can be taken over the finite set made of those points in $K$ that are extremal in some band of $\Sigma$. In particular, this supremum is achieved.  
\end{proof}

If $T'$ is a tree satisfying the first two conclusions of Theorem \ref{pseudogroup-tree}, the morphism $j$ provided by Theorem \ref{pseudogroup-tree} is called a \emph{resolution} of $T'$. 

\begin{de}
A $(G,\mathcal{F})$-tree $T$ is \emph{geometric} if there exists a finite subtree $K\subseteq T$ such that $T=T_{K}$.
\end{de}

Let $T$ be a $(G,\mathcal{F})$-tree, and let $(T_n)_{n\in\mathbb{N}}$ be a sequence of $(G,\mathcal{F})$-trees. The sequence $(T_n)_{n\in\mathbb{N}}$ \emph{strongly converges} towards $T$ (in the sense of Gillet and Shalen \cite{GS90}) if for all integers $n\le n'$, there exist morphisms $j_{n,n'}:T_n\to T_{n'}$ and $j_n:T_n\to T$ such that for all $n\le n'$ and all segments $I\subseteq T_n$, the morphism $j_{n'}$ is an isometry in restriction to $j_{n,n'}(I)$. Strong convergence implies in particular that for all $g\in G$, there exists $n_0\in\mathbb{N}$ such that for all $n\ge n_0$, we have $||g||_{T_{n}}=||g||_T$. The following theorem essentially follows from work by Levitt and Paulin \cite[Theorem 2.2]{LP97} and Gaboriau and Levitt \cite[Proposition II.1]{GL95}. 

\begin{theo}\label{approx-by-geom}
Let $G$ be a countable group, and let $\mathcal{F}$ be a free factor system of $G$. Let $T$ be a minimal $(G,\mathcal{F})$-tree. Then there exists a sequence $(T_n)_{n\in\mathbb{N}}$ of minimal geometric $(G,\mathcal{F})$-trees that strongly converges towards $T$. If in addition $H$ is a subgroup of $G$ with finite Kurosh rank that is elliptic in $T$, then the approximation can be chosen so that $H$ is elliptic in $T_n$.
\end{theo}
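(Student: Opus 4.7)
My plan is to realize the approximations $T_n$ as geometric trees $T_{K_n}$, for an exhausting nested sequence $(K_n)_{n\ge 1}$ of finite subtrees of $T$ satisfying the hypotheses of Theorem \ref{pseudogroup-tree}, and to derive strong convergence from its universal property. Concretely, I would choose inductively a nested sequence $K_1\subseteq K_2\subseteq\cdots\subseteq T$ such that each $K_n$ contains the fixed points $x_1,\dots,x_k$ of $G_1,\dots,G_k$, meets $gK_n$ for every $g$ in the chosen free basis $B$ of $F_N$, and so that $\bigcup_n K_n$ is dense in the $G$-minimal subtree of $T$; such a sequence exists because $G$ is countable and the $G$-minimal subtree of $T$ is spanned by countably many orbits. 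Theorem \ref{pseudogroup-tree} then produces geometric $(G,\mathcal{F})$-trees $T_n:=T_{K_n}$ together with resolving morphisms $j_n:T_n\to T$.

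\textbf{Strong convergence.} For $n\le n'$, the inclusion $K_n\subseteq K_{n'}\subseteq T_{n'}$ satisfies the first two properties of Theorem \ref{pseudogroup-tree}, so the fourth property provides $G$-equivariant morphisms $j_{n,n'}:T_n\to T_{n'}$; uniqueness promotes these to a directed system compatible with the $j_n$'s. To verify the Gillet--Shalen strong convergence, I would fix a segment $I\subseteq T_n$ and invoke Property $3$ to cover $I$ by finitely many translates $g_1K_n,\dots,g_mK_n$. Since $j_{n'}$ is isometric on every $gK_{n'}$ sitting inside $T_{n'}$, it is in particular isometric on each $j_{n,n'}(g_iK_n)$. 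The remaining point is that the finitely many transition points between consecutive pieces $g_iK_n$ along $I$ correspond, once $n'$ is large enough, to genuine alignments in $T_{n'}$: each alignment is witnessed by the presence of a single point of $T$ in a suitable band of $\Sigma$ for $K_{n'}$, and this can always be arranged since $(K_n)$ is exhausting. The whole argument is modeled on Gaboriau--Levitt \cite[Proposition II.1]{GL95} and Levitt--Paulin \cite[Theorem 2.2]{LP97} and adapts to the $(G,\mathcal{F})$-setting once the geometric machinery of Theorem \ref{pseudogroup-tree} is available.

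\textbf{Minimality and the elliptic subgroup.} For minimality, I would replace each $T_n$ by its $G$-minimal subtree, which remains geometric by Property $3$. To treat the additional statement, let $H\subseteq G$ have finite Kurosh rank and fix a fixed point $p\in T$; fix a Kurosh decomposition $H=F'\ast(\ast_{j=1}^{r}g_jH_jg_j^{-1})$ with $F'$ finitely generated free and $r$ finite. The action of $H$ on the Bass--Serre tree $S$ of the standard $(G,\mathcal{F})$-free splitting then admits a finite minimal subtree $S_H$. My plan is to ensure that $p\in K_v$ for every vertex $v$ of $S_H$: since $H$ permutes these vertices with finitely many orbits and $p$ is $H$-fixed, it suffices to include in $K_n$ finitely many $G$-translates of $p$, one per $H$-orbit of vertices of $S_H$ (for $v=gv_i$ add $g^{-1}p$ to $K_n$). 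The resulting subset $\{v\in V(S):p\in K_v\}$ contains $V(S_H)$, and is automatically $H$-invariant because $hK_v=K_{hv}$. Since $p$ then belongs to $K_v\cap K_{v'}$ for every edge $[v,v']$ of $S_H$, all lifts $(p,v)$ with $v\in V(S_H)$ belong to a single leaf of $\Sigma$ and thus descend to a common point of $T_n$, which is $H$-fixed.

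\textbf{Main obstacle.} The central technical difficulty is the verification of strong convergence: a segment of $T_n$ may carry folds arising from non-obvious realizations of the pseudo-metric $\delta$ in the proof of Theorem \ref{pseudogroup-tree}, and one must show that these folds do unfold once $K_{n'}$ has been enlarged enough to witness the relevant band identifications. This is exactly the content of \cite[Proposition II.1]{GL95}; once the geometric-tree machinery has been generalized to $(G,\mathcal{F})$-trees as in Theorem \ref{pseudogroup-tree}, their argument transfers without essential change.
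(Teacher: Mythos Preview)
Your overall strategy coincides with the paper's: realize the approximations as $T_n=T_{K_n}$ for a nested exhausting sequence of finite subtrees $K_n\subseteq T$, and extract the morphisms $j_{n,n'}$ and $j_n$ from the universal property in Theorem~\ref{pseudogroup-tree}. The differences lie in two places.

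\emph{Minimality.} You propose to replace each $T_n$ by its $G$-minimal subtree and assert that it ``remains geometric by Property~3''. This is not justified: Property~3 only says that every segment of $T_{K_n}$ is covered by finitely many $G$-translates of $K_n$; it does not show that the minimal subtree is itself of the form $T_{K'}$ for some finite $K'$, which is what ``geometric'' means here. The paper sidesteps this issue by building $K_n$ so that $T_{K_n}$ is already minimal: one fixes $x_0\in T$ on the axis of some hyperbolic element, enumerates $G$ as $(g_m)_{m\in\mathbb{N}}$ compatibly with alignment in $S$ (if $v_0,g_nv_0,g_mv_0$ are aligned in this order then $m\ge n$), and takes $K_n$ to be the convex hull of $\{g_mx_0:m\le n\}$. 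All endpoints of $K_n$ then lie in a single $G$-orbit, and Properties~1 and~2 force $T_{K_n}$ to be the convex hull of $G\cdot x_0$, hence minimal.

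\emph{Strong convergence.} Your discussion of ``transition points'' and ``alignments'' is unnecessarily vague. The paper's argument is cleaner: given $I\subseteq T_n$, use Property~3 to cover $I$ by translates $gK_n$ with $g$ in a finite set $Y$, and take $n'$ large enough that $K_{n'}$ contains $\bigcup_{g\in Y}gK_n$ in $T$. Then $j_{n,n'}(I)$ sits inside the isometrically embedded copy of $K_{n'}$ in $T_{n'}$, on which $j_{n'}$ is an isometry by Property~1.

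Your treatment of the elliptic subgroup $H$ via the $H$-minimal subtree $S_H\subseteq S$ is different from the paper's but correct. The paper instead arranges, using its enumeration, that a free basis of the free part of $H$ and the finitely many conjugators in a Kurosh decomposition all occur among $g_1,\dots,g_n$, and that a fixed point of $H$ lies in $K_n$; ellipticity of $H$ in $T_n$ then follows directly from Property~2.
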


\begin{proof}
By carefully choosing an increasing sequence of finite subtrees $K_n\subseteq T$ whose union is the whole tree $T$, we will establish that the trees $T_{K_n}$ strongly converge towards $T$. One point in the proof is to carefully choose $K_n$ so that all trees $T_{K_n}$ are minimal: the idea for this is to choose a subtree $K_n$ all of whose endpoints belong to the $G$-orbit of some fixed basepoint $x_0\in T$. Here are the details of the proof.

Let $T$ be a minimal $(G,\mathcal{F})$-tree. Let $(g_n)_{n\in\mathbb{N}}$ be an enumeration of $G$. Assume that this enumeration is chosen so that if $v_0$, $g_nv_0$ and $g_mv_0$ are aligned in this order in $S$, then $m\ge n$. Let $x_0\in T$, and for all $n\ge 1$, let $K_n$ be the convex hull of $\{g_kx_0|k\le n\}$ in $T$, so that by construction, all extreme points of $K_n$ belong to the $G$-orbit of $x_0$. By minimality, the tree $T$ is the increasing union of the trees $K_n$, and there exists $g\in G$ such that $x_0$ belongs to the translation axis of $g$ in $T$. Let $n_0\in\mathbb{N}$ be large enough so that $K_{n_0}$ contains $gx_0$ and satisfies the hypotheses from Theorem \ref{pseudogroup-tree}.

For all $n\ge n_0$, we let $T_n:=T_{K_n}$. The first property in Theorem \ref{pseudogroup-tree} implies that the distance between $x_0$ and $gx_0$ is the same in $T_n$ and in $T$, so $x_0$ belongs to the axis of $g$ in $T_n$. In addition, the second property in Theorem \ref{pseudogroup-tree}, together with our choice of enumeration of $G$, implies that $T_n$ is the convex hull of the orbit of $x_0$. Hence $T_n$ is minimal. 

We claim that the sequence $(T_n)_{n\in\mathbb{N}}$ strongly converges towards $T$. Given any two integers $n\le n'$, we have $K_n\subseteq K_{n'}$. Theorem \ref{pseudogroup-tree} applied to $K_{n'}$ implies that $T_{n'}$ contains an isometrically embedded copy of $K_{n'}$, and hence of $K_n$. In addition, if two points belong to the same leaf of $\Sigma_n$, then they also belong to the same leaf of $\Sigma_{n'}$ (where $\Sigma_n$ and $\Sigma_{n'}$ denote the band complexes associated to $K_n$ and $K_{n'}$, respectively). Therefore $T_{n'}$ satisfies the first two properties of Theorem \ref{pseudogroup-tree} with respect to the finite tree $K_n$. The last property in Theorem \ref{pseudogroup-tree} (applied to $K_n$) then provides morphisms $j_{n,n'}: T_n\to T_{n'}$ for all $n\le n'$, as well as morphisms $j_n:T_n\to T$ for all $n\in\mathbb{N}$. Let $n\in\mathbb{N}$, and $I\subseteq T_n$. The third property in Theorem \ref{pseudogroup-tree} then enables us to choose a finite set $Y$ of elements of $G$ so that $I$ is covered by the translates of $K_n$ in $T_n$ under elements in $Y$. We then let $n'$ be large enough so that $K_{n'}$ contains the translates of $K_n$ in $T$ under the elements in $Y$. Then $j_{n'}$ is an isometry from $j_{n,n'}(I)$ to $j_n(I)$. This shows strong convergence of the trees $T_n$ towards $T$.  

By choosing $n$ large enough so that all elements in a basis of the free part of the Kurosh decomposition of $H$, as well as all conjugators arising in this decomposition, are of the form $g_k$ for some $k\le n$, and $K_n$ contains a fixed point of $H$, we can ensure that the last property of Theorem \ref{approx-by-geom} is satisfied.
\end{proof}

\begin{rk}\label{rk-vol}
Notice that if branch points are dense in $T$, then the tree $K_n$ has edges of arbitrarily small length as $n$ tends to $+\infty$, from which it follows that simplicial edges in the geometric approximation of $T$ have lengths going to $0$. 
\end{rk}

\subsection{Properties of geometric $(G,\mathcal{F})$-trees}\label{sec-stab}

We now list a few other useful properties of the tree $T_{K}$, which were proved by Gaboriau and Levitt in \cite{GL95} in the case of $F_N$-trees. Any element of $G$ can be written in a unique way as a \emph{reduced} word of the form $s_1\dots s_k$, where
\begin{itemize}
\item for all $i\in\{1,\dots,k\}$, either $s_i\in B\cup B^{-1}$, or else $s_i$ belongs to one of the peripheral groups $G_j$, and
\item no two consecutive letters are of the form $ss^{-1}$ or $s^{-1}s$ with $s\in B$, and
\item no two consecutive letters belong to the same peripheral group $G_j$. 
\end{itemize}
\noindent It is \emph{cyclically reduced} if in addition, it does not start and end with letters that are inverses to each other, and does not either start and end with letters belonging to a common peripheral group.

\begin{prop}\label{prop-abg}
Let $T$ be a $(G,\mathcal{F})$-tree, let $K\subseteq T$ be a finite subtree that contains the fixed points of all groups $G_j$, and such that $K\cap sK\neq\emptyset$ for all $s\in B$, and let $g\in G$ be nonperipheral and cyclically reduced. Then the fixed point set of $g$ in $T_{K}$ is contained in $K$.
\end{prop}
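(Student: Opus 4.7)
The plan is: pick any representative of a $g$-fixed point $p\in T_K$ in the band complex $\Sigma$, observe that the leaf through it is $g$-invariant, and use the fact that $g$, being cyclically reduced and non-peripheral, acts hyperbolically on the Bass--Serre tree $S$ with $v_0$ on its axis to force the leaf to lie over $v_0$. Once that is established the common $T$-coordinate of the leaf lies automatically in $K_{v_0}=K$, and the isometric embedding $K\hookrightarrow T_K$ given by Theorem~\ref{pseudogroup-tree} places $p$ inside $K$.

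First, pick any representative $(y,v)\in\Sigma$ of $p$. Then $(gy,gv)$ represents $gp=p$ as well, so $\delta((y,v),(gy,gv))=0$. The last paragraphs of the proof of Theorem~\ref{pseudogroup-tree} show that the infimum defining $\delta$ is achieved by a monotone minimizing sequence; when the value is $0$, every horizontal segment of the achieving sequence has length $0$, so the sequence is purely vertical. Therefore $(y,v)$ and $(gy,gv)$ lie in a common leaf $L$ of $\Sigma$. Since leaves of $\Sigma$ have constant $T$-coordinate, this already gives $gy=y$ in $T$; and since $L$ coincides with the $\delta$-class of $p$, the equality $gp=p$ forces $gL=L$.

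Now project: set $Y:=\pi_S(L)\subseteq S$. The restriction $\pi_S|_L$ is injective on the vertices of $L$ (these are of the form $(y,w)$, and $y$ is fixed along $L$) and maps each band segment of $L$ bijectively to an edge of $S$, so $Y$ is a connected subgraph of the tree $S$, hence a subtree; and $Y$ is $g$-invariant because $gL=L$. Since $g$ is non-peripheral, it is not elliptic in $S$ (whose vertex stabilisers are trivial or conjugate into some $G_i$), so $g$ acts hyperbolically on $S$ with some axis $\ell_g$. Because $g=s_1\cdots s_r$ is cyclically reduced, iterating $g$ along the geodesic $[v_0,gv_0]$ never backtracks: reducedness rules out backtracking inside each copy, and cyclic reducedness rules out backtracking at the junctions $g^nv_0$. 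The bi-infinite concatenation $\bigcup_{n\in\mathbb{Z}}g^n[v_0,gv_0]$ is therefore the axis $\ell_g$, and contains $v_0$. A standard argument (take any $y_0\in Y$ and form the convex hull of $\{g^ny_0\}_{n\in\mathbb{Z}}$) gives $\ell_g\subseteq Y$, so $v_0\in Y$.

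Finally, $v_0\in Y=\pi_S(L)$ means $L$ contains a point over $v_0$; since $L$ has constant $T$-coordinate $y$, this point must be $(y,v_0)$, which is an element of $\Sigma$ only if $y\in K_{v_0}=K$. The first property of Theorem~\ref{pseudogroup-tree} then identifies $p=[(y,v_0)]$ with $y\in K$ under the isometric embedding $K\hookrightarrow T_K$, so $p\in K$. The only delicate step is the passage from $\delta=0$ to leaf equivalence, which is supplied by the monotonicity and achievability of minimizing sequences inside the proof of Theorem~\ref{pseudogroup-tree}; the rest is a routine application of the dictionary between cyclic reducedness in a free product and hyperbolic axes in the Bass--Serre tree of a standard $(G,\mathcal{F})$-free splitting.
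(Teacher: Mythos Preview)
Your argument is correct, and it rests on the same two ingredients the paper uses: the fact (established inside the proof of Theorem~\ref{pseudogroup-tree}) that $\delta=0$ forces two points of $\Sigma_0$ to lie on a common leaf, and the fact that a nonperipheral cyclically reduced element has $v_0$ on its axis in $S$.

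The organization differs, though. The paper argues by contradiction: it picks a representative $(x,hv_0)$ with $d_S(v_0,hv_0)$ minimal, observes that the leaf through it also reaches $(x,h^{-1}ghv_0)$, and then reads off from the first and last letters of the reduced word for $h^{-1}gh$ two new leaf segments to vertices that would be strictly closer to $v_0$ unless cancellation occurs --- forcing $g$ to fail to be cyclically reduced. Your version is more global and geometric: you show directly that the whole leaf $L$ is $g$-invariant, that $\pi_S|_L$ embeds $L$ as a $g$-invariant subtree of $S$, and that any such subtree must contain the axis of $g$, hence $v_0$. The paper's minimal-distance choice is doing the work of detecting the axis one letter at a time; your argument replaces that with a single appeal to the standard fact that an invariant subtree contains the axis. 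Both approaches are short; yours is arguably cleaner because it isolates the role of cyclic reducedness (namely $v_0\in\ell_g$) from the combinatorics of the leaf.

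One very minor point: you pick a representative in $\Sigma$, but the pseudometric $\delta$ and the ``$\delta=0\Rightarrow$ same leaf'' statement in the proof of Theorem~\ref{pseudogroup-tree} are phrased for points of $\Sigma_0$. This is harmless (every leaf meets $\Sigma_0$, as the paper notes), but you could simply take $(y,v)\in\Sigma_0$ from the outset.
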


\begin{proof}
The proof goes as in \cite[Proposition I.5]{GL95}. Let $a\in T_{K}$ be a fixed point of $g$. Choose a representative $(x,v)\in \Sigma_0$ of $a$, with $v=hv_0$ for some $h\in G$, such that the distance from $v_0$ to $v$ is minimal. We let $x_0:=h^{-1}x$. Assume towards a contradiction that $v\neq v_0$, i.e. $h\neq e$. We have $(x,v)=(gx,gv)=(x,gv)$ in $T_K$: the first equality comes from the fact that $g(x,v)=(x,v)$ in $T_K$, and the second from the fact that $gx=x$ in $T$. By applying $h^{-1}$, we get that $(x_0,v_0)=(x_0,h^{-1}ghv_0)$. It follows from Theorem~\ref{pseudogroup-tree} that $(x_0,v_0)$ and $(x_0,h^{-1}ghv_0)$ lie in a common leaf of $\Sigma$. Let $g_1$ (resp. $g_k$) be the first (resp. last) letter in the reduced word representing $h^{-1}gh$. Then there is a leaf in $\Sigma$ joining $(x_0,v_0)$ to $(x_0,g_1v_0)$, and there is a leaf in $\Sigma$ joining $(x_0,h^{-1}ghg_k^{-1}v_0)$ to $(x_0,h^{-1}ghv_0)$. By translating those by $h$ and $g^{-1}h$, respectively, we find leaves in $\Sigma$ joining $(x,v)$ to $(x,hg_1v_0)$ and to $(x,hg_k^{-1}v_0)$. Our choice of $v$ then implies that the reduced word representing $h$ cannot end with $g_k$, and the reduced word representing $h^{-1}$ cannot begin with $g_1$. But since $h^{-1}gh$ begins with $g_1$ and ends with $g_k$, this forces $g$ not to be cyclically reduced, a contradiction.
\end{proof} 

\begin{cor}\label{stab-isom}
Let $T$ be a small $(G,\mathcal{F})$-tree, and let $K$ be a finite subtree of $T$ that contains all fixed points of the $G_i$'s, and such that $K\cap sK\neq\emptyset$ for all $s\in B$. For all $g\neq 1\in G$, the restriction of the resolution map $j:T_{K}\to T$ to the fixed point set of $g$ is an isometry. In particular, arc stabilizers in $T_{K}$ are either trivial, or cyclic and non-peripheral. If $T$ is very small, then tripod stabilizers in $T_{K}$ are trivial.
\qed
\end{cor}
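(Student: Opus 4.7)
The strategy is a case analysis on whether $g$ is peripheral, with Proposition~\ref{prop-abg} doing all the real work.

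For the peripheral case, the definition of a $(G,\mathcal{F})$-tree forces $g$ to fix a unique point in both $T$ and $T_K$, so the restriction of $j$ to the singleton $\mathrm{Fix}_{T_K}(g)$ is trivially an isometry. For the nonperipheral case, I would first reduce to cyclically reduced conjugates: write $g = h g' h^{-1}$ with $g'$ nonperipheral and cyclically reduced (a standard cyclic reduction for free products). By Proposition~\ref{prop-abg}, one has $\mathrm{Fix}_{T_K}(g') \subseteq K$, where $K$ denotes its isometric copy sitting inside $T_K$ via property 1 of Theorem~\ref{pseudogroup-tree}, and property 4 of the same theorem says that $j$ is the identity on this copy of $K$. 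Hence the restriction of $j$ to $\mathrm{Fix}_{T_K}(g')$ is already an isometry. Since $j$ is $G$-equivariant, $G$ acts by isometries on both trees, and $\mathrm{Fix}_{T_K}(g) = h \cdot \mathrm{Fix}_{T_K}(g')$, the restriction $j|_{\mathrm{Fix}_{T_K}(g)}$ is also an isometry.

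The two consequences then follow immediately. If $H \subseteq G$ is the stabilizer of a nondegenerate arc $[a,b] \subseteq T_K$, then for each $g \in H$ we have $[a,b] \subseteq \mathrm{Fix}_{T_K}(g)$, so $j([a,b])$ is a nondegenerate arc in $T$ of the same length that is fixed by $g$. This image is independent of $g$, so $H$ fixes an arc in $T$; smallness of $T$ then forces $H$ to be trivial or infinite cyclic and nonperipheral. If in addition $T$ is very small and some $g \in G$ fixes a nondegenerate tripod in $T_K$, applying $j$ isometrically to this tripod produces a nondegenerate tripod in $T$ fixed by $g$, which forces $g = 1$ by the tripod condition in $T$. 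The argument is entirely formal once Proposition~\ref{prop-abg} is available; no real obstacle is expected beyond unwinding the identifications $K \hookrightarrow T_K$ and $j|_K = \mathrm{id}_K$.
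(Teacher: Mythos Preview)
Your proposal is correct and matches what the paper has in mind: the corollary is marked \qed\ precisely because it follows from Proposition~\ref{prop-abg} via the conjugation-to-cyclically-reduced trick and the fact that $j|_K$ is the identity, exactly as you spell out. One cosmetic point: in the arc-stabilizer paragraph you should say ``for each nontrivial $g\in H$'' when invoking the isometry of $j$ on $\mathrm{Fix}_{T_K}(g)$, since the first sentence of the corollary only applies to $g\neq 1$; the argument is unaffected.
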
  

Let $k\in\mathbb{N}$. A $(G,\mathcal{F})$-tree $T$ is \emph{$k$-tame} if $T$ is small, and in addition, we have $\text{Fix}(g^{kl})=\text{Fix}(g^k)$ for all $l\ge 1$. We refer to Section \ref{sec-good} for details and equivalent definitions.

\begin{cor}\label{lemma-abg}
Let $T$ be a small $(G,\mathcal{F})$-tree, and let $K$ be a finite subtree of $T$ that contains all fixed points of the $G_i$'s, and such that $K\cap sK\neq\emptyset$ for all $s\in B$. For all $k\in\mathbb{N}$, if $T$ is $k$-tame, then $T_{K}$ is $k$-tame.
\end{cor}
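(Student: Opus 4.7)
The plan is to verify the two ingredients of $k$-tameness for $T_K$: smallness, and the equality $\mathrm{Fix}_{T_K}(g^{kl}) = \mathrm{Fix}_{T_K}(g^k)$ for every $g \in G$ and every $l \ge 1$. Smallness is a direct consequence of Corollary~\ref{stab-isom}, so the work concentrates on the fixed-set condition. The inclusion $\mathrm{Fix}_{T_K}(g^k) \subseteq \mathrm{Fix}_{T_K}(g^{kl})$ being automatic, only the reverse inclusion is at stake.

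To prove the reverse inclusion, I would propagate $k$-tameness from $T$ to $T_K$ via the resolution morphism $j : T_K \to T$, exploiting two features of $j$ jointly: its $G$-equivariance, and the fact (from Corollary~\ref{stab-isom}) that its restriction to the fixed-point set of any nontrivial element of $G$ is an isometry, hence injective. Fix $g \in G$, $l \ge 1$, and set $F := \mathrm{Fix}_{T_K}(g^{kl})$. In the main case $g^{kl} \neq 1$ in $G$, Corollary~\ref{stab-isom} applied to $g^{kl}$ makes $j|_F$ injective. By equivariance, $j(F) \subseteq \mathrm{Fix}_T(g^{kl})$, and by $k$-tameness of $T$ this equals $\mathrm{Fix}_T(g^k)$. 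Since $g^k$ commutes with $g^{kl}$, the set $F$ is $g^k$-invariant; for any $x \in F$, equivariance yields $j(g^k x) = g^k j(x) = j(x)$, and injectivity of $j|_F$ then forces $g^k x = x$. Hence $F \subseteq \mathrm{Fix}_{T_K}(g^k)$, and equality holds.

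The cases where $F$ is empty or reduced to a single point can be disposed of without invoking Corollary~\ref{stab-isom} or the $k$-tameness of $T$: they follow from the standard fact that powers of a hyperbolic isometry of an $\mathbb{R}$-tree are hyperbolic with the same axis, combined with the automatic inclusion $\mathrm{Fix}_{T_K}(g^k) \subseteq F$. The delicate case I expect to be the principal obstacle is the torsion case $g^{kl} = 1$ in $G$, in which Corollary~\ref{stab-isom} cannot be applied to $g^{kl}$ and one must argue by hand, using $k$-tameness of $T$ (which forces $\mathrm{Fix}_T(g^k) = T$) together with the explicit construction of $T_K$ from the band complex to promote this to a global fixed-point statement on $T_K$; this case, however, should be controlled uniformly by the broader definitional framework introduced in Section~\ref{sec-good}.
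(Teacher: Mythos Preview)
Your argument is correct, and the torsion worry at the end dissolves once you restrict to nonperipheral $g$, as the definition of $k$-tame in Section~\ref{sec-good} does: in the free product $G=G_1\ast\dots\ast G_k\ast F_N$, every torsion element is conjugate into some $G_i$ and is therefore peripheral, so a nonperipheral $g\neq 1$ has infinite order and $g^{kl}\neq 1$ always. With that restriction in place, Corollary~\ref{stab-isom} applies to $g^{kl}$ and your injectivity-and-pullback argument goes through cleanly; the trivial element and the peripheral case need no separate treatment.

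Your route differs from the paper's. The paper first conjugates to make $g$ cyclically reduced and invokes Proposition~\ref{prop-abg} so that $\mathrm{Fix}_{T_K}(g)\subseteq K$; it then argues directly inside the band complex $\Sigma$, observing that a leaf segment from $(x,v_0)$ to $(x,g^{kl}v_0)$ contains, by cyclic reduction, a subsegment from $(x,v_0)$ to $(x,g^kv_0)$, and that $k$-tameness of $T$ turns this into a leaf from $(x,v_0)$ to $(g^kx,g^kv_0)$, giving $g^kx=x$ in $T_K$. Your argument bypasses the band-complex combinatorics entirely, using only equivariance of $j$ and the injectivity statement of Corollary~\ref{stab-isom}; this is shorter and more conceptual. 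The paper's approach, on the other hand, makes explicit where the combinatorics of reduced words enters and keeps the argument parallel to the proof of Proposition~\ref{prop-abg}.
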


\begin{proof}
Let $g\in G$ be nonperipheral, and let $k\in\mathbb{N}$. Up to passing to a conjugate, we can assume that $g$ is cyclically reduced, and in this case the fixed point set of $g$ in $T_{K}$ is contained in $K$ by Proposition \ref{prop-abg}. Let $x\in K$ be such that there exists $l\in\mathbb{N}$ such that $g^{kl}x=x$ in $T_K$. This implies in particular that $g^{kl}x=x$ in $T$. We will show that $g^{k}x=x$ in $T_K$. By Theorem \ref{pseudogroup-tree}, there is a leaf segment in $\Sigma$ joining $(x,v_0)$ to $(x,g^{kl}v_0)$. But since $g$ is cyclically reduced, this implies that there is also a leaf segment in $\Sigma$ (which is a subsegment of the previous one) joining $(x,v_0)$ to $(x,g^kv_0)$. Since $g^kx=x$ in $T$ (because $T$ is $k$-tame), we have found a leaf segment in $\Sigma$ joining $(x,v_0)$ to $(g^kx,g^kv_0)$. This implies that $g^kx=x$ in $T_K$, as required. 
\end{proof}

As a consequence of Theorem \ref{approx-by-geom} and the above two corollaries, we get the following approximation result. 

\begin{theo}\label{approx-by-geom-2}
Let $G$ be a countable group, and let $\mathcal{F}$ be a free factor system of $G$. Let $T$ be a minimal, small $(G,\mathcal{F})$-tree. Then there exists a sequence $(T_n)_{n\in\mathbb{N}}$ of minimal, small, geometric $(G,\mathcal{F})$-trees, that strongly converges towards $T$. If $T$ is very small (resp. $k$-tame for some $k\in\mathbb{N}$), the approximation can be chosen very small (resp. $k$-tame).
\qed
\end{theo}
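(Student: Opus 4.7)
The plan is to combine Theorem \ref{approx-by-geom} with Corollaries \ref{stab-isom} and \ref{lemma-abg}, transferring each relevant property of $T$ to its geometric approximations in turn. Applying Theorem \ref{approx-by-geom} to the given minimal small $(G,\mathcal{F})$-tree $T$ produces a sequence $(T_n)_{n\in\mathbb{N}}$ of minimal geometric $(G,\mathcal{F})$-trees, each of the form $T_{K_n}$, that strongly converges to $T$. The subtrees $K_n$ constructed in the proof of Theorem \ref{approx-by-geom} contain the fixed point of every $G_i$ and satisfy $K_n\cap sK_n\neq\emptyset$ for all $s\in B$, so Corollaries \ref{stab-isom} and \ref{lemma-abg} apply to each $K_n$. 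In particular, Corollary \ref{stab-isom} immediately yields that every $T_n$ is small.

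Suppose now that $T$ is very small. Corollary \ref{stab-isom} already ensures that tripod stabilizers in each $T_n$ are trivial, so it only remains to check that nontrivial arc stabilizers in $T_n$ are closed under taking roots. The key observation is that $T$ is automatically $1$-tame: peripheral elements of $G$ fix a single point in $T$, since a peripheral element cannot stabilize a nondegenerate arc in a very small tree (arc stabilizers are non-peripheral). For non-peripheral $g$, the subtree $\text{Fix}(g^l)$ contains no tripod, hence is either empty, a point, or a linear subtree; $g$ preserves $\text{Fix}(g^l)$ because it commutes with $g^l$, and whenever $\text{Fix}(g^l)$ contains a nondegenerate arc $I$, the root-closedness of $\mathrm{Stab}(I)$ in $T$ forces $g\in\mathrm{Stab}(I)$, so $g$ pointwise-fixes all of $\text{Fix}(g^l)$. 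Applying Corollary \ref{lemma-abg} with $k=1$ then shows that each $T_n$ is $1$-tame, and root-closedness of nontrivial arc stabilizers in $T_n$ is an immediate consequence: if $h^l$ fixes an arc $I'\subseteq T_n$, then $I'\subseteq\text{Fix}(h^l)=\text{Fix}(h)$. This completes the proof that each $T_n$ is very small.

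When $T$ is $k$-tame, smallness of each $T_n$ is again provided by Corollary \ref{stab-isom}, and $k$-tameness transfers directly via Corollary \ref{lemma-abg}. The only non-mechanical point in the whole argument is the verification that very smallness implies $1$-tameness; this is a brief case analysis on $\text{Fix}(g^l)$ rather than a genuine obstacle, and every other step is a direct invocation of one of the preceding results.
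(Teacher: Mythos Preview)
Your proof is correct and follows exactly the route the paper intends: the theorem is stated with a \qed\ because it is meant to be read as an immediate consequence of Theorem \ref{approx-by-geom} together with Corollaries \ref{stab-isom} and \ref{lemma-abg}. Your explicit verification that very small implies $1$-tame (so that Corollary \ref{lemma-abg} handles root-closedness) is the one detail the paper leaves implicit, and your argument for it is sound --- note in particular that nonperipheral elements of $G$ are torsion-free by Kurosh's theorem, so $g^l\neq 1$ and the case analysis on $\mathrm{Fix}(g^l)$ goes through.
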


\subsection{Interpretation in terms of systems of isometries}\label{sec-systems}

Let $K$ be a finite forest. Given two nonempty closed subtrees $A$ and $B$ of $K$, a \emph{morphism} from $A$ to $B$ is a surjective map $\phi:A\to B$, such that $A$ can be subdivided into finitely many subtrees in restriction to which $\phi$ is an isometry. The subtrees $A$ and $B$ are called the \emph{bases} of $\phi$. A \emph{finite system of morphisms} is a pair $\mathcal{K}=(K,\Phi)$, where $K$ is a finite forest, and $\Phi$ is a finite collection of morphisms between nonempty closed subtrees of $K$.

In the case where all morphisms $\phi\in\Phi$ are isometries between their bases, we call $\mathcal{K}$ a \emph{finite system of isometries}. Notice that any finite system of morphisms can be turned into a finite system of isometries by appropriately subdividing the bases of the morphisms in $\Phi$.
 
We now assume that $\Phi$ is a finite system of isometries. Given an isometry $\phi$ from $A_{\phi}$ to $B_{\phi}$, we denote by $\phi^{-1}$ its inverse, which is a partial isometry from $B_{\phi}$ to $A_{\phi}$. Given partial isometries $\phi_1,\dots,\phi_n$, we denote by $\phi_1\circ\dots\circ\phi_n$ the composition of the $\phi_i$'s, which is a partial isometry whose domain is the set of all $x\in A_{\phi_n}$ such that for all $i\in\{2,\dots,n\}$, we have $\phi_i\circ\dots\circ\phi_n(x)\in A_{\phi_{i-1}}$. A word in the partial isometries in the family $\Phi$ and their inverses is \emph{reduced} if it does not contain any subword of the form $\phi\circ\phi^{-1}$ or $\phi^{-1}\circ\phi$. A finite system of isometries $\mathcal{K}$ has \emph{independent generators} if no reduced word in the isometries in $\Phi$ and their inverses represents a partial isometry of $K$ that fixes some nondegenerate arc. 

Let $T$ be a $(G,\mathcal{F})$-tree, let $K\subseteq T$ be a finite subtree, and let $\Sigma$ be the corresponding foliated complex. We recall from Section \ref{sec-pg-tree} that $X$ is the disjoint union of all trees $K_v$. The quotient space $F:=X/G$ is a finite forest: one connected component of $F$ is isometric to $K_{v_0}$, and the other components are isometric to the quotient spaces $K_{v_i}/G_i$. There is a naturally defined finite system of morphisms on $F$: every map has one base contained in $K_{v_0}$, and there are $N$ morphisms with image in $K_{v_0}$, and for each $i\in\{1,\dots,k\}$, there is a morphism from $K_{v_0}$ to $K_{v_i}/G$. The morphisms of the second kind may however fail to be isometries because two segments in a common base of $K_{v_0}$ may have images in $K_{v_i}$ that belong to the same $G_{v_i}$-orbit, in which case they have the same image in the quotient. However, as mentioned above, by subdividing the bases of these morphisms at the points $x_i\in K_{v_0}$ corresponding to the fixed points of the groups $G_i$, we get a finite system of isometries associated to $K$.
%To get a finite system of isometries on the finite forest $X/G$, we subdivide all bands in $\Sigma$ at the special points of their bases: this operation consists in cutting each band $B$ having a base containing a special point along the leaf that contains this special point, and in this way $B$ is replaced by finitely many new bands (finiteness follows from the fact that the tree $K$ is finite). By doing so, the quotient map from $X$ to $X/G$ becomes injective on bases of bands in $\Sigma$. Therefore, we get a finite system $\mathcal{K}$ of isometries on $X/G$, whose bases are the images of the bases of the bands in $\Sigma$ under the quotient map.

\begin{lemma}\label{lemma-genind}
Let $T$ be a $(G,\mathcal{F})$-tree in which no element of $G$ fixes a nondegenerate arc, and let $K\subseteq T$ be a finite subtree. Then the finite system of isometries associated to $K$ has independent generators.
\end{lemma}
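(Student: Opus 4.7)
The plan is to prove the contrapositive: starting from a reduced word $w$ in the partial isometries of $\mathcal{K}$ and their inverses that fixes a nondegenerate arc $I \subseteq F$, I will construct a nontrivial element of $G$ fixing a nondegenerate arc of $T$, contradicting the hypothesis. First I would reduce to the case $I \subseteq K_{v_0}$ (which is canonically isometric to $K$, since $v_0$ has trivial stabilizer) by conjugating $w$ by a suitable peripheral piece and simplifying the resulting word if necessary. Next, associate to each partial isometry an element of $G$: a loop $\phi_j$ contributes $g_j^{-1} \in F_N$ (and $\phi_j^{-1}$ contributes $g_j$), while a peripheral piece $\psi_i^{(h)}$ contributes $h \in G_i$ (and its inverse contributes $h^{-1}$), where the label $h \in G_i$ is the element sending the piece onto a chosen representative in $K_{v_i}/G_i$. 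Let $g_w \in G$ be the product of these contributions, read in composition order. A direct calculation gives $w(\bar x) = \overline{g_w\, x}$ for every $\bar x$ in the domain of $w$; combined with the triviality of the stabilizer of $v_0$, the fixed-point condition $w(\bar x) = \bar x$ translates into $g_w x = x$ in $T$, so $g_w$ fixes the lift $\tilde I \subseteq K \subseteq T$ pointwise.

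The crux, and what I expect to be the main obstacle, is to verify that $g_w \neq 1$ in $G$. The plan is to show that, written as the product of the piece contributions, $g_w$ is a reduced word in the free product decomposition $G = F_N \ast G_1 \ast \cdots \ast G_k$. The key observation is a domain-range constraint for the subdivided pieces: because the subdivision at the fixed point $x_i$ partitions $K \cap K_{v_i}$ into pieces whose bases meet only at $x_i$, any subword of the form $\psi_i^{(h'_2)^{-1}} \psi_i^{(h_2)} \psi_i^{(h'_1)^{-1}} \psi_i^{(h_1)}$ consisting of two adjacent $G_i$-blocks with no intervening loop or block from another $G_{i'}$ has middle composition $\psi_i^{(h_2)} \psi_i^{(h'_1)^{-1}}$ defined only at $\{x_i\}$, a single point. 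Since $w$ fixes a nondegenerate arc, no such subword can occur, and $w$ therefore decomposes as an alternating succession of loop-blocks and peripheral blocks in which any two consecutive peripheral blocks come from distinct free factors $G_i$. Each loop-block then contributes a nontrivial reduced word in $F_N$ (reducedness of $w$ preventing cancellation inside the block), and each peripheral block $\psi_i^{(h')^{-1}} \psi_i^{(h)}$ contributes $h'^{-1} h \in G_i \setminus \{1\}$ (nontrivial since $h \neq h'$ by reducedness). Consequently, $g_w$ is expressed as a reduced word of positive length in the free product $G$, and is thus nontrivial.

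Combining the two parts yields an element $g_w \in G \setminus \{1\}$ fixing a nondegenerate arc of $T$, contradicting the hypothesis of the lemma. The principal technical difficulty is the combinatorial analysis just sketched: formalizing the block decomposition of reduced words in $\mathcal{K}$ and rigorously verifying that the domain-range constraints really do preclude same-factor consecutive peripheral blocks whenever $w$ fixes a nondegenerate arc. Once this structural analysis is in place, nontriviality of $g_w$ follows from the standard theory of reduced words in free products of groups.
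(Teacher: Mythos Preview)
Your approach is correct and follows essentially the same strategy as the paper's: extract from the reduced word a nontrivial element $g\in G$ that fixes a nondegenerate arc in $T$. The paper's proof is much terser. It observes that a reduced word fixing an arc in $X/G$ lifts to a leaf segment in $\Sigma$ joining $(x,v_0)$ to $(gx,gv_0)$ for some $g\in G$, so that $g$ fixes $I$ in $T_K$; it then invokes the resolution morphism $T_K\to T$ from Theorem~\ref{pseudogroup-tree} to conclude that $g$ fixes an arc in $T$. The nontriviality of $g$ is left entirely implicit in the paper.

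Your block decomposition is precisely what is needed to justify that implicit step. The key point you identify --- that after subdividing at $x_i$ the domains $A_j$ of the peripheral pieces meet only at $x_i$, so two consecutive $G_i$-blocks in a reduced word force the composite to be defined only at $\{x_i\}$ --- is exactly what guarantees that the lifted leaf path in $S$ does not backtrack at any vertex $v_i$, and hence that $g_w\neq 1$. Your direct passage to $T$ (rather than via $T_K$) is harmless, since $K_{v_0}\cong K$ sits isometrically in both. One small point: your labelling of individual peripheral pieces $\psi_i^{(h)}$ by elements $h\in G_i$ is not quite natural (the pieces are really indexed by components of $K\setminus\{x_i\}$); what is well-defined is the element of $G_i$ associated to a \emph{block} $(\psi_i^{(j')})^{-1}\circ\psi_i^{(j)}$, and your argument only uses this. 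The initial reduction to $I\subseteq K_{v_0}$ by conjugation is routine but does require the simplification you mention, since the conjugated word may fail to be reduced at its ends.
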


\begin{proof}
We denote by $\mathcal{K}$ this finite system of isometries. Assume towards a contradiction that a reduced word in the partial isometries in $\mathcal{K}$ represents a partial isometry fixing a nondegenerate arc of $X/G$. Then one could find a nondegenerate arc $I\subseteq K$, and an element $g\in G$, such that for all $x\in I$, there is a leaf joining $(x,v_0)$ to $(gx,gv_0)$ in $\Sigma$. But this implies that $I=gI$ in $T_K$. By the fourth property in Theorem \ref{pseudogroup-tree}, there is a morphism from $T_K$ to $T$, so $g$ also fixes a nondegenerate arc in $T$, a contradiction. 
\end{proof}

\section{Dimension of the space of very small $(G,\mathcal{F})$-trees}\label{sec-dimension}

Bestvina and Feighn have shown in \cite[Corollary 7.12]{BF94} that the space of very small $F_N$-trees has dimension $3N-4$. Their result was improved by Gaboriau and Levitt in \cite[Theorem V.2]{GL95}, who showed in addition that $VSL(F_N)\smallsetminus CV_N$ has dimension $3N-5$. Following Gaboriau and Levitt's proof, we extend their computation to the general case of $(G,\mathcal{F})$-trees. We recall the notion of the free rank of $(G,\mathcal{F})$ from Section \ref{sec-free-products}.

\begin{theo}\label{dimension}
Let $G$ be a countable group, and let $\mathcal{F}$ be a free factor system of $G$, such that $\text{rk}_K(G,\mathcal{F})\ge 2$. Then $VSL(G,\mathcal{F})$ has topological dimension $3\text{rk}_f(G,\mathcal{F})+2|\mathcal{F}|-4$.
\end{theo}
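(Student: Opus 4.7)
The plan is to prove the equality by establishing both inequalities separately, writing $N:=\text{rk}_f(G,\mathcal{F})$ and $k:=|\mathcal{F}|$ throughout, and following the strategy used by Gaboriau and Levitt in \cite{GL95} for $F_N$-trees. For the lower bound $\dim VSL(G,\mathcal{F}) \geq 3N + 2k - 4$, I would exhibit an open simplex of that dimension inside $\mathbb{P}\mathcal{O}(G,\mathcal{F}) \subseteq VSL(G,\mathcal{F})$. Consider a Grushko $(G,\mathcal{F})$-tree whose quotient graph of groups is maximally subdivided: every vertex with trivial stabilizer is trivalent, and each of the $k$ vertices carrying a peripheral group $G_i$ is terminal (valence one). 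Letting $V_0$ be the number of trivial-stabilizer vertices and $E$ the number of edges, the identity $E = V_0 + k + N - 1$ (forced by the fundamental group of the graph of groups being $G$) together with the trivalence condition $3V_0 + k = 2E$ yields $V_0 = 2N + k - 2$ and $E = 3N + 2k - 3$. Varying the lengths of these $E$ edges produces an open simplex in $\mathbb{P}\mathcal{O}(G,\mathcal{F})$ of dimension $E - 1 = 3N + 2k - 4$.

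For the upper bound, I would show that every point of $VSL(G,\mathcal{F})$ has a neighborhood of dimension at most $3N + 2k - 4$. Given $T \in VSL(G,\mathcal{F})$, Theorem~\ref{approx-by-geom-2} provides a strong approximation of $T$ by geometric very small $(G,\mathcal{F})$-trees $T_n$, so it suffices to bound the local dimension near each $T_n$ and transfer this bound to $T$ by a semi-continuity argument (strong convergence preserves translation length functions and hence controls the behavior of nearby trees). For a geometric tree $T_n$, Guirardel's transverse covering (Proposition~\ref{skeleton}) together with the Rips--Levitt decomposition express $T_n$ as a graph of actions whose nondegenerate vertex actions are either simplicial, or dual to measured foliations on $2$-orbifolds with boundary (surface type), or of Levitt (thin) type. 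Each type of piece contributes a computable number of independent deformation parameters, and the analog of the Gaboriau--Levitt index formula, applied to the band complex $\Sigma$ of Section~\ref{sec-pg-tree}, expresses the sum of these parameters as an Euler-characteristic quantity controlled by the Kurosh rank $\text{rk}_K(G,\mathcal{F}) = N + k$.

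The main obstacle is making the index/parameter count tight enough to give exactly $3N + 2k - 4$ rather than a looser bound. The novelty over the $F_N$ case lies in correctly accounting for the peripheral vertex groups: the unique fixed point of each $G_i$ in any very small tree is a forced attaching point of the graph of actions, and the edges incident to these points should contribute $2k$ (not $3k$) to the total count, which is precisely what separates the formula $3N + 2k - 4$ from a naive guess like $3(N+k) - 4$. The same accounting is the source of the announced bound of $2N + 2k - 2$ on orbits of branch points, and I would expect both estimates to fall out of the same Poincar\'e--Hopf-type identity on the skeleton of the transverse covering, proved by induction on the complexity of the decomposition (peeling off a trivalent interior vertex, a surface/Levitt piece, or a peripheral terminal edge one at a time and tracking the corresponding $+3$, indecomposable, or $+2$ increments).
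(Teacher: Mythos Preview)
Your lower bound argument is correct and matches the paper's: both exhibit a simplex of the right dimension inside $\mathbb{P}\mathcal{O}(G,\mathcal{F})$.

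Your upper bound strategy, however, has a genuine gap and diverges from what the paper does. You propose to decompose a geometric tree via its dynamical decomposition, count ``deformation parameters'' for each piece, and then transfer the resulting local dimension bound from the geometric approximations $T_n$ back to $T$ by a ``semi-continuity argument.'' Neither step is justified. First, counting parameters for a \emph{fixed} graph-of-actions structure bounds at best the dimension of the stratum of trees sharing that same combinatorial decomposition; it says nothing about nearby trees in $VSL(G,\mathcal{F})$ whose decomposition is different, and $VSL(G,\mathcal{F})$ is not a priori stratified in any way that would let you pass from strata to the whole space. Second, strong convergence of $T_n$ to $T$ gives you no control over arbitrary trees in a neighborhood of $T$; topological dimension is not semicontinuous along strongly convergent sequences, so there is no mechanism to ``transfer'' a local bound from the $T_n$ to $T$.

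The paper's route avoids both issues by working with an invariant that is defined uniformly for every tree and is amenable to a global dimension bound: the $\mathbb{Q}$-rank $r_{\mathbb{Q}}(T)$ of the subgroup of $\mathbb{R}$ generated by the translation lengths. Proposition~\ref{rank-ccl} (from \cite{GL95}) says that the space of projectivized length functions with $r_{\mathbb{Q}}\le m$ has dimension at most $m-1$; this is the crucial fact that converts a numerical invariant into a topological dimension bound, and it has no analogue in your parameter-counting scheme. The index inequality you allude to is indeed used (Proposition~\ref{bound-index}), but its role is to bound the number $b$ of orbits of branch or inversion points, which then feeds into the estimate $r_{\mathbb{Z}}(T)\le \text{rk}_f(G,\mathcal{F})+b-1$ (Proposition~\ref{rank-finite}) for geometric $T$, and a strong-limit argument (Proposition~\ref{rank-non-geom}) pushes the $\mathbb{Q}$-rank bound to nongeometric trees. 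This is what you are missing: the branch-point/index count must be converted into a $\mathbb{Q}$-rank bound, and then Proposition~\ref{rank-ccl} does the work that your ``semi-continuity'' was supposed to do.
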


\begin{theo}\label{dimension-2}
Let $G$ be a countable group, and let $\mathcal{F}$ be a free factor system of $G$, such that $\text{rk}_K(G,\mathcal{F})\ge 2$. Then $VSL(G,\mathcal{F})\smallsetminus \mathbb{P}\mathcal{O}(G,\mathcal{F})$ has topological dimension $3\text{rk}_f(G,\mathcal{F})+2|\mathcal{F}|-5$.
\end{theo}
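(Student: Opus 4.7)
The plan is to adapt Gaboriau and Levitt's argument from \cite{GL95} to the free-product setting, using the geometric approximation results of Section \ref{sec-geom}. Since Theorem \ref{dimension} already yields $\dim VSL(G,\mathcal{F}) = 3N + 2k - 4$ and open Grushko simplices realize this maximal dimension inside the open subspace $\mathbb{P}\mathcal{O}(G,\mathcal{F})$, the content of Theorem \ref{dimension-2} is that the complement has exactly one fewer dimension.

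For the lower bound I would exhibit an explicit open simplex of dimension $3N+2k-5$ inside $VSL(G,\mathcal{F})\setminus\mathbb{P}\mathcal{O}(G,\mathcal{F})$. Assuming $N\ge 1$ (the cases $N=0$ have empty simplicial boundary and must be handled separately by non-simplicial constructions, but are consistent with the stated formula), fix an element $c\in G$ that is non-peripheral and not a proper power, and consider the $(G,\mathcal{F})$-graph of groups $\mathcal{G}$ whose underlying graph has one valence-one vertex carrying the cyclic group $\langle c\rangle$, the $k$ valence-one vertices carrying the peripheral groups $G_1,\dots,G_k$, and $k+2N-3$ trivalent vertices carrying the trivial group, with all edge groups trivial. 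The requirement $\pi_1(\mathcal{G}) = G$ forces the underlying graph to have rank $N - 1$ (so that the extra free factor $\langle c\rangle$ is absorbed, yielding $E - V + 1 = N - 1$), and together with the trivalence count $2E = 3(k+2N-3) + k + 1$ this yields $E = 3N+2k-4$. The space of projective positive-length metrics on $\mathcal{G}$ is therefore an open simplex of dimension $3N+2k-5$ of very small $(G,\mathcal{F})$-trees, each of which has a non-peripheral cyclic vertex stabilizer and hence lies outside $\mathbb{P}\mathcal{O}(G,\mathcal{F})$.

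For the upper bound I would stratify $VSL(G,\mathcal{F})\setminus\mathbb{P}\mathcal{O}(G,\mathcal{F})$ into countably many cells of dimension at most $3N+2k-5$, and conclude by countable subadditivity of topological dimension on closed sets. A simplicial boundary tree is determined by the combinatorial type of its quotient $(G,\mathcal{F})$-graph of groups together with a positive edge-length function; by the classification of stabilizers in a simplicial very small $(G,\mathcal{F})$-tree (vertex stabilizers trivial, peripheral, or non-peripheral cyclic, and edge stabilizers trivial or non-peripheral cyclic), being non-Grushko forces at least one non-peripheral cyclic structure on a vertex or an edge, and the same Euler-characteristic / trivalence accounting as in the previous paragraph bounds the number of edges of such an admissible graph by $3N+2k-4$. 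For non-simplicial boundary trees I would use Theorem \ref{approx-by-geom-2} to approximate by geometric trees, interpret the latter as finite systems of isometries with independent generators (Lemma \ref{lemma-genind}), and invoke the bounds on the number of orbits of branch points and on Kurosh ranks of point stabilizers from the proof of Theorem \ref{dimension} to parametrize this stratum by cells of dimension at most $3N+2k-5$. The main obstacle is this last step: one must keep track of how the continuous parameters of the surface-type and Levitt minimal components of each geometric decomposition contribute, and verify that together with the simplicial part they always leave at least one dimension of slack compared to the maximal Grushko simplex; the new feature in the $(G,\mathcal{F})$ setting is the rigidity coming from the requirement that each $G_i$ fix a unique point, which imposes extra constraints at peripheral components and provides the needed codimension.
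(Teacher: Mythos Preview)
Your lower bound is essentially right and close in spirit to the paper's: both produce an explicit $(3N+2k-5)$-simplex of simplicial boundary trees. One correction: the case $N=0$ does \emph{not} have empty simplicial boundary when $k\ge 3$. The paper's construction (merge two peripheral vertices $G_1,G_2$ into a single vertex with group $G_1\ast G_2$, then attach an edge with nontrivial cyclic stabilizer generated by a nonperipheral element of $G_1\ast G_2$) works uniformly, including for $N=0$, whereas your construction needs a free $\mathbb{Z}$ factor and so genuinely fails there.

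Your upper bound strategy, however, diverges from the paper and has a real gap. You propose a direct stratification and cell-by-cell dimension count, with the non-simplicial strata handled by ``parametrizing minimal components'' after geometric approximation. This last step is where the difficulty actually lives, and your sketch does not supply a mechanism for it: you would need, for every non-simplicial very small boundary tree, a neighborhood (or stratum) explicitly parametrized by at most $3N+2k-5$ real parameters, and the decomposition into surface-type and exotic minimal components does not readily yield such a count. The remark that peripheral rigidity ``provides the needed codimension'' is not a proof: that rigidity is already baked into the $3N+2k-4$ count for the interior and gives nothing extra at the boundary.

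The paper avoids this entirely by working with the $\mathbb{Q}$-rank $r_{\mathbb{Q}}(T)$ of the length function. The key input is Proposition~\ref{rank-bnd}: every very small $(G,\mathcal{F})$-tree $T\notin\mathcal{O}(G,\mathcal{F})$ satisfies $r_{\mathbb{Q}}(T)\le 3N+2k-4$. Combined with the general fact (Proposition~\ref{rank-ccl}) that the space of projective length functions with $\mathbb{Q}$-rank $\le k$ has dimension $\le k-1$, this immediately gives the upper bound $3N+2k-5$. The proof of Proposition~\ref{rank-bnd} treats geometric and nongeometric trees separately: nongeometric trees use the strict inequality in Proposition~\ref{rank-non-geom}, while geometric boundary trees are handled via an index analysis (all branch points have index~$1$) together with the length identity~\eqref{eq-independent} coming from independent generators, which produces a nontrivial linear relation among the generators of $\Lambda/L$. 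This $\mathbb{Q}$-rank route is what you are missing; your stratification idea could perhaps be made to work, but it would amount to reproving these rank bounds in disguise.
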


The proof of Theorem \ref{dimension} will be carried in the next two sections. Additional arguments for getting the dimension of the boundary (Theorem \ref{dimension-2}) will be given in Section~\ref{sec-add}.

\subsection{The index of a small $(G,\mathcal{F})$-tree}

Let $T$ be a small $(G,\mathcal{F})$-tree, and let $x\in T$. The Kurosh decomposition of the stabilizer $\text{Stab}(x)$ reads as $$\text{Stab}(x)=g_{1}G_{i_1}g_{1}^{-1}\ast\dots\ast g_{r}G_{i_r}g_{r}^{-1}\ast F.$$ We claim that the groups $G_{i_1},\dots,G_{i_r}$ are pairwise non conjugate in $G$, which implies in particular that there are only finitely many free factors arising in the Kurosh decomposition of $\text{Stab}(x)$. Indeed, otherwise, we could find $i\in\{1,\dots,k\}$, and $g\in G\smallsetminus\text{Stab}(x)$, such that both $G_i$ and $gG_ig^{-1}$ fix $x$. This would imply that $G_i$ fixes both $x$ and $g^{-1}x$, and therefore $g^{-1}x=x$ because no arc of $T$ is fixed by a peripheral element. Hence $G_i$ and $gG_ig^{-1}$ are conjugate in $\text{Stab}(x)$, a contradiction. 

Notice that the free group $F$ might \emph{a priori} not be finitely generated (though it will actually follow from Corollary \ref{bound-branch} that it is). We define the \emph{index} of $x$ as $$i(x)=2~ \text{rk}_K(\text{Stab}(x))+v_1(x)-2,$$ where $v_1(x)$ denotes the number of $\text{Stab}(x)$-orbits of directions from $x$ in $T$ with trivial stabilizer. A point $x\in T$ is a \emph{branch point} if $T\smallsetminus\{x\}$ has at least $3$ connected components. It is an \emph{inversion point} if $T\smallsetminus\{x\}$ has $2$ connected components, and some element $g\in G$ fixes $x$ and permutes the two directions at $x$. The following proposition is a generalization of \cite[Proposition III.1]{GL95}.

\begin{prop}\label{index-branch}
For all small minimal $(G,\mathcal{F})$-trees $T$ and all $x\in T$, we have $i(x)\ge 0$. If $T$ is very small, then $i(x)>0$ if and only if $x$ is a branch point or an inversion point.
\end{prop}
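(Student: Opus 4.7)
Set $H := \text{Stab}(x)$ and write its Kurosh decomposition
\[
H = h_1 G_{i_1} h_1^{-1} \ast \dots \ast h_r G_{i_r} h_r^{-1} \ast F',
\]
with pairwise distinct indices $i_j$ (as in the paragraph preceding the statement), so that $\text{rk}_K(H) = r + \text{rk}(F')$. The plan is to exploit two elementary consequences of the very small hypothesis: (P1) if $g \in H$ preserves a direction $d$ at $x$, then for $y \in d$ we have $gy \in d$ with $d_T(x, gy) = d_T(x, y)$, forcing $gy = y$, so $g$ fixes an initial arc of $d$ pointwise; in particular direction stabilizers are either trivial or cyclic and non-peripheral; (P2) every non-trivial peripheral element fixing $x$ permutes directions without fixed points, because fixing a direction would yield fixing an arc, contradicting that arc stabilizers are non-peripheral. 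I will also use the facts that finite-order elements of $G$ are conjugate into some $G_i$ (by Kurosh), and that root-closedness of arc stabilizers prevents an infinite-order element from inverting directions.

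The inequality $i(x) \ge 0$ in the small case is then essentially immediate. If $H \ne 1$, then $\text{rk}_K(H) \ge 1$ and $i(x) \ge v_1(x) \ge 0$. If $H = 1$, then $v_1(x)$ equals the total number of directions at $x$, and minimality of the non-degenerate tree $T$ forces this number to be at least $2$: the subtree obtained by removing the $G$-orbit of all leaves is $G$-invariant, hence by minimality either empty or equal to $T$, so $T$ has no leaf.

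For the equivalence in the very small case, first suppose $x$ is regular (two directions $d_1, d_2$ not permuted by any element). Then $H$ preserves both directions, so by (P1)--(P2) the subgroup $H$ contains no non-trivial peripheral element, and each element of $H$ fixes an arc through $x$; hence $H$ is either trivial (giving $v_1(x) = 2$) or cyclic non-peripheral (giving $v_1(x) = 0$), and in both cases $i(x) = 0$. Conversely, suppose $x$ is a branch point, i.e.\ has at least three directions. If $\text{rk}_K(H) \ge 2$, then $i(x) \ge 2$ immediately. If $H = 1$, then $v_1(x) \ge 3$ and $i(x) \ge 1$. If $H \cong \mathbb{Z}$ is non-peripheral and generated by $g$, then any finite $g$-orbit of directions is fixed by some power of $g$: a common power fixing three directions would fix a tripod, which is forbidden; while an infinite $g$-orbit has trivial stabilizer, contributing $v_1(x) \ge 1$ and $i(x) \ge 1$. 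If $H$ is peripheral, (P2) forces every orbit to have trivial stabilizer, so $v_1(x) \ge 1$ and $i(x) \ge 1$. Finally, if $x$ is an inversion point and $g \in H$ swaps its two directions, then $g$ must be torsion: otherwise $g^2$ would fix the arc between the two directions, and root-closedness of arc stabilizers would yield $g$ fixing the arc too, contradicting inversion. Hence $g$ is peripheral, $H$ has a peripheral Kurosh factor ($r \ge 1$), the unique direction orbit has trivial stabilizer by (P2), and $i(x) \ge 1$.

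The main difficulty is the branch-point analysis when $H \cong \mathbb{Z}$ is non-peripheral, which requires combining all three axioms of very small trees (cyclic arc stabilizers, root-closedness, and trivial tripod stabilizers) with a careful bookkeeping of the orbits of the generator on directions. Root-closedness of arc stabilizers is also essential in the inversion analysis, a feature not visible in the free group case of \cite{GL95}: in a very small $(G,\mathcal{F})$-tree, only torsion elements (which are automatically peripheral) can invert directions.
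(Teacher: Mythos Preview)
Your proof is correct and rests on the same ingredients as the paper's—cyclic non-peripheral arc stabilizers, root-closure, and trivial tripod stabilizers—but is organized by the type of $x$ (regular/branch/inversion) rather than by $\text{rk}_K(\text{Stab}(x))\in\{0,1,\ge 2\}$ as the paper does. Your inversion analysis is more explicit than the paper's: you show directly that a swapping element must be torsion, hence peripheral, whereas the paper simply asserts that a valence-$2$ point with $\text{rk}_K(H)=1$ and $v_1(x)>0$ is an inversion point, leaving the underlying reason implicit. Two small remarks. First, the justification of (P1) as written (``forcing $gy=y$'') is only immediate when $[x,y]$ contains no interior branch point; the conclusion that $g$ fixes an initial arc of $d$ is nonetheless true in general (one checks that the divergence point of $[x,y]$ and $[x,gy]$ is fixed by $g$). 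Second, in the inversion case, the claim that the direction orbit has trivial stabilizer ``by (P2)'' is only justified once you observe that $\text{rk}_K(H)=1$ forces $H$ itself to be peripheral (it contains a torsion element, so cannot be the non-peripheral $\mathbb{Z}$ alternative); if instead $\text{rk}_K(H)\ge 2$ the stabilizer could be non-trivial, but then $i(x)\ge 2$ regardless and nothing is lost.
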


\begin{proof}
If $\text{rk}_K(\text{Stab}(x))\ge 2$, then we have $i(x)\ge 2$, and in this case $x$ is a branch point. If $\text{Stab}(x)$ is trivial, then $i(x)=v_1(x)-2$, where $v_1(x)$ is the number of connected components of $T\smallsetminus\{x\}$, which is nonnegative because $T$ is minimal, and $i(x)>0$ if and only if $x$ is a branch point. Finally, if $\text{rk}_K(\text{Stab}(x))=1$, then $i(x)=v_1(x)\ge 0$. If $i(x)>0$, then either $x$ is a branch point as in the first case, or $x$ has valence $2$ and is therefore an inversion point. If $i(x)=0$, and $T$ is very small, the stabilizer of any direction from $x$ is isomorphic to $\text{Stab}(x)$. As tripod stabilizers are trivial in $T$, this implies that $x$ is not a branch point. 
\end{proof}

Let $T$ be a small $(G,\mathcal{F})$-tree, and let $x,x'\in T$. If $x$ and $x'$ belong to the same $G$-orbit, then $i(x)=i(x')$. Given a $G$-orbit $\mathcal{O}$ of points in $T$, we can thus define $i(\mathcal{O})$ to be equal to $i(x)$ for any $x\in\mathcal{O}$. We then let $$i(T):=\sum_{\mathcal{O}\in T/G}i(\mathcal{O}).$$

We now extend \cite[Theorem III.2]{GL95} and its corollaries \cite[Corollaries III.3 and III.4]{GL95} to the context of $(G,\mathcal{F})$-trees. The converse of the second statement of Proposition \ref{bound-index} will be proved in Proposition \ref{index-non-geom} below.

\begin{prop}\label{bound-index}
For all small $(G,\mathcal{F})$-trees $T$, we have $i(T)\le 2\text{rk}_K(G,\mathcal{F})-2$. If $T$ is geometric, then $i(T)=2\text{rk}_K(G,\mathcal{F})-2$.
\end{prop}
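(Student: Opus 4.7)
The plan is to prove Proposition \ref{bound-index} in two stages: first the exact equality for geometric $(G,\mathcal{F})$-trees, and then the inequality for general small trees by approximation.

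For the geometric case $T = T_{K}$, I would use the associated band complex $\Sigma$ and the system of isometries on the finite forest $F = X/G$ from Section \ref{sec-systems}. Following the strategy of \cite[Theorem III.2]{GL95}, apply the Rips--Imanishi--Guirardel decomposition to $T_K$, writing it as a graph of actions whose vertex trees are either simplicial or indecomposable (of axial, surface, or thin type). Each indecomposable component contributes a computable amount to $i(T)$, expressible as $-2$ times the Euler characteristic of an appropriate subcomplex of $\Sigma/G$. The simplicial part of the skeleton contributes via the Kurosh ranks of its vertex stabilizers and the number of directions. Summing all contributions and using the additivity of Kurosh rank along the Bass--Serre decomposition of $G$ dictated by the skeleton of the transverse covering yields the exact value $2\text{rk}_K(G,\mathcal{F}) - 2$.

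For the general case, I apply Theorem \ref{approx-by-geom-2} to obtain a sequence $(T_n)_{n\in\mathbb{N}}$ of small geometric $(G,\mathcal{F})$-trees strongly converging to $T$, with resolution morphisms $j_n : T_n \to T$. The key auxiliary lemma is the following monotonicity: for any surjective $G$-equivariant morphism $f : T' \to T$ between minimal small $(G,\mathcal{F})$-trees, one has $i(T) \le i(T')$. To establish this, fix a branch or inversion point $x \in T$; the preimage $f^{-1}(x)$ is a $\mathrm{Stab}(x)$-invariant closed subtree of $T'$ whose branch structure and stabilizers can be controlled. Each direction at $x$ with trivial stabilizer lifts to at least one non-collapsed direction at some point of $f^{-1}(x)$, and the Kurosh rank of $\mathrm{Stab}(x)$ is dominated by an appropriate sum over orbits of points in $f^{-1}(x)$. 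Careful bookkeeping, analogous to \cite[Corollary III.3]{GL95}, shows that the index contributions in $T'$ over the orbit of $x$ sum to at least $i(x)/|G \cdot x|$, which globally yields $i(T) \le i(T')$. Applying this to $f = j_n$ and invoking the equality from the geometric case gives $i(T) \le 2\text{rk}_K(G,\mathcal{F}) - 2$.

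The main obstacle is the geometric case: one must carry out an Euler-characteristic computation that correctly accounts for the peripheral subgroups $G_i$. Gaboriau--Levitt's original argument treats only trivial vertex groups, so every occurrence of ``rank'' must be systematically replaced by ``Kurosh rank,'' and the peripheral vertices in the skeleton of $T_K$ must be verified to contribute exactly the expected additional $2|\mathcal{F}|$ term (via the fact that a peripheral vertex group $G_i$, if elliptic in a simplicial arc, contributes Kurosh rank $1$ but no free-rank, hence only the ``$2$'' rather than ``$2\cdot \mathrm{rk}$''). A secondary technical point is making the index-monotonicity lemma work when $f^{-1}(x)$ itself contains branch points, inversion points, or elliptic peripheral stabilizers; this requires choosing a fundamental domain carefully and invoking the non-peripherality of arc stabilizers in a small $(G,\mathcal{F})$-tree.
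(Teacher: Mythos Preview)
Your overall two-stage strategy matches the paper's, but both stages diverge in execution, and the second contains a concrete error.

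\textbf{Geometric case.} The paper does not pass through any dynamical decomposition. Instead, for each $G$-orbit $\mathcal{O}$ in $T_K$ it builds two graphs directly from the band complex: a graph $\mathcal{S}$ whose vertices are the points of $\mathcal{O}\cap F$ and whose edges record the partial morphisms in $\Phi$, and a graph $\mathcal{S}'$ playing the same role for directions. Two short lemmas identify $\text{rk}_K(\mathcal{S})$ with $\text{rk}_K(\text{Stab}(x))$ and the components of $\mathcal{S}'$ with $\text{Stab}(x)$-orbits of directions at $x$. A pure Euler-characteristic count on $\mathcal{S}$ and $\mathcal{S}'$ then yields an explicit local formula for $i(\mathcal{O})$ (Equation~\eqref{eq-index}), and summing over all orbits using $\sum_{x\in K}(v_K(x)-2)=-2$ gives the exact value. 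This is more elementary than your route, and there is a genuine circularity risk in yours: the refinements of the Imanishi decomposition you invoke (triviality of arc stabilizers in dense-orbit pieces, exclusion of axial components) depend on knowing that geometric trees have finitely many orbits of directions, which in this paper is obtained precisely as a byproduct of the present proof (Corollary~\ref{directions}). Note also that the explicit formula~\eqref{eq-index} is reused later in the proof of Proposition~\ref{rank-bnd}, so the paper's direct combinatorial route is not just a stylistic preference.

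\textbf{General case.} Here your auxiliary lemma has a gap. For a morphism $f:T'\to T$ in the sense of this paper (piecewise isometric on subdivided segments), the preimage $f^{-1}(x)$ is \emph{not} a closed subtree: since $f$ is nowhere constant on nondegenerate arcs, $f^{-1}(x)$ is a totally disconnected $\text{Stab}(x)$-invariant set. An element $g\in\text{Stab}(x)$ can be hyperbolic in $T'$ and fix no point of $f^{-1}(x)$ at all, so there is no reason the Kurosh rank of $\text{Stab}(x)$ is bounded by data attached to preimage points under a single morphism. The paper avoids this by exploiting \emph{strong} convergence rather than monotonicity under a fixed morphism: given $x\in T$, any finite set $Y\subseteq\text{Stab}(x)$, and finitely many directions at $x$, one simply takes $n$ large enough that all of this finite data lifts to a single point $x_n\in T_n$, giving $i(x_n)\ge s$ for every $s\le i(x)$ directly. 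No general morphism-monotonicity statement is needed, and lifts of points in distinct orbits remain in distinct orbits.
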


Using Proposition \ref{index-branch}, we get the following result as a corollary of Proposition \ref{bound-index}.

\begin{cor}\label{bound-branch}
Any very small $(G,\mathcal{F})$-tree has at most $2 \text{~rk}_K(G,\mathcal{F})-2$ orbits of branch or inversion points, and the Kurosh rank of the stabilizer of any $x\in T$ is at most equal to $\text{rk}_K(G,\mathcal{F})$.
\qed
\end{cor}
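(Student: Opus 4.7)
The plan is to derive Corollary \ref{bound-branch} as an essentially immediate consequence of Propositions \ref{bound-index} and \ref{index-branch}. The key identity is the definition $i(T) = \sum_{\mathcal{O} \in T/G} i(\mathcal{O})$, together with the fact that every summand is a nonnegative integer in a very small $(G,\mathcal{F})$-tree (this nonnegativity is exactly Proposition \ref{index-branch}). Since Proposition \ref{bound-index} provides the global bound $i(T) \le 2\,\mathrm{rk}_K(G,\mathcal{F}) - 2$, all the corollary's estimates will fall out by isolating the relevant summands.

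For the first statement, I would invoke the characterization in Proposition \ref{index-branch}: in a very small tree, a point $x$ is a branch point or an inversion point exactly when $i(x) > 0$, and in that case $i(x) \ge 1$ because $i(x)$ is an integer (it equals $2\,\mathrm{rk}_K(\mathrm{Stab}(x)) + v_1(x) - 2$). Thus each orbit of branch points or inversion points contributes at least $1$ to the sum $i(T)$, while every other orbit contributes a nonnegative integer, so the total number of such orbits is bounded above by $i(T) \le 2\,\mathrm{rk}_K(G,\mathcal{F}) - 2$.

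For the second statement, I would fix any $x \in T$ and note that its orbit contributes $i(x) = 2\,\mathrm{rk}_K(\mathrm{Stab}(x)) + v_1(x) - 2$ to $i(T)$, while all other orbits contribute nonnegatively by Proposition \ref{index-branch}. Hence
\[
2\,\mathrm{rk}_K(\mathrm{Stab}(x)) - 2 \;\le\; i(x) \;\le\; i(T) \;\le\; 2\,\mathrm{rk}_K(G,\mathcal{F}) - 2,
\]
which rearranges to $\mathrm{rk}_K(\mathrm{Stab}(x)) \le \mathrm{rk}_K(G,\mathcal{F})$. Note that this also shows a posteriori that $\mathrm{rk}_K(\mathrm{Stab}(x))$ is finite, justifying the remark earlier in the paper that the free part $F$ in the Kurosh decomposition of a point stabilizer is always finitely generated.

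There is essentially no obstacle here: the content lies entirely in Propositions \ref{bound-index} and \ref{index-branch}, and the corollary is a bookkeeping exercise. The only point requiring minor care is confirming that the finiteness of $\mathrm{rk}_K(\mathrm{Stab}(x))$ is not assumed but is a consequence of the bound on $i(T)$, so that the inequality above is meaningful.
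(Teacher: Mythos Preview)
Your proposal is correct and matches the paper's intended argument: the corollary is stated with a \qed and no proof, with the paper simply noting that it follows from Propositions \ref{index-branch} and \ref{bound-index}, which is precisely the bookkeeping you carry out. Your remark that finiteness of $\text{rk}_K(\text{Stab}(x))$ is a consequence rather than an assumption is also in line with the paper's own comment preceding the definition of the index.
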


\begin{proof}[Proof of Proposition \ref{bound-index}]
First assume that $T$ is geometric. For all $i\in\{1,\dots,k\}$, we let $x_i\in T$ be the fixed point of $G_i$. Let $K\subseteq T$ be a finite subtree containing all points $x_i$, and such that $T=T_{K}$. Let $\mathcal{K}=(F,\Phi)$ be the associated system of morphisms on the compact forest $F:=X/G$ (see Section \ref{sec-systems}). We fix a $G$-orbit of points $\mathcal{O}\subset T$. We will associate to $\mathcal{O}$ two graphs $\mathcal{S}$ and $\mathcal{S}'$ in the following way.  

Vertices of $\mathcal{S}$ are the points in $\mathcal{O}\cap F$ (with a slight abuse of notations, as we are actually considering the image in $F$ of the set of points in $X$ that correspond to points in $\mathcal{O}$). There is an edge $e$ from $z$ to $z'$ whenever there is a morphism in $\Phi$ that sends either $z$ to $z'$ or $z'$ to $z$. We note that Theorem \ref{pseudogroup-tree} implies that $\mathcal{S}$ is connected. The graph $\mathcal{S}$ is actually equal to the quotient space under the $G$-action of the leaf in the foliated $2$-complex $\Sigma$ passing through any point of $\mathcal{O}\cap F$. 

The copies of the points $x_i$ in the trees $K_{v_i}/G_i\subseteq F$ are called the \emph{special vertices} of $F$. We let $n(\mathcal{S})$ be the number of special vertices in $\mathcal{S}$. We define the \emph{Kurosh rank} of $\mathcal{S}$ to be $$\text{rk}_K(\mathcal{S})=1+|E(\mathcal{S})|-|V(\mathcal{S})|+n(\mathcal{S}).$$ We similarly define the Kurosh rank of any subgraph $\mathcal{G}\subseteq\mathcal{S}$.

We will now assign labels in $G$ to the edges in $\mathcal{S}$. When a morphism in $\Phi$ has both its bases in $K_{v_0}$, the corrsponding edge between $z$ and $z'$ comes equipped with a natural (oriented) label, given by the element in the base $B$ of $F_N$ corresponding to the partial isometry. If $z'$ is a nonspecial point in $K_{v_i}/G_i$, its preimage by the corresponding morphism in $\Phi$ consists of finitely many points $z_1,\dots,z_m$ which all differ by elements in $G_i$. We arbitrarily choose the label of one of the corresponding edges (say the one from $z_1$ to $z'$) to be equal to $1$, and if $g_j\in G_i$ satisfies $z_j=g_jz_1$, we label the edge from $z'$ to $z_j$ by $g_j$. We finally assign label $1$ to  every edge that contains the special vertex.  

Fix $x\in\mathcal{O}\cap F$. 

\begin{lemma}\label{lemma-index-1}
We have $\text{rk}_K(\mathcal{S})=\text{rk}_K(\text{Stab}(x))$.
\end{lemma}

\begin{proof}
By adding a vertex with vertex group $G_i$ at each special vertex contained in $\mathcal{S}$, the graph $\mathcal{S}$ has a natural structure of graph of groups, and there is a natural morphism $\rho:\pi_1(\mathcal{S},x)\to G$, sending any edge of a path in $\pi_1(\mathcal{S},x)$ to  the product of the labels it crosses. The morphism $\rho$ is injective and takes its values in $\text{Stab}(x)$, and surjectivity follows from the second assertion of Theorem \ref{pseudogroup-tree}. By definition, the Kurosh rank of $\mathcal{S}$ is also equal to the Kurosh rank of its fundamental group (as a graph of groups), so $\text{rk}_K(\mathcal{S})=\text{rk}_K(\text{Stab}(x))$. 
\end{proof}

We now define a graph $\mathcal{S}'$ by considering orbits of directions instead of orbits of points. Vertices of $\mathcal{S}'$ are the directions in $F$ from points in $\mathcal{O}\cap F$, and two directions $d$ and $d'$ are joined by an edge whenever there exists a morphism in $\Phi$ that either sends $d$ to $d'$ or $d'$ to $d$. In this way, every vertex of $\mathcal{S}$ is replaced by $v_{F}(x)$ vertices in $\mathcal{S}'$, where $v_F(x)$ denotes the valence of $x$ in $F$. Every edge $e$ in $\mathcal{S}$ joining two points $z$ and $z'$, and corresponding to a morphism $\phi$ is replaced by $v_{\phi}(e)$ edges in $\mathcal{S}'$, where $v_{\phi}(e)$ is the valence of $z$ in the domain of $\phi$. There is a natural map $\pi:\mathcal{S}'\to\mathcal{S}$ that sends vertices to vertices and edges to edges. 

\begin{lemma}\label{lemma-index-2}
The set of components of $\mathcal{S}'$ is in one-to-one correspondence with the set of $\text{Stab}(x)$-orbits of directions at $x$ in $T$. For all directions $d\in V(\mathcal{S}')$, the fundamental group of the component of $\mathcal{S}'$ that contains $d$ is isomorphic to the stabilizer of $d$, hence to $\{1\}$ or $\mathbb{Z}$. 
\end{lemma}

\begin{proof}
Let $\mathcal{S}'_1$ be a component of $\mathcal{S}'$. Then $\mathcal{S}'_1$ contains a vertex $d_0\in V(\mathcal{S}'_1)$ corresponding to a direction based at a point $y\in \mathcal{O}\cap K_{v_0}$. Applying any $g\in G$ taking $y$ to $x$, we get a direction $d:=gd_0$ from $x$ in $T$. The $\text{Stab}(x)$-orbit of $d$ only depends on the component $\mathcal{S}'_1$ (and not on the choices of $d_0$ and $g$). This defines a map $\Psi$ from the set of connected components of $\mathcal{S}'$ to the set of $\text{Stab}(x)$-orbits of directions at $x$ in $T$. 

We now prove injectivity of the map $\Psi$. Let $d_0$ and $d'_0$ be two directions in $K=K_{v_0}$ having the same $\Psi$-image, then there exists $g\in G$ mapping $d_0$ to $d'_0$. Theorem \ref{pseudogroup-tree}, applied to two nearby points defining the direction $d_0$, implies that $d_0$ and $d'_0$ belong to the same component of $\mathcal{S}'$, showing injectivity of $\Psi$.

We now show surjectivity of $\Psi$. Let $d$ be a $\text{Stab}(x)$-orbit of directions at $x$ in $T$. There exists a segment $[x,x_1]$, such that $d$ is contained in $[x,x_1]$. Then $[x,x_1]$ is contained in some translate $wK$ with $w\in G$, and $w^{-1}d\subseteq K$. This shows that $d$ belongs to the image of $\Psi$.

The proof of the second statement of the lemma is similar to the proof of Lemma~\ref{lemma-index-1}. For all $d\in V(\mathcal{S}'_1)$, there is an injective morphism $\rho':\pi_1(\mathcal{S}'_1)\to\text{Stab}(d)$, where this time $\mathcal{S}'_1$ is an actual graph and not a graph of groups because no element in $G_i$ fixes a nondegenerate arc in $T_K$. Surjectivity of $\rho'$ follows from Theorem \ref{pseudogroup-tree}, applied to two nearby points defining $d$. 
\end{proof}

We say that a $(G,\mathcal{F})$-tree $T$ has \emph{finitely many orbits of directions} if there are finitely many orbits of directions based at branch or inversion points in $T$. We define the \emph{singular set} $\text{Sing}\subseteq F$ as the subset of $F$ made of all branch points in $F$, all endpoints of the bases of the morphisms in $\Phi$, and all points $x_i$.

\begin{cor}\label{directions}
For all $x\in K$, there are only finitely many $\text{Stab}(x)$-orbits of directions at $x$ in $T$. In addition, there are finitely many orbits of branch or inversion points in $T$ (and hence finitely many orbits of directions in $T$).
\end{cor}

\begin{proof}
Let $d$ be a direction at $x$ in $T$. Then the $\text{Stab}(x)$-orbit of $d$ contains a direction in $K$ based at a point $y\in\mathcal{O}\cap K$. Since $\mathcal{S}$ is connected, there exists $g\in G$ such that $(x,v_0)$ and $(gy,gv_0)$ are joined by a leaf of $\Sigma$. By dragging $d$ along this leaf, we get that either $d$ is a direction at $x$ that is contained in $K$, or $d$ is in the $G$-orbit of a direction in $K$ at a point in the singular set $\text{Sing}$. The claim follows because $\text{Sing}$ is a finite set. Using the fact that the orbit of any point of $T$ meets $K$, the above argument also shows that the orbit of any branch or inversion point in $T$ meets $\text{Sing}$.
\end{proof}

Let $\mathcal{G}$ be a finite connected subgraph of $\mathcal{S}$ containing all vertices in $\mathcal{O}\cap \text{Sing}$ (where $\text{Sing}$ denotes the singular set) and all edges $e\in E(\mathcal{S})$ with $v_{\phi}(e)\neq 2$ (where $\phi$ is the morphism corresponding to $e$). Let $\mathcal{G}'\subseteq\mathcal{S}'$ be the $\pi$-preimage of $\mathcal{G}$ in $\mathcal{S}'$. Lemma \ref{lemma-index-2}, together with the fact that there are only finitely many $\text{Stab}(x)$-orbits of directions at any point $x\in T$ (Corollary \ref{directions}), shows that up to enlarging $\mathcal{G}$ if necessary, we may assume that $\pi_1(\mathcal{G}')$ generates the fundamental group of every component of $\mathcal{S}'$. Denote by $\mathcal{G}'_j$ the components of $\mathcal{G}'$. As the fundamental group of any finite connected graph $X$ satisfies $$1-\text{rk}(\pi_1(X))=|V(X)|-|E(X)|,$$ we have $$\sum_j (1-\text{rk}(\pi_1(\mathcal{G}'_j)))=\sum_{x\in V(\mathcal{G})}v_{F}(x)-\sum_{e\in E(\mathcal{G})}v_{\phi}(e).$$ Moreover, we have $$2\text{rk}_K(\mathcal{G})-2=-2|V(\mathcal{G})|+2| E(\mathcal{G})|+2n(\mathcal{G}).$$ Summing the above two equalities, we get $$2\text{rk}_K(\mathcal{G})-2+\sum_j (1-\text{rk}(\pi_1(\mathcal{G}'_j)))=\sum_{x\in V(\mathcal{G})}(v_F(x)-2)+\sum_{e\in E(\mathcal{G})}(2-v_{\phi}(e))+2n(\mathcal{G}).$$ We claim that $\text{rk}_K(\mathcal{G})$ is bounded independently of the choice of the finite graph $\mathcal{G}$, which implies that $\text{rk}_K(\mathcal{S})$ is finite. Indeed, Lemma \ref{lemma-index-2} implies that $1-\text{rk}(\pi_1(\mathcal{G}'_j))$ cannot be negative. In addition, the right-hand side of the equality does not depend on $\mathcal{G}$, because $v_F(x)=2$ as soon as $x\notin \text{Sing}$, and $v_{\phi}(e)=2$ for all edges of $\mathcal{S}$ that do not belong to $\mathcal{G}$. Up to enlarging $\mathcal{G}$ if necessary, we can thus assume that $\pi_1(\mathcal{G})=\pi_1(\mathcal{S})$ (as graphs of groups). This implies that $\mathcal{G}$ contains any embedded path in $\mathcal{S}$ with endpoints in $\mathcal{G}$, and therefore each component of $\mathcal{S}'$ contains only one component of $\mathcal{G}'$. Lemmas \ref{lemma-index-1} and \ref{lemma-index-2} then imply that the left-hand side of the above equality is equal to $i(\mathcal{O})$, so 
\begin{equation}\label{eq-index}
i(\mathcal{O})=\sum_{x\in V(\mathcal{S})}(v_F(x)-2)+\sum_{e\in E(\mathcal{S})}(2-v_{\phi}(e))+2|\mathcal{F}_{\mathcal{O}}|,
\end{equation}
\noindent where $|\mathcal{F}_{\mathcal{O}}|$ is the common value of $|\mathcal{F}_{|\text{Stab}(x)}|$ for all $x\in\mathcal{O}$ (we recall that $\mathcal{F}_{|\text{Stab}(x)}$ denotes the set of conjugacy classes of peripheral subgroups in the Kurosh decomposition of $\text{Stab}(x)$). We will now sum up the above equality over all orbits of points in $F$ to get an expression of the index of $T$. 
Remark that given a finite tree $K$, we have $$\sum_{x\in K}(v_K(x)-2)=-2,$$ where $v_K(x)$ denotes the valence of $x$ in $K$. Therefore, as $F$ has $1+|\mathcal{F}|$ connected components, we obtain that $$\sum_{\mathcal{O}\in T/G}\sum_{x\in V(\mathcal{S})}(v_F(x)-2)=-2-2|\mathcal{F}|.$$ We also have $$\sum_{\mathcal{O}\in T/G}2|\mathcal{F}_{\mathcal{O}}|=2|\mathcal{F}|.$$ Finally, since the number of morphisms in $\Phi$ is equal to $\text{rk}_K(G,\mathcal{F})$, we have $$\sum_{\mathcal{O}\in T/G}\sum_{e\in\mathcal{S}}(2-v_{\phi}(e))=2\text{rk}_K(G,\mathcal{F}).$$ By summing the above three contributions, we obtain $$i(T)=2\text{rk}_K(G,\mathcal{F})-2,$$ and we are done in the geometric case. 
\\
\\
\indent We now turn to the general case, where $T$ need no longer be geometric. Let $(K_n)_{n\in\mathbb{N}}$ be a sequence of finite subtrees of $T$ such that the corresponding geometric $(G,\mathcal{F})$-trees $T_{n}$ strongly converge to $T$ (we recall the definition of strong convergence from the paragraph preceding Theorem \ref{approx-by-geom}). Let $x\in T$ be a branch or inversion point, and let $s\le i(x)$ be an integer. As $T$ is a $(G,\mathcal{F})$-tree, the Kurosh decomposition of $\text{Stab}(x)$ reads as $$\text{Stab}(x)=g_{1}G_{i_1}g_{1}^{-1}\ast\dots\ast g_{r}G_{i_r}g_{r}^{-1}\ast F,$$ where $G_{i_1},\dots,G_{i_r}$ are pairwise non conjugate in $G$, and $F$ is a free group (which might \emph{a priori} not be finitely generated). Let $Y$ be a finite subset of $\text{Stab}(x)$ made of elements from a free basis of $F$ and one nontrivial element in each of the subgroups $g_{j}G_{i_j}g_{j}^{-1}$. Let $d_1,\dots,d_q$ be directions at $x$ in $T$ with trivial stabilizers, in distinct $\text{Stab}(x)$-orbits. We make these choices in such a way that $2|Y|+q-2=s$. Because of strong convergence, it is possible for $n$ large enough to lift $x$ to an element $x_n\in T_n$ in such a way that all elements in $Y$ fix $x_n$, and we can similarly lift all directions $d_i$ to a direction from $x_n$ in $T_n$. We have $v_1(x_n)\ge q$, and the resolution morphism from $T_n$ to $T$ provided by Theorem \ref{pseudogroup-tree} induces an injective morphism from $\text{Stab}(x_n)$ to $\text{Stab}(x)$, whose image contains all elements in $Y$, and all subgroups $g_{j}G_{i_j}g_{j}^{-1}$ because $T_n$ is a $(G,\mathcal{F})$-tree. As the subgroup generated by $Y$ and the collection of subgroups of the form $g_{j}G_{i_j}g_{j}^{-1}$ is a free factor, we get that $\text{rk}_K(\text{Stab}(x))\ge |Y|$. Hence $i(x_n)\ge s$. As this is true for all $s\le i(x)$, we get that $i(x)\le i(x_n)$. Since lifts to $T_n$ of branch or inversion points in distinct $G$-orbits in $T$ belong to distinct $G$-orbits of $T_n$, it follows from the first part of the argument that $i(T)\le 2\text{~rk}_K(G,\mathcal{F})-2$.
\end{proof}

\subsection{Bounding $\mathbb{Q}$-ranks, and the dimension of $VSL(G,\mathcal{F})$}

We now compute the dimension of $VSL(G,\mathcal{F})$, following the arguments in \cite[Sections IV and V]{GL95}. Let $T$ be a minimal, small $(G,\mathcal{F})$-tree. We denote by $L$ the additive subgroup of $\mathbb{R}$ generated by the values of the translation lengths $||g||_T$, for $g$ varying in $G$. The \emph{$\mathbb{Z}$-rank} $r_{\mathbb{Z}}(T)$ is the rank of the abelian group $L$, i.e. the minimal number of elements in a generating set of $L$ (it is infinite if $L$ is not finitely generated). The \emph{$\mathbb{Q}$-rank} $r_{\mathbb{Q}}(T)$ is defined to be the dimension of the $\mathbb{Q}$-vector space $L\otimes_{\mathbb{Z}}\mathbb{Q}$. Notice that we always have $r_{\mathbb{Q}}(T)\le r_{\mathbb{Z}}(T)$. Let $Y$ be the set of points in $T$ which are either branch points or inversion points. We define $\Lambda$ as the subgroup of $\mathbb{R}$ generated by distances between points in $Y$. We have $2\Lambda\subseteq L\subseteq \Lambda$, see \cite[Section IV]{GL95}. The following two propositions were stated by Gaboriau and Levitt in the case of nonabelian actions of finitely generated groups on $\mathbb{R}$-trees without inversions. Their proofs adapt to our framework.

\begin{prop}\label{rank-1} (Gaboriau--Levitt \cite[Proposition IV.1]{GL95})
Let $T$ be a small $(G,\mathcal{F})$-tree, and let $\{g_1,\dots,g_N\}$ be a free basis of $F_N$. Then the set $\{||g_i||_T\}_{i\in\{1,\dots,N\}}$ generates $L/2\Lambda$.
\end{prop}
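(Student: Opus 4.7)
The plan is to adapt Gaboriau--Levitt's argument \cite[Proposition IV.1]{GL95} to the setting of free products of groups. The core identity to establish is that for every $g\in G$, the translation length $\|g\|_T$ is congruent modulo $2\Lambda$ to an integral combination of the $\|g_j\|_T$.

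First, I would fix a basepoint $x_0\in T$ chosen to be a branch or inversion point of $T$, so that any distance from $x_0$ to another branch or inversion point automatically lies in $\Lambda$. (If $T$ has no branch or inversion point at all, then $T$ is a line and the proposition follows directly, as $\Lambda$ is then trivially compatible with the translation length function on a line.) Given $g\in G$, I would then write it as a reduced word $g=s_1\cdots s_k$ in which each letter is either some $g_j^{\pm 1}$ or a nontrivial element of one of the peripheral groups~$G_j$.

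Next, I would establish three congruences modulo $2\Lambda$. First, $\|g\|_T \equiv d(x_0, gx_0) \pmod{2\Lambda}$: when $g$ is hyperbolic in $T$, use the tree identity $d(x_0, gx_0) = \|g\|_T + 2d(x_0, A_g)$, and observe that the projection of the branch point $x_0$ onto the axis $A_g$ is itself a branch point, so $d(x_0, A_g)\in\Lambda$; when $g$ is elliptic, $\|g\|_T = 0$ and $d(x_0, gx_0) = 2d(x_0, \mathrm{Fix}(g))\in 2\Lambda$ by an analogous projection argument. Second, by telescoping the path from $x_0$ to $gx_0$ through the intermediate translates $y_i := s_1\cdots s_i\cdot x_0$, one gets
\[ d(x_0, gx_0) \equiv \sum_{i=1}^k d(x_0, s_i x_0) \pmod{2\Lambda}, \]
since every backtracking correction is twice the distance between two points lying in the $G$-orbit of~$x_0$, hence in $2\Lambda$. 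Third, I would compute each $d(x_0, s_i x_0)$ modulo $2\Lambda$: if $s_i\in G_j$ is peripheral and fixes the unique peripheral point $x_{(j)}\in T$, then $d(x_0, s_i x_0) = 2d(x_0, x_{(j)}) - 2d(x_{(j)}, p)$, where $p$ is the center of the tripod $(x_0, x_{(j)}, s_i x_0)$, and both of these distances lie in $\Lambda$ (once one checks that $x_{(j)}$ is indeed a branch or inversion point), so $d(x_0, s_i x_0)\in 2\Lambda$; if $s_i = g_j^{\pm 1}$, then the same argument as in the first congruence yields $d(x_0, s_i x_0) \equiv \|g_j\|_T \pmod{2\Lambda}$.

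Combining the three congruences gives $\|g\|_T \equiv \sum_{j=1}^N n_j \|g_j\|_T \pmod{2\Lambda}$, where $n_j$ counts the number of occurrences of $g_j^{\pm 1}$ in the reduced expression for $g$, which is exactly what the proposition asks. The main obstacle is a careful verification that all auxiliary points arising in the proof---projections onto axes, bridge points of tripods, and fixed points of peripheral elements---actually lie in the union $Y$ of branch and inversion points, so that every auxiliary distance introduced belongs to $\Lambda$. This is where the hypothesis that $T$ is small is used crucially, and where I expect some case analysis to handle degenerate configurations, such as trivial peripheral factors, peripheral fixed points of valence $\leq 2$ whose orbits require replacement by genuine branch points, and the line case mentioned above.
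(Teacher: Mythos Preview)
Your proposal is correct and follows essentially the same route as the paper. The paper does not give an independent proof but refers to \cite[Proposition IV.1]{GL95} and records the key identities as Lemma~\ref{lemma-trees}: $d(p,gp)\equiv\|g\|_T$ and $\|gh\|_T\equiv\|g\|_T+\|h\|_T$ modulo $2\Lambda$ for branch or inversion points $p$; once the latter is known, the map $g\mapsto\|g\|_T$ is a homomorphism $G\to L/2\Lambda$, peripheral elements map to $0$ since they are elliptic, and the conclusion is immediate. Your three congruences amount to reproving these identities, and your final paragraph correctly identifies the one place where care is needed, namely checking that the auxiliary projection and bridge points are branch or inversion points of $T$.
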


\begin{prop} (Gaboriau--Levitt \cite[Proposition IV.1]{GL95})\label{rank-2}
Let $T$ be a small $(G,\mathcal{F})$-tree, and let $\{p_j\}_{j\in J}$ be a set of representatives of the $G$-orbits of branch and inversion points in $T$. Then for all $j_0\in J$, the set $\{d_T(p_{j_0},p_j)\}_{j\in J\smallsetminus\{j_0\}}$ generates $\Lambda/L$.
\end{prop}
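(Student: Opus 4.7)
The plan is to adapt the argument of Gaboriau--Levitt, which rests on two basic facts about the $\mathbb{R}$-tree $T$: one concerning how a branch/inversion point is displaced by an element of $G$, and one concerning the geometry of tripods with branch/inversion vertices.

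The first step I would establish is that for any branch or inversion point $p\in T$ and any $g\in G$, the distance $d_T(p,gp)$ lies in $L$. The idea is to use the standard formula
\[d_T(p,gp)=\|g\|_T+2\,d_T(p,C_g),\]
where $C_g$ is the characteristic subtree of $g$ (its axis when $g$ is hyperbolic, its fixed point set when $g$ is elliptic or inverting). Since $\|g\|_T\in L$ and $2\Lambda\subseteq L$, it suffices to argue that the nearest-point projection $m$ of $p$ to $C_g$ is itself a branch or inversion point, so that $d_T(p,m)\in\Lambda$. This will be checked case by case: if $g$ is hyperbolic or elliptic with a nondegenerate fixed subtree, then $m$ gains the direction toward $p$ in addition to two directions along $C_g$; if $g$ fixes a single point $q_0$, then $g$ acts without fixing any direction at $q_0$, forcing $q_0$ to be branch or inversion. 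The peripheral subcase uses that in a minimal $(G,\mathcal{F})$-tree the fixed point $x_i$ of any nontrivial $G_i$ carries at least two directions freely permuted by $G_i$, and is therefore branch or inversion.

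The second step I would establish is the tripod identity modulo $L$: for any three branch or inversion points $a,b,c\in T$, the center $m$ of the tripod they span is either one of $a,b,c$ or a genuine branch point (it has three separating directions). From
\[d_T(a,c)=d_T(a,b)+d_T(b,c)-2\,d_T(b,m)\]
and $d_T(b,m)\in\Lambda$, one deduces
\[d_T(a,c)\equiv d_T(a,b)+d_T(b,c)\pmod{2\Lambda}\subseteq L.\]

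With these two ingredients in hand the conclusion is a short computation. Given any two branch or inversion points $x=gp_j$ and $y=hp_{j'}$, the tripod identity applied to $(p_{j_0},gp_{j_0},gp_j)$ together with $G$-equivariance of $d_T$ and the first step give $d_T(p_{j_0},x)\equiv d_T(p_{j_0},p_j)\pmod L$, and likewise for $y$. A second application of the tripod identity to $(x,p_{j_0},y)$ then yields
\[d_T(x,y)\equiv d_T(p_{j_0},p_j)+d_T(p_{j_0},p_{j'})\pmod L.\]
Since $\Lambda$ is generated as an abelian group by such distances $d_T(x,y)$, this proves that the classes of the $d_T(p_{j_0},p_j)$ for $j\in J\smallsetminus\{j_0\}$ generate $\Lambda/L$.

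The only genuinely new point compared with Gaboriau--Levitt is the verification, in the first step, that peripheral fixed points in a minimal $(G,\mathcal{F})$-tree are always branch or inversion points; everything else is the same combinatorial manipulation in an $\mathbb{R}$-tree.
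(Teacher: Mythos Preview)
Your proposal is correct and follows essentially the same route as the paper, which defers to Gaboriau--Levitt via Lemma~\ref{lemma-trees}: your two ``steps'' are precisely the first two bullets of that lemma (stated mod $L$ rather than mod $2\Lambda$, which is all that is needed here), and your concluding computation is the standard deduction. One small imprecision: in the elliptic case with nondegenerate $C_g$, the projection $m$ may be an extreme point of $C_g$ and thus have only one direction into $C_g$, but your conclusion still holds because the direction toward $gp$ is then a third distinct direction at $m$.
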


We refer the reader to \cite[Proposition IV.1]{GL95} for a proof of the above two facts. We mention that these proofs are based on the following lemma, which follows from standard theory of group actions on $\mathbb{R}$-trees.

\begin{lemma} (Gaboriau--Levitt \cite[Proposition IV.1]{GL95})\label{lemma-trees}
Let $T$ be a small $(G,\mathcal{F})$-tree.
\begin{itemize}
\item For all branch or inversion points $p,q,r\in T$, we have $d(p,r)=d(p,q)+d(q,r) \text{~mod~} 2\Lambda$.
\item For all branch or inversion points $p\in T$ and all $g\in G$, we have $d(p,gp)=||g||_T \text{~mod~} 2\Lambda$.
\item For all $g,h\in G$, we have $||gh||_T=||g||_T+||h||_T \text{~mod~} 2\Lambda$.
\end{itemize}
\end{lemma}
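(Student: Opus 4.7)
The plan is to establish the three items in order: the first from elementary tree geometry, the second by a case analysis on whether $g$ is hyperbolic or elliptic, and the third by combining them. Throughout, the key observation is that $Y$ is $G$-invariant (isometries preserve valence and inversion data), so distances between $G$-translates of points in $Y$ automatically lie in $\Lambda$ by definition.

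For the first item, let $m$ be the median of three points $p, q, r \in Y$ in $T$. A standard computation yields $d(p,q) + d(q,r) - d(p,r) = 2 d(m,q)$, so it suffices to show $d(m,q) \in \Lambda$. If $m$ coincides with one of $p, q, r$, then either $d(m,q) = 0$ or else $m \in Y$ and $d(m,q)$ is the distance between two points of $Y$. Otherwise $m$ is interior to the tripod and has three distinct directions (toward $p$, $q$, $r$), hence is a branch point and lies in $Y$. In all cases $d(m,q) \in \Lambda$ and the congruence follows.

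For the second item, let $p \in Y$ and $g \in G$. If $g$ is hyperbolic with axis $A_g$, let $p'$ be the projection of $p$ onto $A_g$; the usual identity gives $d(p, gp) = ||g||_T + 2 d(p, p')$. Either $p \in A_g$ (and the conclusion is immediate), or $p'$ has the two directions along the bi-infinite axis together with an additional direction toward $p$, making it a branch point. If $g$ is elliptic (so $||g||_T = 0$) with fixed subtree $\text{Fix}(g)$, let $p''$ be the projection of $p$ onto $\text{Fix}(g)$. The directions $[p'', p]$ and $[p'', gp]$ at $p''$ are distinct, since otherwise $g$ would fix some initial arc from $p''$ toward $p$, enlarging $\text{Fix}(g)$ beyond $p''$ and contradicting the choice of projection. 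Hence the segment $[p, gp]$ passes through $p''$ and $d(p, gp) = 2 d(p, p'')$. If $\text{Fix}(g)$ extends nontrivially beyond $p''$, an additional direction into $\text{Fix}(g)$ makes $p''$ a branch point; otherwise $\text{Fix}(g) = \{p''\}$, and because $g$ has no fixed direction at $p''$ it must swap the two directions there, making $p''$ an inversion point. In every case $p'' \in Y$, so $d(p, p'') \in \Lambda$.

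For the third item, pick any $p \in Y$. Then $gp, ghp \in Y$ by $G$-invariance of $Y$. Apply the first item to the triple $(p, gp, ghp)$ and use $d(gp, ghp) = d(p, hp)$ by equivariance to obtain $d(p, ghp) \equiv d(p, gp) + d(p, hp) \pmod{2\Lambda}$. The second item rewrites the three distances as $||g||_T$, $||h||_T$, and $||gh||_T$ modulo $2\Lambda$, yielding the desired congruence. The main subtlety in the argument is verifying that the projection points in the second item lie in $Y$; the elliptic case where $\text{Fix}(g)$ collapses to a single point is precisely where inversion points enter the picture, which is exactly why $Y$ (and hence $\Lambda$) must be defined to include both branch and inversion points.
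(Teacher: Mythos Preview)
Your argument is correct and is precisely the standard derivation of these congruences. The paper does not supply its own proof of this lemma: it attributes the statement to Gaboriau--Levitt and simply remarks that it ``follows from standard theory of group actions on $\mathbb{R}$-trees,'' so there is no in-paper argument to compare against. Your median computation for the first item, the projection-to-axis/fixed-set analysis for the second, and the reduction of the third to the first two via the triple $(p,gp,ghp)$ are exactly the expected steps.

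One cosmetic point in your elliptic case for the second item: when $\text{Fix}(g)=\{p''\}$ you say $g$ ``must swap the two directions there,'' tacitly assuming $p''$ has valence exactly $2$. If $p''$ has valence at least $3$ it is already a branch point and lies in $Y$ immediately; only in the valence-$2$ case does the inversion-point argument kick in. Either way $p''\in Y$, so the conclusion is unaffected.
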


\begin{prop}\label{rank-finite} 
Let $T\in VSL(G,\mathcal{F})$ be a geometric tree, and let $b$ be the number of orbits of branch or inversion points in $T$. Then $r_{\mathbb{Z}}(T)\le \text{rk}_f(G,\mathcal{F})+b-1$.
\end{prop}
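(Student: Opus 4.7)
The plan is to bound $r_{\mathbb{Z}}(T)$ by combining Propositions \ref{rank-1} and \ref{rank-2} to produce a small generating set of $\Lambda/2\Lambda$, after first using the geometric hypothesis to show that $\Lambda$ is finitely generated.

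Since $2\Lambda\subseteq L\subseteq\Lambda$, once $\Lambda$ is known to be finitely generated, $\Lambda$ is free abelian of some finite rank $r$, the index $[\Lambda:2\Lambda]=2^{r}$ is finite, so $L$ has finite index in $\Lambda$ and the same rank, giving $r_{\mathbb{Z}}(T)=r_{\mathbb{Z}}(L)=r_{\mathbb{Z}}(\Lambda)=r$.

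To establish finite generation of $\Lambda$, I would use that $T=T_{K}$ for a finite subtree $K\subseteq T$. Let $\Sigma$ be the associated band complex and $F=X/G$ the compact forest from Section \ref{sec-systems}, with its finite singular set $\mathrm{Sing}$. By Corollary \ref{directions}, every $G$-orbit of branch or inversion points of $T$ meets $\mathrm{Sing}$, so using Lemma \ref{lemma-trees} it suffices to control finitely many distances $d_{T}(x,y)$ with $x,y\in\mathrm{Sing}$ and the translation lengths $\|g\|_T$. The explicit formula for $\delta$ in the proof of Theorem \ref{pseudogroup-tree} shows that minimizing sequences can be chosen with intermediate points in a finite set $V_{K}\subseteq K$ (adjoining to $\mathrm{Sing}$ the branch points of $K$ and the endpoints of the bases of the associated system of isometries), so every such distance is an integer combination of distances in $K$ between points of $V_{K}$. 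These are finitely many real numbers, hence $\Lambda$ lies in a finitely generated subgroup of $\mathbb{R}$ and is itself finitely generated.

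Finally, Proposition \ref{rank-1} provides a generating set of $L/2\Lambda$ of cardinality $N:=\text{rk}_f(G,\mathcal{F})$, while Proposition \ref{rank-2} provides a generating set of $\Lambda/L$ of cardinality $b-1$. Lifting and combining yields a generating set of $\Lambda/2\Lambda\cong(\mathbb{Z}/2\mathbb{Z})^{r}$ of cardinality $N+b-1$, so $r\le N+b-1$, as claimed.

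The main obstacle is the finite-generation step: without it, a finite generating set of $\Lambda/2\Lambda$ does \emph{not} bound the rank of $\Lambda$ (as the example $\Lambda=\mathbb{Z}[1/3]$, for which $\Lambda/2\Lambda\cong\mathbb{Z}/2\mathbb{Z}$ is generated by one element while $\Lambda$ is not finitely generated, illustrates). This is exactly where the geometric hypothesis—via the finiteness of the band complex and of $V_{K}$—plays its role.
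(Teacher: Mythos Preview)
Your proposal is correct and follows essentially the same approach as the paper: use the geometric hypothesis (via the finiteness of the singular set and the description of $\delta$ in the proof of Theorem~\ref{pseudogroup-tree}) to show $\Lambda$ is finitely generated, deduce $\Lambda/2\Lambda\cong(\mathbb{Z}/2\mathbb{Z})^{r_{\mathbb{Z}}(T)}$, and then bound the number of generators of $\Lambda/2\Lambda$ using Propositions~\ref{rank-1} and~\ref{rank-2}. One small remark: your invocation of Lemma~\ref{lemma-trees} in the finite-generation step is not really doing the work (its relations are only mod $2\Lambda$, which is circular at that stage); what actually establishes finite generation is your next sentence, namely the observation from the proof of Theorem~\ref{pseudogroup-tree} that minimizing $\delta$-sequences only use points from a finite subset of $K$.
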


\begin{proof}[Proof of Proposition \ref{rank-finite}]
It follows from the proof of Theorem \ref{pseudogroup-tree} that $\Lambda$ is generated by distances between points in the finite singular set $\text{Sing}$. So $\Lambda$ is finitely generated, and therefore $L$ is finitely generated (recall that $2\Lambda\subseteq L\subseteq\Lambda$). Hence $\Lambda/2\Lambda$ is isomorphic to $(\mathbb{Z}/2\mathbb{Z})^{r_{\mathbb{Z}}(T)}$, and the upper bound on $r_{\mathbb{Z}}(T)$ follows from Propositions \ref{rank-1} and \ref{rank-2}.
\end{proof}

We also recall the following result from \cite[Proposition IV.2]{GL95}. 

\begin{prop} (Gaboriau--Levitt \cite[Proposition IV.2]{GL95}) \label{rank-non-geom}
Let $T\in VSL(G,\mathcal{F})$ be a nongeometric tree obtained as the strong limit of a continuous system $T_{\mathcal{K}(t)}$ of geometric trees. Then $$r_{\mathbb{Q}}(T)\le\liminf_{t\to +\infty}r_{\mathbb{Z}}(T_{\mathcal{K}(t)}),$$ and $$r_{\mathbb{Q}}(T)<\limsup_{t\to +\infty}r_{\mathbb{Z}}(T_{\mathcal{K}(t)}).$$
\end{prop}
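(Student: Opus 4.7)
The plan is to adapt the argument of Gaboriau--Levitt \cite[Proposition IV.2]{GL95}, now that Propositions \ref{rank-1}, \ref{rank-2} and \ref{rank-finite} have been established in the $(G,\mathcal{F})$-setting and strong convergence of geometric approximations is available from Theorem \ref{approx-by-geom}.

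First I would handle the non-strict inequality. Set $r:=r_{\mathbb{Q}}(T)$ and choose $g_1,\dots,g_r\in G$ whose translation lengths $||g_i||_T$ form a $\mathbb{Q}$-basis of $L(T)\otimes_{\mathbb{Z}}\mathbb{Q}$. By definition of strong convergence, for each $i$ there is an index $t_i$ such that $||g_i||_{T_{\mathcal{K}(t)}}=||g_i||_T$ for all $t\ge t_i$. For $t\ge\max_i t_i$ the group $L(T_{\mathcal{K}(t)})\subseteq\mathbb{R}$ therefore contains $r$ rationally independent real numbers, so $r_{\mathbb{Q}}(T_{\mathcal{K}(t)})\ge r$, and a fortiori $r_{\mathbb{Z}}(T_{\mathcal{K}(t)})\ge r$. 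Taking the liminf yields the first inequality.

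For the strict inequality I would argue by contradiction, assuming $\limsup_{t\to+\infty}r_{\mathbb{Z}}(T_{\mathcal{K}(t)})=r$. Combined with the bound just obtained, one can extract a subsequence along which $r_{\mathbb{Z}}(T_{\mathcal{K}(t)})=r$. By Proposition \ref{rank-finite}, the number $b(t)$ of orbits of branch or inversion points of $T_{\mathcal{K}(t)}$ is then uniformly bounded; after a further extraction one may assume $b(t)$ is constant equal to some $b$. Using Propositions \ref{rank-1} and \ref{rank-2} together with the strong-convergence equalities, one obtains that $\Lambda(T_{\mathcal{K}(t)})$ and $L(T_{\mathcal{K}(t)})$ are eventually generated, as $\mathbb{Z}$-modules, by a fixed finite list: translation lengths of a free basis of $F_N$, together with distances between a fixed finite set of orbit representatives of branch or inversion points.

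The final step --- which I expect to be the main obstacle --- is to convert this uniform finiteness into geometricity of $T$, contradicting the hypothesis. Concretely, strong convergence allows one to lift the orbit representatives $p_j^{(t)}\in T_{\mathcal{K}(t)}$ to points of $T$ that stabilize as $t\to+\infty$; let $K\subseteq T$ be the convex hull of these limit points together with the fixed points of the $G_i$'s and of suitable translates needed to satisfy the hypotheses of Theorem \ref{pseudogroup-tree}. The system of isometries on $K$ then resolves $T_{\mathcal{K}(t)}$ for all $t$ large enough (by the fourth property of Theorem \ref{pseudogroup-tree}), and the rank equality forces the resolution morphism $T_K\to T$ to be an isometry, so $T=T_K$. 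The subtle point is to exclude the creation of new orbits of branch points in the limit that are absent from the finite approximations: this is precisely what the equality of $\mathbb{Z}$-ranks forbids through Lemma \ref{lemma-trees}, since any such new branch point would enlarge $\Lambda(T)/L(T)$ beyond its predicted rank, but a careful combinatorial bookkeeping combining the generation statements of Propositions \ref{rank-1}--\ref{rank-2} with the strong-convergence data is required to make this rigorous.
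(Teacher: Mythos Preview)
The paper does not prove this proposition; it simply records it as a quotation from \cite[Proposition~IV.2]{GL95}. So the comparison is with Gaboriau--Levitt's original argument.

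Your treatment of the first inequality is correct and is exactly their argument: strong convergence pins down finitely many translation lengths, and rational independence passes to the approximations.

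For the strict inequality there is a genuine gap. You pass to a subsequence with $r_{\mathbb{Z}}(T_{\mathcal{K}(t)})=r$ and then attempt to manufacture a single finite subtree $K\subseteq T$ with $T=T_K$, conceding that the last step is not rigorous. The difficulty is real, and your outline never invokes the one hypothesis that makes it tractable: the \emph{continuity} of the family $t\mapsto T_{\mathcal{K}(t)}$. In Gaboriau--Levitt this is the decisive ingredient. Under the contradiction hypothesis one actually has $r_{\mathbb{Z}}(T_{\mathcal{K}(t)})=r$ for \emph{all} large $t$ (the first inequality already gives $\liminf\ge r$, so no subsequence is needed). For such $t$, the length group $L(T_{\mathcal{K}(t)})$ is free abelian of rank~$r$ and contains the fixed rank-$r$ group $\Gamma:=\langle\,||g_1||_T,\dots,||g_r||_T\,\rangle$; hence $L(T_{\mathcal{K}(t)})\subseteq\Gamma\otimes\mathbb{Q}$ and $\Gamma$ has finite index in it. There are only countably many such subgroups of $\mathbb{R}$, so by connectedness of the parameter interval $L(T_{\mathcal{K}(t)})$ is locally constant, hence eventually constant. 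Since $2\Lambda(T_{\mathcal{K}(t)})\subseteq L(T_{\mathcal{K}(t)})$, the distances between singular points of the system $\mathcal{K}(t)$ take values in a fixed countable subgroup of $\mathbb{R}$ while varying continuously in $t$; they are therefore constant, the systems $\mathcal{K}(t)$ stabilise, and $T=T_{\mathcal{K}(t_0)}$ is geometric --- the desired contradiction.

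Your alternative endgame (lifting branch points and invoking Lemma~\ref{lemma-trees} to rule out ``extra'' branch points in the limit) does not obviously go through: along a mere subsequence the groups $L(T_{\mathcal{K}(t)})$ can be pairwise distinct rank-$r$ overgroups of $\Gamma$, and nothing in Propositions~\ref{rank-1}--\ref{rank-2} or Lemma~\ref{lemma-trees} prevents new orbits of branch points from appearing in $T$ without raising $r_{\mathbb{Q}}(T)$. The continuity/countability argument is what closes this door.
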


\begin{prop} \label{rank-bound}
For all $T\in VSL(G,\mathcal{F})$, we have $r_{\mathbb{Q}}(T)\le 3\text{rk}_f(G,\mathcal{F})+2|\mathcal{F}|-3$.
\end{prop}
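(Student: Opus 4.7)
The plan is to split into two cases according to whether $T$ is geometric or not, bounding $r_\mathbb{Q}$ via $r_\mathbb{Z}$ in the geometric case and via strong approximation in the nongeometric case.

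First, I would handle the case where $T \in VSL(G,\mathcal{F})$ is geometric. Letting $b$ denote the number of $G$-orbits of branch or inversion points in $T$, Corollary \ref{bound-branch} gives $b \le 2\text{rk}_K(G,\mathcal{F}) - 2 = 2\text{rk}_f(G,\mathcal{F}) + 2|\mathcal{F}| - 2$, while Proposition \ref{rank-finite} yields $r_\mathbb{Z}(T) \le \text{rk}_f(G,\mathcal{F}) + b - 1$. Combining these inequalities with the trivial bound $r_\mathbb{Q}(T) \le r_\mathbb{Z}(T)$ produces exactly the desired estimate
$$r_\mathbb{Q}(T) \le \text{rk}_f(G,\mathcal{F}) + \bigl(2\text{rk}_f(G,\mathcal{F}) + 2|\mathcal{F}| - 2\bigr) - 1 = 3\text{rk}_f(G,\mathcal{F}) + 2|\mathcal{F}| - 3.$$

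For nongeometric $T$, the strategy is to approximate by geometric trees and pass to the limit. Theorem \ref{approx-by-geom-2} supplies a sequence $(T_n)_{n\in\mathbb{N}}$ of minimal, very small, geometric $(G,\mathcal{F})$-trees that strongly converges to $T$. By the geometric case, each $T_n$ satisfies $r_\mathbb{Z}(T_n) \le 3\text{rk}_f(G,\mathcal{F}) + 2|\mathcal{F}| - 3$, and Proposition \ref{rank-non-geom} then gives $r_\mathbb{Q}(T) \le \liminf_n r_\mathbb{Z}(T_n)$, from which the conclusion follows.

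The main point requiring care is that Proposition \ref{rank-non-geom} is formulated for a continuous system of geometric trees, whereas Theorem \ref{approx-by-geom-2} only supplies a discrete strongly converging sequence. I expect this to be the only real obstacle: one needs to verify that the proof of Proposition \ref{rank-non-geom} from \cite{GL95} relies only on strong convergence and hence applies in the discrete setting too. Once that is noted, the bound is a direct consequence of Corollary \ref{bound-branch} combined with the geometric-case estimate of Proposition \ref{rank-finite}.
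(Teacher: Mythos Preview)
Your proposal is correct and follows essentially the same approach as the paper: the geometric case is handled identically via Proposition~\ref{rank-finite} and Corollary~\ref{bound-branch}, and the nongeometric case via strong approximation and Proposition~\ref{rank-non-geom}. The only difference is in how the discrete-versus-continuous issue you flag is resolved: rather than arguing that the proof of Proposition~\ref{rank-non-geom} goes through for a discrete strongly convergent sequence, the paper observes that the construction of Theorem~\ref{approx-by-geom} can itself be carried out with a \emph{continuous} increasing family of finite subtrees $K(t)\subseteq T$ (at the cost that some $T_{K(t)}$ may fail to be minimal), so that Proposition~\ref{rank-non-geom} applies as stated.
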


\begin{proof}
When $T$ is geometric, Proposition \ref{rank-finite} implies that $r_{\mathbb{Q}}(T)\le r_{\mathbb{Z}}(T)\le \text{rk}_f(G,\mathcal{F})+b-1$. Corollary \ref{bound-branch} shows that $b\le 2\text{rk}_f(G,\mathcal{F})+2|\mathcal{F}|-2$, and the claim follows. When $T$ is nongeometric, it is a strong limit of a system of geometric trees. Indeed, the construction of the approximation in Theorem \ref{approx-by-geom} can be done in a continuous way by choosing a continuous increasing family of finite trees $K(t)\subseteq T$, though some of the trees $T_{K(t)}$ may fail to be minimal in general. The claim then follows from Proposition~\ref{rank-non-geom}.
\end{proof}

\begin{prop} (Gaboriau--Levitt \cite[Proposition V.1]{GL95}) \label{rank-ccl}
Let $G$ be a countable group, let $\mathcal{F}$ be a free factor system of $G$, and let $k\ge 1$ be an integer. The space of projectivized length functions of $(G,\mathcal{F})$-trees with $\mathbb{Q}$-rank smaller than or equal to $k$ has dimension smaller than or equal to $k-1$. 
\end{prop}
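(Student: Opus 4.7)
The plan is to follow Gaboriau and Levitt's argument for the free group case \cite[Proposition V.1]{GL95} and parametrize the space $X_k := \{[T] \in \mathbb{PR}^G : r_\mathbb{Q}(T) \le k\}$ by $k$ real parameters together with a rational discrete part.

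Given $T$ with $r_\mathbb{Q}(T) \le k$, I choose a $\mathbb{Q}$-basis $\lambda_1, \dots, \lambda_d$ of $L \otimes_\mathbb{Z} \mathbb{Q}$, with $d \le k$, yielding a unique expression $||g||_T = \sum_{i=1}^d q_i(g)\lambda_i$ with $q_i(g) \in \mathbb{Q}$. For each function $\bar q = (q_1, \dots, q_d) : G \to \mathbb{Q}^d$, the set $V_{\bar q} \subseteq \mathbb{R}^G$ of functions of the form $g \mapsto \sum_i \mu_i q_i(g)$ with $\mu_i \in \mathbb{R}$ is an $\mathbb{R}$-linear subspace of dimension at most $k$, which projectivizes to a subspace of projective dimension at most $k-1$. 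Thus $X_k \subseteq \bigcup_{\bar q} \mathbb{P} V_{\bar q}$ is covered by ``low-dimensional slices'' indexed by the combinatorial type $\bar q$.

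To conclude that $\dim X_k \le k - 1$, the key observation is that although the set of $\bar q$'s is a priori uncountable, the combined parametrization $(\mu, \bar q) \mapsto (\sum_i \mu_i q_i(g))_g$ has source $\mathbb{R}^k \times (\mathbb{Q}^d)^G$: the second factor, equipped with the product topology, is a countable product of countable (hence zero-dimensional) separable metric spaces and is therefore itself zero-dimensional, so the total parameter space has topological dimension $k$. The parametrization is continuous and its image contains $X_k$; the desired bound $\dim X_k \le k-1$ after projectivizing should then follow from the fact that, on each slice, the map is linear (in particular, it does not increase dimension there), so the image is a countable-dimensional union of $k$-dimensional linear pieces, each of which contributes only $k-1$ after projectivization.

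The main obstacle I anticipate is precisely the passage from the parameter-space bound to the image-space bound: continuous maps can in general increase dimension (Peano curve phenomena), so additional rigidity is required. Unlike Gaboriau and Levitt's setting with $G = F_N$ finitely generated, where a Culler--Morgan argument reduces the analysis to a finite-dimensional projection, here one works inside the infinite-dimensional $\mathbb{PR}^G$, and a Hurewicz-type theorem about continuous maps with zero-dimensional fibers together with the linearity of each $V_{\bar q}$ seems to be the right tool. I expect the final argument to combine this with the countable sum theorem applied to a suitable $F_\sigma$-decomposition of $X_k$ obtained from an exhaustion of $G$ by finite subsets, so as to handle the uncountable indexing of the slices without actually requiring countably many of them.
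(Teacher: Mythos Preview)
The paper does not give its own proof of this proposition: it is quoted directly from Gaboriau--Levitt \cite[Proposition V.1]{GL95} and used as a black box, so there is nothing to compare your attempt against beyond the cited argument itself.

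Your outline follows the Gaboriau--Levitt strategy faithfully, and you have correctly isolated the one technical point (getting from the parametrization to a dimension bound on the image). However, one premise of your final two paragraphs is mistaken: you write that in Gaboriau--Levitt's setting ``a Culler--Morgan argument reduces the analysis to a finite-dimensional projection'', whereas here one must work in the infinite-dimensional $\mathbb{PR}^G$. But $F_N$ is already a countably infinite group, so $\mathbb{PR}^{F_N}$ is infinite-dimensional too; the argument in \cite{GL95} already takes place in an infinite product and needs no finite-dimensional reduction. Nothing about the present $(G,\mathcal{F})$ setting is harder in this respect, and the proof carries over verbatim.

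As for the gap you flag: the resolution is simpler than the exhaustion-plus-$F_\sigma$ scheme you propose. The point is that the countable decomposition should be indexed by $k$-tuples in $G$, not by rational types in $(\mathbb{Q}^k)^G$. Since $G$ is countable there are only countably many tuples $(h_1,\dots,h_k)\in G^k$, and the space in question is the union of the sets
\[
S_{(h_1,\dots,h_k)}:=\{\ell : \ell(g)\in\mathrm{span}_{\mathbb{Q}}\{\ell(h_1),\dots,\ell(h_k)\}\text{ for all }g\in G\}.
\]
On each such set the coordinate projection $\ell\mapsto(\ell(h_1),\dots,\ell(h_k))\in\mathbb{R}^k$ has zero-dimensional fibres (a fibre sits inside a countable product of countable subsets of $\mathbb{R}$), and Hurewicz's theorem yields $\dim S_{(h_1,\dots,h_k)}\le k$; the countable sum theorem then gives the bound before projectivizing. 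Your instinct to invoke Hurewicz with zero-dimensional fibres was right; what you were missing was this countable indexing, which removes the Peano-curve worry without any exhaustion argument.
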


\begin{proof}[Proof of Theorem \ref{dimension}]
Theorem \ref{dimension} follows from Propositions \ref{rank-bound} and \ref{rank-ccl}, since outer space $\mathbb{P}\mathcal{O}(G,\mathcal{F})$ contains $(3\text{rk}_f(G,\mathcal{F})+2|\mathcal{F}|-4)$-simplices, obtained for instance by varying the edge lengths of a graph of groups that has the shape displayed on Figure~\ref{fig-interior}. 
\end{proof}

\begin{figure}
\begin{center}
\input{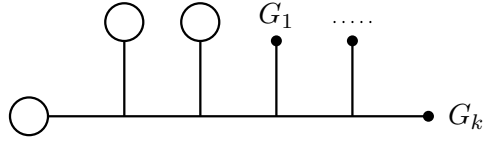}
\caption{A $(3\text{rk}_f(G,\mathcal{F})+2|\mathcal{F}|-4)$-simplex in $\mathbb{P}\mathcal{O}(G,\mathcal{F})$.}
\label{fig-interior}
\end{center}
\end{figure}

\subsection{Very small graphs of actions}\label{sec-Levitt}

In this section, we mention a decomposition result which was proved by Levitt for actions of finitely generated groups on $\mathbb{R}$-trees having finitely many orbits of branch points \cite[Theorem 1]{Lev94}. The proof uses the fact that every such action on a tree $T$ is \emph{finitely supported}, i.e. there exists a finite tree $K\subset T$ such that every arc $I\subset T$ is covered by finitely many translates of $K$. The fact that minimal $(G,\mathcal{F})$-actions on $\mathbb{R}$-trees are finitely supported was noticed by Guirardel in \cite[Lemma 1.14]{Gui08}. Using finiteness of the number of orbits of branch and inversion points in a very small $(G,\mathcal{F})$-tree, Levitt's theorem adapts to our more general framework.

\begin{theo} (Levitt \cite[Theorem 1]{Lev94})\label{goa}
Let $G$ be a countable group, and let $\mathcal{F}$ be a free factor system of $G$. Then every tree $T\in VSL(G,\mathcal{F})$ splits uniquely as a graph of actions, all of whose vertex trees have dense orbits, such that the Bass--Serre tree of the underlying graph of groups is very small, and all its edges have positive length.
\end{theo}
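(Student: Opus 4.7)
The plan is to construct a canonical $G$-invariant transverse covering of $T$ and apply Guirardel's skeleton theorem (Proposition \ref{skeleton}) to produce the desired graph of actions, following Levitt's strategy from \cite{Lev94}. Call a non-degenerate arc $J \subseteq T$ \emph{simplicial} if only finitely many translates $gJ$ meet $J$ in a non-degenerate subarc; equivalently, $J$ embeds (away from finitely many points) in the quotient space $T/G$. Declare $x \sim y$ if either $x = y$ or the arc $[x,y]$ contains no simplicial subarc, and let $\mathcal{Y}$ be the collection of closures of equivalence classes.

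The first step will be to show that $\mathcal{Y}$ is a $G$-invariant transverse covering of $T$ with finitely many orbits of elements. Since $T$ is very small, Corollary \ref{bound-branch} provides finitely many orbits of branch and inversion points, so the set of endpoints of maximal simplicial arcs is contained in finitely many orbits. Each $Y \in \mathcal{Y}$ is a closed subtree; two distinct elements of $\mathcal{Y}$ can meet in at most one point, because any arc in $Y \cap Y'$ would have to be simultaneously simplicial-free (by membership in each) and separated from another equivalence class by a simplicial arc, yielding a contradiction. Every arc in $T$ is covered by finitely many elements of $\mathcal{Y}$ because crossing between distinct $Y_\alpha$'s requires traversing a maximal simplicial segment, and there are finitely many orbits of such segments.

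Then Proposition \ref{skeleton} will yield a $(G,\mathcal{F})$-graph of actions $\mathcal{G}$ with $T(\mathcal{G}) = T$, whose vertex trees are the elements of $\mathcal{Y}$ and whose Bass--Serre tree $S$ is the skeleton of $\mathcal{Y}$. By construction every vertex tree $Y$ contains no simplicial subarc, which forces the $\mathrm{Stab}(Y)$-orbit of any point of $Y$ to be dense in $Y$. The edges of $S$ correspond to maximal simplicial arcs in $T$, hence have positive length. An arc stabilizer in $S$ is the stabilizer of a simplicial arc in $T$, which is either trivial or cyclic, non-peripheral and closed under taking roots since $T$ is very small; a tripod stabilizer in $S$ fixes a tripod in $T$, hence is trivial.

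Uniqueness will come from the fact that the equivalence relation is canonically defined from $T$: in any decomposition as a graph of actions with vertex trees of dense orbits and Bass--Serre tree having positive edge lengths, the vertex trees must contain no simplicial subarc and the edges of the Bass--Serre tree must correspond to simplicial arcs of $T$, forcing the decomposition to coincide with $\mathcal{Y}$. The main obstacle will be Step 1, namely verifying that the simplicial-free equivalence classes are closed subtrees forming a transverse covering; this is where finiteness of orbits of branch points (and the very small hypothesis, which is needed to ensure that stabilizers of putative simplicial arcs behave well) enters crucially, and the argument is technical although essentially parallel to \cite[Theorem 1]{Lev94} and to the discussion in \cite[Lemma 1.14]{Gui08}.
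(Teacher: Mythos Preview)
Your approach is essentially Levitt's original argument from \cite{Lev94}, which is exactly what the paper invokes: the paper does not give an independent proof but simply cites Levitt and notes that the required hypotheses---finite support of the action (via \cite[Lemma 1.14]{Gui08}) and finitely many orbits of branch and inversion points (Corollary \ref{bound-branch})---hold for very small $(G,\mathcal{F})$-trees. So in broad strokes you are reconstructing precisely the argument the paper is appealing to.

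There is, however, a technical slip in your construction of $\mathcal{Y}$. With your definition of $\sim$, any point $x$ lying in the interior of a maximal simplicial arc $I$ has $\{x\}$ as its entire equivalence class: for every $y\neq x$ the segment $[x,y]$ contains a simplicial subarc (a subarc of $I$). Hence the closures of these classes are singletons, which are degenerate and therefore not admissible as members of a transverse covering (the paper's definition requires nondegenerate closed subtrees). The standard fix is to take $\mathcal{Y}$ to consist of the closures of the \emph{nondegenerate} equivalence classes (the dense-orbit pieces) together with the maximal simplicial arcs themselves. With this amendment your verification of the transverse-covering axioms goes through; the skeleton produced by Proposition \ref{skeleton} then has two kinds of vertex trees---those with dense orbits and those that are arcs---and one absorbs the arc-type vertices into positive-length edges to obtain the decomposition in the stated form. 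Your arguments for very-smallness of the Bass--Serre tree and for uniqueness are fine once this is in place.
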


\subsection{Additional arguments for computing the dimension of $VSL(G,\mathcal{F})\smallsetminus \mathbb{P}\mathcal{O}(G,\mathcal{F})$}\label{sec-add}

We start by recalling the following well-known fact. We recall that a $(G,\mathcal{F})$-tree $T$ has \emph{finitely many orbits of directions} if there are finitely many orbits of directions at branch or inversion points in $T$.

\begin{prop}\label{dense-arcs}
Let $T$ be a $(G,\mathcal{F})$-tree with dense orbits. If $T$ is small, and has finitely many orbits of directions (in particular, if $T$ is very small, or small and geometric), then all stabilizers of nondegenerate arcs in $T$ are trivial.
\end{prop}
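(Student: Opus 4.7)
The argument is by contradiction. Suppose a nondegenerate arc $I=[a,b]\subseteq T$ has nontrivial stabilizer. Since $T$ is small, this stabilizer is cyclic, generated by some nonperipheral element $g\in G$.

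I would first show that branch points are dense in $I$. Density of $G$-orbits in $T$ means every orbit is dense; provided $T$ is not a line (the line case being quickly dispatched, since an isometric $G$-action on $\mathbb{R}$ with dense orbits forces arc stabilizers to equal the global kernel, which in our free-product setting cannot contain a nonperipheral element and give a nontrivial minimal action), $T$ carries at least one branch point, whose orbit accumulates everywhere, and in particular infinitely many branch points lie in $\mathrm{int}(I)$. By hypothesis there are only finitely many $G$-orbits of directions at branch points, hence only finitely many $G$-orbits of branch points; pigeonhole produces distinct branch points $p,q\in\mathrm{int}(I)$ and an element $h\in G$ with $hp=q$. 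The translate $hI$ is fixed pointwise by $hgh^{-1}$ and meets $I$ at $q$. Two cases arise. Either $hI\cap I$ is a nondegenerate subarc, so both $g$ and $hgh^{-1}$ stabilize it; then smallness forces $\langle g,hgh^{-1}\rangle$ to be cyclic, and since the two generators are conjugate one obtains $hgh^{-1}=g^{\pm 1}$, placing $h$ in the normalizer $N(\langle g\rangle)$. Or else $hI\cap I=\{q\}$, in which case $q$ acquires a new direction along $hI$ transverse to $I$, stabilized by the conjugate $hgh^{-1}$ of $g$.

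The endgame uses density of orbits to iterate the above: one produces infinitely many pairs $(p_n,q_n)$ of distinct branch points in $\mathrm{int}(I)$ and elements $h_n\in G$ with $h_np_n=q_n$. Since $g$ is nonperipheral in a free product its centralizer is maximal cyclic and the normalizer $N(\langle g\rangle)$ is virtually cyclic, so its orbit on $I$ is discrete; hence only finitely many $h_n$ can fall into the first case above. The remaining $h_n$ then contribute infinitely many branch points in $\mathrm{int}(I)$, each equipped with a new transverse direction whose stabilizer is a conjugate of $\langle g\rangle$. A bookkeeping argument shows these transverse directions cannot all lie in finitely many $G$-orbits: an identification between two of them at distinct branch points on $I$ would conjugate one copy of $\langle g\rangle$ to another fixing an overlapping subarc (namely a neighborhood of $I$ near both points), again forcing the conjugator into $N(\langle g\rangle)$ and mapping one branch point to another, contradicting the discreteness of the normalizer orbit on $I$. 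This contradicts finiteness of orbits of directions.

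The main obstacle I anticipate is organizing the final bookkeeping cleanly: one must ensure that the group elements identifying the various transverse directions at different branch points of $I$ genuinely produce the contradiction with the normalizer's discrete orbit, rather than being absorbed into direction stabilizers. The combinatorics of directions at branch points in a merely small (not necessarily very small) tree is delicate, so I would most likely route the last step through the index formula of Section~\ref{sec-dimension}, using that $i(T)$ is finite to bound the number of orbits of directions appearing in a fixed arc independently of the choices made.
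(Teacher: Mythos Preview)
Your bifurcation into the two cases ``$hI\cap I$ nondegenerate'' versus ``$hI\cap I=\{q\}$'' is where the argument goes wrong, and the bookkeeping you propose for the second case does not close. When an element $k\in G$ identifies the transverse direction at some $q_n$ with the transverse direction at some $q_m$, it conjugates $h_ngh_n^{-1}$ (which fixes an arc of $h_nI$, \emph{not} of $I$) to an element fixing an arc of $h_mI$; there is no reason this produces two elements fixing a common subarc of $I$, so your claim that $k$ is forced into $N(\langle g\rangle)$ is unjustified. Routing through the index formula of Section~\ref{sec-dimension} does not help either: the quantity $v_1(x)$ entering $i(x)$ counts orbits of directions with \emph{trivial} stabilizer, whereas the transverse directions you manufacture all have nontrivial stabilizer (a conjugate of $\langle g\rangle$), so they are invisible to $i(T)$.

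The paper avoids the bifurcation entirely by exploiting the hypothesis more sharply. Rather than matching branch \emph{points} in the interior of $I$, one matches \emph{directions}: at each of the infinitely many branch points $p$ in the interior of $I$ consider the direction $d_p$ pointing towards $b$. These are infinitely many directions at branch points, so by hypothesis two of them lie in the same $G$-orbit, say $d'=hd$ with base points $p\neq q$. Because $h$ carries the germ of $I$ at $p$ (oriented towards $b$) to the germ of $I$ at $q$ (oriented towards $b$), two things follow at once: first, $hI\cap I$ contains a nondegenerate arc near $q$, so your Case~2 simply cannot occur; second, $h$ satisfies the standard hyperbolicity criterion (it carries the direction at $p$ towards $q$ to the direction at $q$ away from $p$), so $h$ is hyperbolic in $T$. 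Now $g$ and $hgh^{-1}$ fix a common nondegenerate subarc, hence commute by smallness; so $h$ preserves the axis of $g$ in any Grushko $(G,\mathcal{F})$-tree, forcing $\langle g,h\rangle$ to be cyclic. This is impossible since $g$ is elliptic and $h$ is hyperbolic in $T$.

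The moral is that the finiteness hypothesis is about \emph{directions}, and using it only to control orbits of branch points throws away exactly the orientation information that kills the transverse case and makes $h$ hyperbolic.
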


\begin{proof}
Let $e\subseteq T$ be a nondegenerate arc in $T$, and assume there exists a nontrivial element $g\in G$ such that $ge=e$. We can find two distinct directions $d,d'$ in $e$ based at branch or inversion points of $T$ (oriented in the same way), and an element $h\in G$ so that $d'=hd$. Notice in particular that $h$ is hyperbolic in $T$, so $h$ is nonperipheral, and $\langle g,h\rangle$ is not cyclic. The points at which these directions are based can be chosen to be both arbitrarily close to the midpoint of $e$, and in this case $g$ and $hgh^{-1}$ fix a common nondegenerate subarc of $e$. As $T$ is small, this implies that $g$ and $hgh^{-1}$ commute. Hence $h$ preserves the axis of $g$ in any Grushko $(G,\mathcal{F})$-tree, which implies that $g$ and $h$ generate a cyclic subgroup of $G$, a contradiction. 
\end{proof}

The following proposition extends \cite[Theorem III.2]{GL95}.

\begin{prop}\label{index-non-geom}
Let $T$ be a small $(G,\mathcal{F})$-tree. If $T$ is nongeometric, then $i(T)<2\text{rk}_K(G,\mathcal{F})-2$.
\end{prop}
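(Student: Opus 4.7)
The plan is to compare $i(T)$ with the index of a geometric approximation of $T$, where equality in Proposition~\ref{bound-index} is known to hold. By Theorem~\ref{approx-by-geom-2}, there is a sequence $(T_n)_{n\in\mathbb{N}}$ of minimal small geometric $(G,\mathcal{F})$-trees that strongly converges to $T$, where each $T_n=T_{K_n}$ for an increasing finite subtree $K_n\subseteq T$; Theorem~\ref{pseudogroup-tree} then supplies an equivariant resolution morphism $j_n:T_n\to T$. Because $T$ is assumed nongeometric, none of the $j_n$ can be an isometry (otherwise $T\cong T_n$ would itself be geometric). Together with Proposition~\ref{bound-index}, which gives $i(T_n)=2\,\mathrm{rk}_K(G,\mathcal{F})-2$ for every $n$, this reduces the statement to proving $i(T)<i(T_n)$ for some~$n$.

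Revisiting the general-case part of the proof of Proposition~\ref{bound-index}, one sees that for each orbit $\mathcal{O}$ of branch or inversion points in $T$ with representative $x$ and each $n$ large enough, there is a lift $x_n\in T_n$ satisfying $i(x_n)\ge i(x)$, and distinct orbits in $T$ produce distinct orbits in $T_n$ under this construction. This gives an injection $\mathcal{O}\mapsto\mathcal{O}_n$ from branch/inversion orbits of $T$ into those of $T_n$ that weakly increases indices, so $i(T)\le i(T_n)$. Strict inequality follows as soon as one exhibits, for some large $n$, either an orbit of branch or inversion points of $T_n$ that is missed by this injection, or an orbit $\mathcal{O}$ in $T$ for which $i(\mathcal{O})<i(\mathcal{O}_n)$. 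I would produce such a discrepancy by a folding argument: since $j_n$ is a morphism but not an isometry, one can find an innermost folding point $p_n\in T_n$ at which two distinct germs of segments are identified by $j_n$. Such a $p_n$ is necessarily a branch or inversion point of $T_n$, and by choosing the folding to be generic one arranges that at least one orbit of trivial-stabilizer directions is lost when passing from $p_n$ to $j_n(p_n)$, while the Kurosh rank of the stabilizer can only grow. A careful bookkeeping using the formula expressing $i(\mathcal{O})$ through the leaf graph $\mathcal{S}$ (derived in Lemmas~\ref{lemma-index-1} and \ref{lemma-index-2}) then shows that the loss is not compensated anywhere else.

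The main obstacle is to ensure that the folding really costs at least one unit of total index. In particular, if $j_n$ collapses two directions whose stabilizers are nontrivial cyclic, the stabilizer of the image may pick up an extra elliptic generator, and one has to check that this gain in Kurosh rank cannot exceed the loss of trivial-stabilizer direction orbits. The cleanest approach, following Gaboriau--Levitt \cite{GL95}, is to localize everything to the system of isometries associated to $K_n$: a nontrivial folding produces a reduced word in the partial isometries of this system that fixes a nondegenerate arc in $T$ but not in $T_n$, which creates an extra cycle in some component of the leaf graph $\mathcal{S}$ upon passage to the limit and accounts directly for the strictly smaller value of $i(T)$.
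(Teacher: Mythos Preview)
Your overall strategy matches the paper's: compare $i(T)$ to $i(T_n)$ via the lifting argument from Proposition~\ref{bound-index}, and exploit that $j_n$ is not an isometry. But two steps in your outline are genuine gaps.

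First, the assertion that the folding point $p_n$ ``is necessarily a branch or inversion point of $T_n$'' is not automatic. A morphism can fold at an interior (valence-two) point of $T_n$. The paper handles this by an additional hypothesis on the approximation: it chooses $K_n$ so that every extremal vertex of $K_n$ is already a branch or inversion point of $T$. Under that choice, if the folding point $y$ is not a branch or inversion point, then the two arcs $e_1,e_2$ lie in the interior of a single $G$-translate of $K_n$ (Theorem~\ref{pseudogroup-tree}), and $j_n$ restricts to an isometry there, contradicting the fold. Without this refinement your claim about $p_n$ is unjustified.

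Second, your treatment of the case where both folded directions at $p_n$ have nontrivial stabilizer is incomplete. You correctly flag this as the obstacle, but your proposed fix via ``an extra cycle in the leaf graph $\mathcal{S}$'' is too vague to pin down a strict index drop. The paper's resolution is more direct and does not go through the leaf graph at all: since $T$ is small, the cyclic stabilizers of $e_1$ and $e_2$ generate a cyclic group, so some nontrivial $g$ fixes both $e_1$ and $e_2$. But Corollary~\ref{stab-isom} says that $j_n$ is an \emph{isometry} in restriction to $\mathrm{Fix}(g)$, which rules out identifying $e_1$ and $e_2$. (An analogous use of Corollary~\ref{stab-isom} disposes of branch/inversion points of index~$0$.) The remaining possibility --- that at least one of the folded directions has trivial stabilizer --- is exactly where the index comparison bites: under the equality hypothesis $i(T)=i(T_n)$ every trivial-stabilizer direction at $x_n$ is the lift of one at $x$, so it cannot disappear under $j_n$. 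The paper runs this as a proof by contradiction rather than producing the strict inequality directly; that packaging is what lets Corollary~\ref{stab-isom} do the work cleanly.
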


\begin{proof}
We know from Proposition \ref{bound-index} that $i(T)\le 2\text{rk}_K(G,\mathcal{F})-2$. Assume towards a contradiction that $i(T)=2\text{rk}_K(G,\mathcal{F})-2$.

Let $Y\subset T$ be a finite set that contains one point from each $G$-orbit with positive index, and let $x\in Y$. The Kurosh decomposition of $\text{Stab}(x)$ reads as $$\text{Stab}(x)=H_{i_1}\ast\dots\ast H_{i_j}\ast F,$$ where $F$ is a finitely generated free group, and $H_{i_l}$ is $G$-conjugate to $G_{i_l}$ for all $l\in\{1,\dots,j\}$. Let $q$ be the rank of $F$. Let $(T_n)_{n\in\mathbb{N}}$ be an approximation of $T$ constructed as in the proof of Theorem \ref{approx-by-geom}. We can assume that $K_n$ has been chosen so that the extremities of $K_n$ are branch points or inversion points in $T$ (this can be achieved by choosing for $x_0$ a branch point or inversion point of $T$, with the notations from the proof of Theorem \ref{approx-by-geom}). As in the proof of Proposition \ref{bound-index} in the nongeometric case, we choose directions $d_1,\dots,d_r$, so that $2j+2q+r-2=i(x)$, and $n\in\mathbb{N}$ so that we can associate a point $x_n\in T_n$ to each $x\in Y$.

As $i(T_n)=i(T)$, the orbit of every branch or inversion point of $T_n$ with positive index contains some $x_n$. Furthermore, every direction from $x_n$ with trivial stabilizer belongs to the $\text{Stab}(x_n)$-orbit of the lift $d'_{\beta}$ of one of the directions $d_{\beta}$ to $T_n$. 

The morphism $j_n:T_n\to T$ is not an isometry, otherwise $T$ would be geometric. Hence there exist $y\in T_n$, and two adjacent arcs $e_1$ and $e_2$ at $y$ whose $j_n$-images have a common initial segment. If $y$ is a branch point or an inversion point with positive index, it follows from the above paragraph that both $e_1$ and $e_2$ have nontrivial stabilizer (otherwise we would have $i(T)<i(T_n)$). As $T$ is small, the stabilizers of $e_1$ and $e_2$ generate a cyclic subgroup of $G$, so there exists $g\in G$ that fixes both $e_1$ and $e_2$ in $T_n$. This contradicts injectivity of $j_n$ in restriction to the fixed point set of $g$ (Proposition~\ref{stab-isom}). If $y$ is a branch or inversion point with index $0$, then $y$ has cyclic stabilizer, and there exists $g\in G$ that stabilizes all adjacent edges. Again, this contradicts injectivity of $j_n$ in restriction to the fixed point set of $g$. If $y$ is neither a branch point nor an inversion point, then Theorem~\ref{pseudogroup-tree} implies that $e_1$ and $e_2$ are contained in the interior of a common $G$-translate of $K_n$, because extremal points of $K_n$ have been chosen to be branch or inversion points in $T$. This again leads to a contradiction, since the restriction of $j_n$ to this translate of $K_n$ is an isometry.
\end{proof}

The following proposition is an extension of \cite[Theorem IV.1]{GL95}.

\begin{prop}\label{rank-bnd}
For all very small $(G,\mathcal{F})$-trees $T\notin\mathcal{O}(G,\mathcal{F})$, we have $r_{\mathbb{Q}}(T)< 3\text{rk}_f(G,\mathcal{F})+2|\mathcal{F}|-3$.
\end{prop}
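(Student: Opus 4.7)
The proof adapts Gaboriau--Levitt's argument from \cite[Theorem IV.1]{GL95}, splitting according to whether $T$ is geometric.

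\textbf{Non-geometric case.} The construction used in the proof of Proposition~\ref{rank-bound} exhibits $T$ as the strong limit of a continuous family $(T_{\mathcal{K}(t)})_t$ of geometric $(G,\mathcal{F})$-trees, each satisfying $r_{\mathbb{Z}}(T_{\mathcal{K}(t)}) \le 3\,\text{rk}_f(G,\mathcal{F}) + 2|\mathcal{F}| - 3$ by Propositions~\ref{rank-finite} and~\ref{bound-branch}. The strict inequality in Proposition~\ref{rank-non-geom} then yields
\[
r_{\mathbb{Q}}(T) < \limsup_{t \to \infty} r_{\mathbb{Z}}(T_{\mathcal{K}(t)}) \le 3\,\text{rk}_f(G,\mathcal{F}) + 2|\mathcal{F}| - 3,
\]
and integrality of $r_{\mathbb{Q}}(T)$ forces $r_{\mathbb{Q}}(T) \le 3\,\text{rk}_f(G,\mathcal{F}) + 2|\mathcal{F}| - 4$.

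\textbf{Geometric case.} Here I work with $r_{\mathbb{Q}}(T) \le r_{\mathbb{Z}}(T) \le \text{rk}_f(G,\mathcal{F}) + b - 1$ from Proposition~\ref{rank-finite}, where $b$ is the number of $G$-orbits of branch and inversion points of $T$. Since $T$ is geometric, Proposition~\ref{bound-index} gives $i(T) = 2\,\text{rk}_K(G,\mathcal{F}) - 2$, and $b \le i(T)$ by Proposition~\ref{index-branch}. If some orbit has index at least $2$, then $b \le 2\,\text{rk}_K(G,\mathcal{F}) - 3$ and the desired bound follows. The remaining regime is when every branch/inversion orbit has index exactly $1$ (so $b = 2\,\text{rk}_K(G,\mathcal{F}) - 2$), in which case each such point is either trivalent with trivial stabilizer, or has Kurosh rank $1$ stabilizer with a unique $\text{Stab}(x)$-orbit of trivial-stabilizer direction.

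\textbf{Exceptional case.} Since $T \notin \mathcal{O}(G,\mathcal{F})$, non-Grushko structure must appear in this all-index-$1$ regime: either some branch or inversion point has non-peripheral cyclic stabilizer, or the graph-of-actions decomposition of Theorem~\ref{goa} has a nontrivial dense-orbit vertex tree. In the former situation, the generator $g$ of the stabilizer is non-peripheral and elliptic, so $||g||_T = 0$; the additivity of translation lengths modulo $2\Lambda$ from Lemma~\ref{lemma-trees} then imposes a nontrivial $\mathbb{Z}/2\mathbb{Z}$-linear relation among the generators $\{||g_i||_T\}$ of $L/2\Lambda$ provided by Proposition~\ref{rank-1}, reducing $\dim_{\mathbb{Z}/2\mathbb{Z}} L/2\Lambda$ (and hence $r_{\mathbb{Z}}(T)$) by at least one. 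In the latter situation, the dense-orbit vertex component can itself be approximated by a continuous family of finer resolutions, and the non-geometric argument applies inside that component to yield strict inequality. In either case, $r_{\mathbb{Z}}(T) \le 3\,\text{rk}_f(G,\mathcal{F}) + 2|\mathcal{F}| - 4$, completing the proof. The main obstacle is the former subcase: one must ensure (possibly after replacing $g$ by a suitable conjugate, using the specific branch-point structure dictated by index $1$) that $g$ has nontrivial image in the mod-$2$ abelianization of the free part $F_N$, so that the relation produced is genuinely nonzero modulo $2\Lambda$.
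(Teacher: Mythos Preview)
Your non-geometric case is fine and matches the paper. The geometric case, however, has two genuine gaps in what you call the exceptional case.

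\textbf{First gap (the elliptic-element relation).} You want the relation $||g||_T=0$ to be nontrivial in $L/2\Lambda$. As you note yourself, this requires $g$ to have nontrivial image in the mod-$2$ abelianisation of the free part $F_N$, and your proposed fix of conjugating cannot help: conjugation does not change abelianisation. Concretely, nothing in the index-$1$ hypothesis prevents the cyclic stabilizer $\langle g\rangle$ from being generated by, say, a commutator or a square in $F_N$; in such cases the relation $0=\sum\epsilon_i||g_i||_T$ in $L/2\Lambda$ is the trivial one. The paper does \emph{not} argue via $L/2\Lambda$ at all in the simplicial subcase: instead it classifies the five possible local pictures at an index-$1$ vertex, and shows that whenever a non-peripheral $\mathbb{Z}$ stabilizer appears one can build a new very small simplicial tree $T'$ with strictly more orbits of edges (by unfolding a $\mathbb{Z}$-vertex into a loop, or by the operation on Figure~\ref{fig-sun}). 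Since $r_{\mathbb{Z}}(T')\le 3\text{rk}_f(G,\mathcal{F})+2|\mathcal{F}|-3$, this forces $r_{\mathbb{Z}}(T)$ to be strictly smaller.

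\textbf{Second gap (the dense-orbit component).} Your plan to ``apply the non-geometric argument inside that component'' fails because the vertex tree $T_v$ is \emph{geometric}: in the all-index-$1$ regime one has $i(T_v)=2\text{rk}_K(G_v)-2$, and by Proposition~\ref{index-non-geom} this equality forces $T_v$ to be geometric. So there is no strict inequality coming from Proposition~\ref{rank-non-geom}. The paper's argument here is completely different: using that $T_v$ has dense orbits and trivial arc stabilizers, the associated system of isometries has independent generators (Lemma~\ref{lemma-genind}), and the volume identity $|F|=\sum_\phi|A_\phi|$ from \cite{GLP94} yields a nontrivial $\mathbb{Z}/2\mathbb{Z}$-relation among the generators $d_T(p_1,p_j)$ of $\Lambda/L$ (nontriviality uses that each index $i(p_j)=1$ is odd, via Equation~\eqref{eq-index}). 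This is the key idea you are missing.
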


\begin{proof}
When $T$ is nongeometric, the claim follows from Propositions \ref{rank-non-geom} and \ref{rank-bound}. We will assume that $T$ is geometric and show that $r_{\mathbb{Z}}(T)<3\text{rk}_f(G,\mathcal{F})+2|\mathcal{F}|-3$. We have $r_{\mathbb{Z}}(T)<+\infty$ (Proposition \ref{rank-finite}), and $\Lambda/2\Lambda$ is isomorphic to $(\mathbb{Z}/2\mathbb{Z})^{r_{\mathbb{Z}}(T)}$.  

If the number of distinct orbits of branch or inversion points in $T$ is strictly smaller than $2\text{rk}_K(G,\mathcal{F})-2$, then $r_{\mathbb{Z}}(T)<3\text{rk}_f(G,\mathcal{F})+2|\mathcal{F}|-3$ by Proposition \ref{rank-finite}, and we are done. Otherwise, let $p_1,\dots,p_{2\text{rk}_K(G,\mathcal{F})-2}$ be a set of representatives in $K$ of the orbits of branch or inversion points in $T$. Proposition \ref{bound-index} implies that for all $j\in\{1,\dots,2\text{rk}_K(G,\mathcal{F})-2\}$, we have $i(p_j)\le 1$, and hence $i(p_j)=1$ by Proposition \ref{index-branch}.

If $T$ is a simplicial tree, then $\Lambda$ is generated by the lengths of the edges of the quotient graph of groups. In particular, Proposition \ref{rank-bound} implies that the maximal number of edges of a simplicial tree in $VSL(G,\mathcal{F})$ is $3\text{rk}_f(G,\mathcal{F})+2|\mathcal{F}|-3$. All vertices of $T$ have index $1$. Therefore, if $x\in T$ is a vertex, we either have $\text{Stab}(x)=\{e\}$ and $v_1(x)=3$, or $\text{rk}_K(\text{Stab}(x))=1$ and $v_1(x)=1$. Using the fact that $T$ is very small, we get that every vertex $v$ of $T$ satisfies one of the following possibilities, displayed on Figure \ref{fig-possibilities}: either $v$
\begin{enumerate}
\item has valence $3$, and trivial stabilizer, or
\item projects in the quotient graph of groups to a valence $1$ vertex whose stabilizer is peripheral, or
\item projects in the quotient graph of groups to a valence $1$ vertex whose stabilizer is isomorphic to $\mathbb{Z}$, and not peripheral, or
\item projects in the quotient graph of groups to a valence $2$ vertex with stabilizer isomorphic to $\mathbb{Z}$ and not peripheral, adjacent to both an edge with trivial stabilizer and an edge with $\mathbb{Z}$ stabilizer, or
\item projects in the quotient graph of groups to a valence $3$ vertex with stabilizer isomorphic to $\mathbb{Z}$ and not peripheral, adjacent to one edge with trivial stabilizer, and two edges with $\mathbb{Z}$ stabilizers.
\end{enumerate}

\begin{figure}
\begin{center}
\input{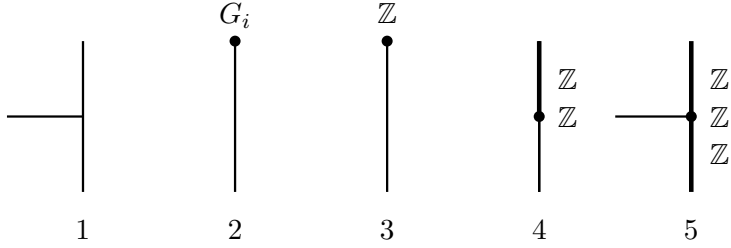}
\caption{Vertices of index $1$ in a very small simplicial $(G,\mathcal{F})$-tree.}
\label{fig-possibilities}
\end{center}
\end{figure}

\noindent As $T\notin \mathcal{O}(G,\mathcal{F})$, some vertex in $T$ satisfies one of the last three possibilities. If some vertex in $T$ satisfies the third possibility, then one can replace the point with $\mathbb{Z}$-stabilizer in the quotient graph of groups by a loop-edge. This operation yields a new minimal, very small simplicial tree $T'$ for which $r_{\mathbb{Z}}(T')>r_{\mathbb{Z}}(T)$, so $r_{\mathbb{Z}}(T)<3\text{rk}_f(G,\mathcal{F})+2|\mathcal{F}|-3$. Otherwise, the graph of groups $T/G$ contains a concatenation of edges that all have the same $\mathbb{Z}$ stabilizer, whose two extremal vertices have valence $2$, and are adjacent to an edge with trivial stabilizer, and whose interior vertices have valence $3$, and are adjacent to a single edge with trivial stabilizer, see Figure \ref{fig-sun}. Figure \ref{fig-sun} illustrates how to construct a tree $T'$ with strictly more orbits of edges than $T$, so that $r_{\mathbb{Z}}(T')>r_{\mathbb{Z}}(T)$. Again, we have $r_{\mathbb{Z}}(T)<3\text{rk}_f(G,\mathcal{F})+2|\mathcal{F}|-3$.

\begin{figure}
\begin{center}
\input{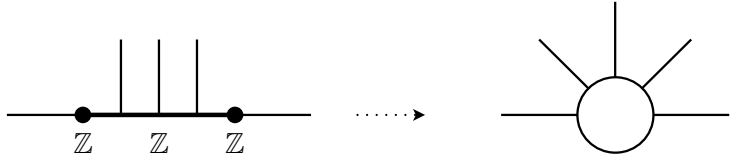}
\caption{Simplicial trees in $VSL(G,\mathcal{F})\smallsetminus \mathbb{P}\mathcal{O}(G,\mathcal{F})$ do not have maximal $\mathbb{Z}$-rank.}
\label{fig-sun}
\end{center}
\end{figure}

Assume now that $T$ has dense orbits. Notice that $\Lambda/2\Lambda=\Lambda/L+L/2\Lambda$, so by Propositions \ref{rank-1} and \ref{rank-2}, it suffices to prove that the rank of $\Lambda/L$ is strictly less than $b-1$, where $b$ denotes the number of orbits of branch or inversion points in $T$. Let $K\subseteq T$ be a finite subtree such that $T=T_K$, chosen in such a way that every terminal vertex of $K$ is either a branch point or an inversion point in $T$. The $(G,\mathcal{F})$-tree $T$ has trivial arc stabilizers by Proposition \ref{dense-arcs}, so the associated system of isometries $\mathcal{K}=(F,\Phi)$  has independent generators by Lemma \ref{lemma-genind} (notice here that we are considering the system of isometries and not the system of morphisms). Using \cite[Proposition 6.1]{GLP94}, we get that 
\begin{equation}\label{eq-independent}
|F|=\sum_{\phi\in \Phi} |A_{\phi}|,
\end{equation}
\noindent where $|F|$ (resp. $|{A_{\phi}}|$) denotes the total length of $F$ (resp. of ${A_{\phi}}$). We have $$|F|=\sum_{e=[q,r]\in E(K)}d_T(q,r),$$ where the sum is taken over all edges in $F$, after subdividing $F$ at all points in the singular set $\text{Sing}$ (notice that in view of our choice of $K$, all points in $\text{Sing}$ correspond to branch points or inversion points in $T$). We denote by $V(F)$ the set of vertices in $F$. Our hypothesis on the extremal vertices of $K$, together with Lemma \ref{lemma-trees}, implies that for all points $q,r\in V(F)$, the length of $[q,r]$ is equal modulo $L$ to the sum $d_T(p_1,p_i)+d_T(p_1,p_j)$, where $p_i$ (resp. $p_j$) belongs to the $G$-orbit of $q$ (resp. $r$). Denoting by $\mathcal{O}(p_i)$ the orbit of $p_i$ for all $i\in\{1,\dots,2\text{rk}_K(G,\mathcal{F})-2\}$, we have $$|F|=\sum_{i=1}^{2\text{rk}_K(G,\mathcal{F})-2}d_T(p_1,p_i)\times\left(\sum_{x\in V(F)\cap\mathcal{O}(p_i)}v_F(x)\right) \text{~mod~}L,$$ and similarly, for all $\phi\in\Phi$, we have $$|A_{\phi}|=\sum_{i=1}^{2\text{rk}_K(G,\mathcal{F})-2}d_T(p_1,p_i)\times\left(\sum_{x\in V(A_{\phi})\cap \mathcal{O}(p_i)}v_{\phi}(x)\right) \text{~mod~} L,$$ where $V(A_{\phi})$ denotes the set of vertices of $A_{\phi}$, which is contained in $V(F)$, and $v_{\phi}(x)$ denotes the valence of $x$ in $A_{\phi}$. Using the above two equalities, Equation \eqref{eq-independent} gives a linear relation in $\Lambda/L$ between the numbers $d_T(p_1,p_i)$, where the coefficient of $d_T(p_1,p_i)$ is equal to $$\sum_{x\in V(F)\cap\mathcal{O}(p_i)}\left( v_F(x)-\sum_{\phi\in\Phi}v_{\phi}(x)\right).$$ For all $i\in\{1,\dots,2\text{rk}_K(G,\mathcal{F})-2\}$, the index of $p_i$ is equal to $1$. Therefore, Equation~\eqref{eq-index} from the proof of Proposition \ref{bound-index} implies that $$\sum_{x\in V(F)\cap\mathcal{O}(p_i)}\left( v_F(x)-\sum_{\phi\in\Phi}v_{\phi}(x)\right)$$ is odd. Equation \eqref{eq-independent} thus leads to the nontrivial relation $$\sum_{j=2}^{2\text{rk}_K(G,\mathcal{F})-2} d_T(p_1,p_j)=0 \text{~mod~} L$$ between the generators of $\Lambda/L$, so $r_{\mathbb{Z}}(T)<3\text{rk}_f(G,\mathcal{F})+2|\mathcal{F}|-3$. 

In general, let $\mathcal{G}$ be the decomposition of $T$ as a graph of actions provided by Theorem~ \ref{goa}. We assume that $T$ is not simplicial, and let $T_v$ be a nontrivial vertex tree of this decomposition. Then $T_v$ is a very small $(G_v,\mathcal{F}_{G_v})$-tree with dense $G_v$-orbits. Let $T'$ be the very small $(G,\mathcal{F})$-tree obtained from $T$ by collapsing all vertex trees in the $G$-orbit of $T_v$ to points. By definition of the index, we have $$i(T)-i(T')=i(T_v)-(2\text{rk}_K(G_v)-2).$$ As $T$ is geometric, Proposition \ref{bound-index} implies that the left-hand side of the above equality is nonnegative, while the right-hand side is nonpositive. This implies that $i(T_v)=2\text{rk}_K(G_v)-2$. Using Proposition \ref{index-non-geom}, this shows that the tree $T_v$ is geometric. Assume that the number of distinct $G_v$-orbits of branch or inversion points in the minimal subtree of $T_v$ is strictly smaller than $2\text{rk}_K(G_v)-2$. Then one of these orbits has index at least $2$ in $T_v$, and hence in $T$. This implies that the number of distinct $G$-orbits of branch or inversion points in $T$ is strictly smaller than $2\text{rk}_K(G,\mathcal{F})-2$, and we are done by Proposition \ref{rank-finite}. We are thus left with the case where the number of distinct $G_v$-orbits of branch or inversion points in $T_v$ is equal to $2\text{rk}_K(G_v)-2$. As distinct $G$-translates of $T_v$ are disjoint in $T$, all these $G_v$-orbits of branch or inversion points are distinct when viewed as $G$-orbits of points in $T$. We denote by $p_1,\dots,p_{2\text{rk}_K(G_v)-2}$ a set of representatives of the $G_v$-orbits of branch or inversion points of $T_v$. In this case, as $T_v$ has dense orbits, the analysis from the above paragraph provides a nontrivial relation between the generators $d_{T_v}(p_1,p_i)$ of $\Lambda(T_v)/L(T_v)$. The numbers $d_{T_v}(p_1,p_i)$ may also be viewed as part of a generating set of $\Lambda(T)/L(T)$, and we have a nontrivial relation between these generators. Again, this implies that $r_{\mathbb{Z}}(T)<3\text{rk}_K(G,\mathcal{F})+2|\mathcal{F}|-3$. 
\end{proof}

\begin{proof}[Proof of Theorem \ref{dimension-2}]
Theorem \ref{dimension-2} follows from Propositions \ref{rank-ccl} and \ref{rank-bnd}, because $VSL(G,\mathcal{F})\smallsetminus \mathbb{P}\mathcal{O}(G,\mathcal{F})$ contains a $(3\text{rk}_f(G,\mathcal{F})+2|\mathcal{F}|-5)$-simplex made of simplicial $(G,\mathcal{F})$-trees (except in the case where $G=G_1\ast G_2$ and $\mathcal{F}=\{G_1,G_2\}$, for which $\mathbb{P}\mathcal{O}(G,\mathcal{F})$ is reduced to a point and $VSL(G,\mathcal{F})\smallsetminus \mathbb{P}\mathcal{O}(G,\mathcal{F})$ is empty). An example of such a simplex is given by varying edge lengths in a graph of groups obtained from the graph of groups displayed on Figure \ref{fig-interior} by collapsing a loop, or merging two points corresponding to subgroups $G_1$ and $G_2$, and adding an edge with nontrivial cyclic stabilizer generated by a nonperipheral element in $G_1\ast G_2$, for instance.
\end{proof}

\section{Very small actions are in the closure of outer space.}\label{sec-closure}

In the classical case where $G=F_N$ is a finitely generated free group of rank $N$, and $\mathcal{F}=\emptyset$, Cohen and Lustig have shown that a minimal, simplicial $F_N$-tree lies in the closure $\overline{cv_N}$ if and only if it is very small \cite{CL95}. Bestvina and Feighn \cite{BF94} have extended their result to all minimal $F_N$-actions on $\mathbb{R}$-trees. However, it seems that their proof does not handle the case of actions that contain both nontrivial arc stabilizers, and minimal components dual to measured foliations on compact, nonorientable surfaces. Indeed, for such actions, it is not clear how to approximate the foliation by rational ones without creating any one-sided leaf (in which case the action we get is not very small). If the action has trivial arc stabilizers (i.e. if the dual band complex contains no annulus), then the argument in \cite[Lemma 4.1]{BF94} still enables to get an approximation by very small, simplicial $F_N$-trees, by using the narrowing process described in \cite[Section 7]{Gui98}. However, this argument does not seem to handle the case of trees having nontrivial arc stabilizers. We will give a proof of the fact that $\overline{cv_N}$ is the space of very small, minimal, isometric actions of $F_N$ on $\mathbb{R}$-trees that does not rely on train-track arguments for approximating measured foliations on surfaces by rational ones. Our proof also gives an interpretation of Cohen and Lustig's for simplicial trees. We again work in our more general framework of $(G,\mathcal{F})$-trees, and show the following result.

\begin{theo}\label{cv-closure}
Let $G$ be a countable group, and let $\mathcal{F}$ be a free factor system of $G$. The closure $\overline{\mathcal{O}(G,\mathcal{F})}$ (resp. $\overline{\mathbb{P}\mathcal{O}(G,\mathcal{F})}$) is the space of (projective) length functions of very small $(G,\mathcal{F})$-trees.
\end{theo}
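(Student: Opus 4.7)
The first inclusion is immediate: every Grushko $(G,\mathcal{F})$-tree is trivially very small (all arc stabilizers are trivial), and Proposition~\ref{very-small} asserts that $VSL(G,\mathcal{F})$ is closed in $\mathbb{PR}^G$; hence $\overline{\mathbb{P}\mathcal{O}(G,\mathcal{F})}\subseteq VSL(G,\mathcal{F})$. The substantive content is the reverse inclusion, and the plan is to exhibit every minimal very small $(G,\mathcal{F})$-tree $T$ as a limit of Grushko $(G,\mathcal{F})$-trees.

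The argument proceeds in three stages. First, I reduce to the geometric case: Theorem~\ref{approx-by-geom-2} produces a strongly convergent sequence of minimal very small geometric $(G,\mathcal{F})$-trees approximating $T$, and strong convergence forces translation lengths to eventually stabilize, so it is enough to treat the case where $T$ is itself geometric. Second, I decompose: Theorem~\ref{goa} writes $T$ as a graph of actions whose nontrivial vertex trees $T_v$ have dense $G_v$-orbits (hence trivial arc stabilizers by Proposition~\ref{dense-arcs}) and whose underlying Bass--Serre tree carries a very small simplicial $(G,\mathcal{F})$-action with edges of positive length. Third, I approximate each piece separately and reassemble via the graph-of-actions construction, which is continuous in the equivariant Gromov--Hausdorff topology.

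For each dense-orbit vertex tree $T_v$, I work directly with the associated finite system of isometries $(F_v,\Phi_v)$ from Section~\ref{sec-systems}, which has independent generators by Lemma~\ref{lemma-genind}. Generic small perturbations of the endpoints of the partial isometries, carried out while pinning the marked points $x_i$ so that peripheral subgroups remain elliptic, yield nearby finite systems still having independent generators whose dual trees are simplicial $(G_v,\mathcal{F}_{|G_v})$-trees with trivial arc stabilizers, and after the standard Rips-machine reassembly become Grushko trees. Proposition~\ref{pointed-convergence} then ensures that the prescribed attaching points in the graph of actions lift compatibly to the approximations. For the simplicial part of the decomposition, any edge carrying a nontrivial cyclic stabilizer $\langle c\rangle$ is approximated in the spirit of Cohen--Lustig by unfolding it into a concatenation of trivial-stabilizer edges meeting at a vertex whose vertex group is $\langle c\rangle$, and letting the auxiliary edge lengths shrink to zero. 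Gluing the two families of approximations through the graph of actions produces the required sequence of Grushko $(G,\mathcal{F})$-trees converging to $T$.

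The main obstacle, and the point where earlier approaches break down, is controlling the perturbation step for a dense-orbit component $T_v$ whose dual band complex has a minimal component supported on a nonorientable surface: naive rational approximations of the transverse measure, used in the Bestvina--Feighn argument, can create one-sided compact leaves which produce arc stabilizers in the approximating trees that fail to be closed under taking roots. Working directly at the level of systems of isometries with independent generators sidesteps this issue, since triviality of arc stabilizers in $T_v$ (Proposition~\ref{dense-arcs}) places $(F_v,\Phi_v)$ in an open stratum of the parameter space where generic perturbations produce simplicial duals that are automatically free actions, and no one-sided leaf obstruction appears.
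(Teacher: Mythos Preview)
Your overall architecture (closedness of $VSL$ for one inclusion, reduction to geometric trees via Theorem~\ref{approx-by-geom-2} for the other, then a decomposition-and-reassemble strategy) matches the paper. The gap is in the step you treat as routine: approximating a dense-orbit vertex tree $T_v$ by Grushko $(G_v,\mathcal{F}_{|G_v})$-trees via ``generic small perturbations of the endpoints of the partial isometries.''

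Independent generators (Lemma~\ref{lemma-genind}) does \emph{not} place the system in an open stratum where generic perturbations give simplicial free duals. The dynamical decomposition (Proposition~\ref{goa-geom}, not just the coarser Theorem~\ref{goa}) classifies the minimal components of a geometric tree as arcs, \emph{surface}, or \emph{exotic}. For exotic components, Guirardel's pruning/narrowing argument \cite[Proposition~7.2]{Gui98} does supply the approximation you want, and this is what the paper invokes. But for a surface-type component, a generic perturbation of the interval exchange gives another arational foliation, not a simplicial tree; and if you force rationality, you are back to exactly the one-sided-leaf obstruction on nonorientable surfaces that you acknowledge and that the paper set out to circumvent. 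Your final paragraph asserts that working with systems of isometries ``sidesteps this issue,'' but no mechanism is given, and in fact none is available at that level of generality.

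The paper's fix is structural rather than perturbative: after reducing to trees of surface type, it proves a dichotomy (Proposition~\ref{outer-limits}) --- either some boundary curve of an orbifold piece is \emph{unused}, in which case one narrows from that curve and the minimal foliation collapses to a simplicial piece with trivial edge stabilizers, or $T$ splits as a graph of actions over a one-edge $(G,\mathcal{F})$-free splitting. In the second case the vertex actions have strictly smaller Kurosh rank, and the proof proceeds by induction on $\text{rk}_K(G,\mathcal{F})$ using Lemmas~\ref{reduction-2} and~\ref{reduction}. Your proposal has no induction on Kurosh rank and no analogue of Proposition~\ref{outer-limits}; without these, the surface case is not handled. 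A secondary issue is that your treatment of the simplicial part with nontrivial edge stabilizers (``unfold and shrink auxiliary edges'') produces trees with nontrivial vertex groups $\langle c\rangle$, not Grushko trees; the paper's Lemma~\ref{reduction} is precisely the device that absorbs such edges into an inductive step, via a careful folding onto the axis of $c$ in an approximation of the adjacent vertex tree.
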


In particular, Theorem \ref{cv-closure} states that every minimal, very small $(G,\mathcal{F})$-tree $T$ can be approximated by a sequence of Grushko $(G,\mathcal{F})$-trees. When $T$ has trivial arc stabilizers, we can be a bit more precise about the nature of the approximation we get. 

\begin{de}
Let $T$ be a $(G,\mathcal{F})$-tree. A \emph{Lipschitz approximation} of $T$ is a sequence $(T_n)_{n\in\mathbb{N}}$ of $(G,\mathcal{F})$-trees that converges to $T$, and such that for all $n\in\mathbb{N}$, there exists a $1$-Lipschitz $G$-equivariant map from $T_n$ to $T$.
\end{de}

Lipschitz approximations seem to be useful: they were a crucial ingredient in \cite{Hor14-1} for tackling the question of spectral rigidity of the set of primitive elements of $F_N$ in $\overline{cv_N}$. They also turn out to be a useful ingredient for describing the Gromov boundary of the (hyperbolic) graph of $(G,\mathcal{F})$-cyclic splittings in \cite{Hor14-6}. The \emph{quotient volume} of a very small $(G,\mathcal{F})$-tree $T$ is defined as the sum of the edge lengths of the graph of actions underlying the Levitt decomposition of $T$ (where we recall the definition of the Levitt decomposition from Theorem \ref{goa}). In other words, if $S$ is the simplicial tree obtained after collapsing all subtrees with dense orbits in the decomposition of $T$, then the quotient volume of $S$ is the volume of the quotient graph $S/G$. In particular, if $T$ has dense orbits, then its quotient volume is equal to $0$.

\begin{theo}\label{Lip-approx}
Let $G$ be a countable group, and let $\mathcal{F}$ be a free factor system of $G$. Then every minimal $(G,\mathcal{F})$-tree $T$ with trivial arc stabilizers admits a Lipschitz approximation by (unprojectivized) Grushko $(G,\mathcal{F})$-trees, whose quotient volumes converge to the quotient volume of $T$.
\end{theo}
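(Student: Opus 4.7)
The plan is to combine the geometric approximation from Theorem \ref{approx-by-geom-2} with a Grushko approximation of geometric trees having trivial arc stabilizers, which is made possible by the absence of annuli in the associated band complex.

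First, I would apply Theorem \ref{approx-by-geom-2} to obtain a strong approximation of $T$ by minimal geometric $(G,\mathcal{F})$-trees $T^{(n)}$; by Corollary \ref{stab-isom} these also have trivial arc stabilizers. Strong convergence supplies $G$-equivariant morphisms $j_n : T^{(n)}\to T$, which are $1$-Lipschitz, and via Remark \ref{rk-vol} it makes the simplicial edges of $T^{(n)}$ lying outside of the skeleton of the Levitt decomposition of $T$ shrink to zero length, so that quotient volumes converge. This reduces the problem to producing Grushko approximations of each geometric $T^{(n)}$ with $1$-Lipschitz maps and suitable control on quotient volume; a diagonal extraction then yields the desired Lipschitz approximation of $T$.

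So we may assume $T$ is itself geometric with trivial arc stabilizers. Applying Theorem \ref{goa}, I decompose $T$ as a $(G,\mathcal{F})$-graph of actions whose simplicial skeleton $S$ has trivial edge stabilizers (since $T$ has trivial arc stabilizers), and whose dense-orbit vertex trees $T_v$ are geometric $(G_v,\mathcal{F}_{|G_v})$-trees with trivial arc stabilizers. The key step is to approximate each such $T_v$ by Grushko $(G_v,\mathcal{F}_{|G_v})$-trees $T_v^{(k)}$, equipped with $1$-Lipschitz maps $T_v^{(k)}\to T_v$ and with total volumes going to zero. For this I would use the system of isometries associated to $T_v$, which has independent generators by Lemma \ref{lemma-genind}, so the underlying band complex contains no annular component. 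Approximating the transverse measures by rational ones along the lines of Levitt--Paulin then yields simplicial approximations; the absence of annuli is exactly what ensures that these approximations have trivial edge stabilizers, making them genuine Grushko trees, and the scheme can be tuned so that all edge lengths tend to zero.

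Finally I would reassemble: in the graph-of-actions decomposition of $T$, replace each orbit of vertex trees $T_v$ by its Grushko approximation $T_v^{(k)}$. This produces a Grushko $(G,\mathcal{F})$-tree $\widehat{T}^{(k)}$ carrying a natural $1$-Lipschitz $G$-equivariant map to $T$ (identity on the simplicial skeleton and the vertex maps $T_v^{(k)}\to T_v$ on vertex pieces). Its quotient volume equals the skeleton contribution (matching that of $T$) plus the vanishing vertex contributions, and so converges to the quotient volume of $T$. The principal technical obstacle is the band complex approximation step: this is precisely the place where Bestvina--Feighn's original argument encountered difficulties with nonorientable surface components, and where the trivial-arc-stabilizer hypothesis is essential, since it rules out the annuli whose presence would force the rational approximations to have nontrivial edge stabilizers.
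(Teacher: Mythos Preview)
Your overall architecture---reduce to geometric via Theorem~\ref{approx-by-geom-2}, decompose as a graph of actions, approximate the dense-orbit vertex trees, and reassemble via Lemma~\ref{reduction-2}---matches the paper's. The gap is in your treatment of the dense-orbit vertex trees. You propose to approximate each such $T_v$ by ``approximating the transverse measures by rational ones,'' and you assert that the absence of annuli (equivalently, trivial arc stabilizers) forces the resulting simplicial trees to have trivial edge stabilizers. This is where the argument breaks.

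The absence of annuli rules out nontrivial arc stabilizers in $T_v$ itself; it does not control edge stabilizers in a rational approximation. If a minimal component of the band complex is a measured foliation on a compact $2$-orbifold $\sigma$, then any rational approximation of that foliation produces closed leaves, and the dual simplicial tree has edge groups equal to the cyclic subgroups carried by those leaves---these are never trivial. (On a nonorientable $\sigma$ one may even get one-sided leaves, but already two-sided closed leaves destroy the Grushko property.) So rational approximation of surface components cannot produce Grushko trees, regardless of whether the original tree has trivial arc stabilizers. The paper makes exactly this point at the start of Section~\ref{sec-closure}: even in the annulus-free case, the Bestvina--Feighn style argument only yields \emph{very small} simplicial approximations, not Grushko ones.

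The paper's fix is to use the finer dynamical decomposition of Proposition~\ref{goa-geom} rather than the Levitt decomposition of Theorem~\ref{goa}, separating exotic from surface components. Exotic components admit genuine Grushko Lipschitz approximations by Guirardel's pruning/narrowing (this is built into Proposition~\ref{goa-geom}). For surface components one cannot approximate directly; instead one either narrows from an \emph{unused} boundary curve (Definition~\ref{de-unused}), which cuts the foliation into arcs and yields trivial edge stabilizers, or, when no unused curve exists, one invokes Proposition~\ref{outer-limits} to split $T$ over a one-edge $(G,\mathcal{F})$-free splitting and proceeds by induction on $\text{rk}_K(G,\mathcal{F})$. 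Your proposal is missing both the surface/exotic dichotomy and the inductive mechanism that handles the surface case.
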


\subsection{Reduction lemmas}\label{sec-reduction}

To prove Theorem \ref{cv-closure}, we are left showing that every very small minimal $(G,\mathcal{F})$-tree $T$ can be approximated by a sequence of Grushko $(G,\mathcal{F})$-trees. By Theorem \ref{approx-by-geom-2}, we can approximate every minimal, very small $(G,\mathcal{F})$-tree $T$ by a sequence of minimal, very small, geometric $(G,\mathcal{F})$-trees. This approximation is a Lipschitz approximation. If $T$ has dense orbits, then Remark \ref{rk-vol} implies that the quotient volumes of the trees in the approximation converge to the quotient volume of $T$, which is equal to $0$. More generally, in the case where the Levitt decomposition of $T$ has trivial edge stabilizers, then Lemmas \ref{Guirardel-reduction} and \ref{reduction-2} below enable us to find a Lipschitz approximation of $T$ by trees $T_n$ whose quotient volumes converge to the quotient volume of $T$, such that all trees with dense orbits in the Levitt decomposition of the trees $T_n$ are geometric. In this case, the argument from the last paragraph of the proof of Proposition \ref{rank-bnd} shows that the trees $T_n$ themselves are geometric. To complete the proof of Theorems \ref{cv-closure} and \ref{Lip-approx}, we are left understanding how to approximate minimal, very small, geometric $(G,\mathcal{F})$-trees by minimal Grushko $(G,\mathcal{F})$-trees. 

Our proof of Theorems \ref{cv-closure} and \ref{Lip-approx} will make use of the following lemmas, which enable us to approximate very small $(G,\mathcal{F})$-trees that split as graphs of actions, as soon as we are able to approximate the vertex actions. Lemma \ref{reduction-2} is a version of Guirardel's Reduction Lemma in \cite[Section 4]{Gui98}, where we keep track of the fact that the approximations of the trees are Lipschitz approximations. In Lemma \ref{reduction}, we tackle the problem of approximating trees with nontrivial arc stabilizers by Grushko $(G,\mathcal{F})$-trees. Our argument may be seen as an interpretation of Cohen and Lustig's twisting argument for approximating such trees \cite{CL95}. We consider graphs of actions, instead of restricting ourselves to simplicial trees. We first recall Guirardel's Reduction Lemma from \cite[Section 4]{Gui98}. In the statements below, all limits are nonprojective.

\begin{lemma}\label{Guirardel-reduction}(Guirardel \cite[Section 4]{Gui98})
Let $T$ be a very small $(G,\mathcal{F})$-tree that splits as a graph of actions $\mathcal{G}$. Assume that all pointed vertex actions $(T^v,(u_1^v,\dots,u_k^v))$ admit an approximation by a sequence of pointed $(G^v,\mathcal{F}_{|G^v})$-actions $((T^v_n,(u^v_{1,n},\dots,u^v_{k,n})))_{n\in\mathbb{N}}$, in which the approximation points are fixed by the adjacent edge stabilizers. For all $n\in\mathbb{N}$, let $T_n$ be the $(G,\mathcal{F})$-tree obtained by replacing all vertex actions $(T^v,(u_1^v,\dots,u_k^v))$ by their approximation $(T^v_n,(u^v_{1,n},\dots,u^v_{k,n}))$ in $\mathcal{G}$. Then $(T_n)_{n\in\mathbb{N}}$ converges to $T$. 
\end{lemma}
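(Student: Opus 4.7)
The plan is to verify convergence in the equivariant Gromov--Hausdorff topology, which coincides with the axes topology on the space of irreducible minimal $(G,\mathcal{F})$-trees. Fix a finite $K\subset T$, a finite $P\subset G$ and $\epsilon>0$; I would produce, for $n$ large, a finite $K_n\subset T_n$ and a $P$-equivariant $\epsilon$-relation $R_n\subseteq K\times K_n$. The common skeleton $S$ of the transverse coverings of $T$ and $T_n$ (coming from the graph of actions $\mathcal{G}$) lets me decompose every point of $K\cup PK$ as lying either in one of finitely many vertex trees $T^{v_1},\dots,T^{v_\ell}$ (representatives of the finitely many $G$-orbits of vertices of $S$ that are relevant), or on one of finitely many edges of $S$; this is the first step.

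Next, I would use the pointed convergence hypothesis separately in each vertex tree. For each $j$, enlarge $K\cap T^{v_j}$ to include the attaching points $u^{v_j}_1,\dots,u^{v_j}_k$ and pick, using pointed equivariant Gromov--Hausdorff convergence of $(T^{v_j}_n,(u^{v_j}_{i,n}))$ to $(T^{v_j},(u^{v_j}_i))$, a finite subset $K^{v_j}_n\subset T^{v_j}_n$ containing the $u^{v_j}_{i,n}$ together with a $(P\cap G^{v_j})$-equivariant $\epsilon'$-relation, for $\epsilon'$ much smaller than $\epsilon$ (the precise threshold depends only on the cardinality of $K$ and $P$). Because the approximation points $u^{v_j}_{i,n}$ are fixed by the adjacent edge stabilizers, the approximated vertex actions can be reassembled according to the underlying graph of groups of $\mathcal{G}$ to yield precisely the tree $T_n$, and the union over $j$ and over $G$-orbits of these vertex-wise relations, extended by the identity on the edges of $S$ (which have the same lengths in $T$ and in $T_n$), defines the candidate relation $R_n$.

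To verify the $\epsilon$-relation property, take $(x,x_n),(y,y_n)\in R_n$ and $g,h\in P$. The geodesic from $gx$ to $hy$ in $T$ decomposes into a concatenation of arcs, each either lying in a single translate of some $T^{v_j}$ or coinciding with an edge of $S$; the analogous decomposition holds for $gx_n$ to $hy_n$ in $T_n$, with the same sequence of vertex translates and edges. Inside each vertex-tree piece, the distance is approximated within $\epsilon'$ thanks to the $G^{v_j}$-equivariance of the vertex-wise relations and the fact that the endpoints of the arc are either points of the relation or images of attaching points; along edges the lengths agree exactly. The number of arcs in the decomposition is bounded by a constant $N=N(K,P,\mathcal{G})$, so the total error is at most $N\epsilon'<\epsilon$, as required.

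The technical heart of the argument is the compatibility across vertex trees: an error in the relation can only stay small if, when a geodesic crosses the attachment locus of two adjacent vertex trees, the two pointed approximations agree on the corresponding attaching point. This is precisely why the hypothesis forces the approximation points $u^{v}_{i,n}$ to be fixed by adjacent edge stabilizers -- only under this condition can the approximated graph of actions be assembled at all, and only under this condition do the individual vertex-wise $\epsilon'$-relations glue into a global $P$-equivariant $\epsilon$-relation. I expect this gluing, and the verification that $N(K,P,\mathcal{G})$ is finite (which reduces to observing that the projection of $K\cup PK$ to $S$ is a finite subtree meeting only finitely many edges), to be the main point requiring care; everything else is a diagonal extraction and a routine check of the equivariant Gromov--Hausdorff definition.
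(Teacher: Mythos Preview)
The paper does not give its own proof of this lemma: it is stated with the attribution ``(Guirardel \cite[Section 4]{Gui98})'' and used as a black box throughout Section~\ref{sec-reduction}. So there is no in-paper argument to compare against.

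Your sketch is the natural and essentially correct approach to proving such a statement. Working in the equivariant Gromov--Hausdorff topology, decomposing geodesics along the skeleton $S$ of the graph of actions, and bounding the number of pieces by the finiteness of the projection of $K\cup PK$ to $S$ is exactly the right strategy, and is in the spirit of Guirardel's original argument. The one place where you should be a bit more careful is $P$-equivariance when $P$ contains elements that move one vertex tree to another: for $g\in P$ with $gT^{v_j}=T^{v_{j'}}$, you need the vertex-wise relation on $T^{v_{j'}}$ to be the $g$-translate of the one on $T^{v_j}$, not an independently chosen one. You gesture at this with ``the union over $j$ and over $G$-orbits'', and it works once you fix relations only on orbit representatives and transport them equivariantly, but it is worth making that explicit. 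Beyond that, the claim that the geodesic from $gx_n$ to $hy_n$ in $T_n$ visits the same sequence of vertex translates and edges as the geodesic from $gx$ to $hy$ in $T$ holds because both trees share the same skeleton $S$ and the related points lie in corresponding vertex trees by construction; this is the point that makes the gluing go through.
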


We say that a sequence $((T_n,(u_n^1,\dots,u_n^k)))_{n\in\mathbb{N}}$ of pointed $(G,\mathcal{F})$-trees is a \emph{Lipschitz approximation} of a pointed $(G,\mathcal{F})$-tree $(T,(u^1,\dots,u^k))$ if $((T_n,(u_n^1,\dots,u_n^k)))_{n\in\mathbb{N}}$ converges to $(T,(u^1,\dots,u^k))$, and for all $n\in\mathbb{N}$, there exists a $1$-Lipschitz $G$-equivariant map $f_n:T_n\to T$ such that for all $i\in\{1,\dots,k\}$, we have $f_n(u^i_n)=u^i$. Guirardel's Reduction Lemma can be refined in the following way.
  
\begin{lemma}\label{reduction-2}
Let $T$ be a very small $(G,\mathcal{F})$-tree with trivial arc stabilizers, that splits as a $(G,\mathcal{F})$-graph of actions $\mathcal{G}$. If all pointed vertex trees $(T^v,(u_1^v,\dots,u_k^v))$ of $\mathcal{G}$ admit Lipschitz approximations by pointed Grushko $(G^v,\mathcal{F}_{|G^v})$-trees, in which the approximation points $u^v_{1,n},\dots,u^v_{k,n}$ are fixed by the adjacent edge stabilizers, then $T$ admits a Lipschitz approximation by Grushko $(G,\mathcal{F})$-trees. 
\qed
\end{lemma}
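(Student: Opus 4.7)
The plan is to apply Guirardel's Reduction Lemma (Lemma \ref{Guirardel-reduction}) directly with the given Lipschitz approximations, and then to verify the two additional features: that the resulting trees are actually Grushko $(G,\mathcal{F})$-trees, and that the gluing of the vertex Lipschitz maps is itself a $1$-Lipschitz $G$-equivariant map. Pick representatives of the $G$-orbits of vertices of the Bass--Serre tree of the underlying graph of groups of $\mathcal{G}$, and for each such representative $v$ let $(T^v_n,(u^v_{1,n},\dots,u^v_{k,n}))$ denote a pointed Lipschitz approximation of $(T^v,(u^v_1,\dots,u^v_k))$ by Grushko $(G^v,\mathcal{F}_{|G^v})$-trees, with witness maps $f^v_n:T^v_n\to T^v$; extend equivariantly to all $G$-translates. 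Let $T_n$ be the $(G,\mathcal{F})$-tree obtained by replacing each $T^v$ by $T^v_n$ in the graph of actions $\mathcal{G}$, keeping the same skeleton, the same edge lengths, and the prescribed attaching points. By Lemma \ref{Guirardel-reduction}, $T_n$ converges to $T$.

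I next check that each $T_n$ is a Grushko $(G,\mathcal{F})$-tree. Because $T$ has trivial arc stabilizers and the skeleton of the transverse covering underlying $\mathcal{G}$ sits as a subtree of $T$ after collapsing the vertex trees, every edge stabilizer of the graph of groups defining $\mathcal{G}$ fixes an arc of $T$ and is therefore trivial. The tree $T_n$ is then simplicial with the skeleton of $\mathcal{G}$ as its edges, into which the vertex approximations $T^v_n$ (themselves simplicial and Grushko) are glued at the prescribed attaching points. Its edge stabilizers are either edge stabilizers of the skeleton (trivial) or edges of some $T^v_n$ (trivial since $T^v_n$ is Grushko). Its nontrivial point stabilizers are conjugates of the peripheral subgroups of $(G^v,\mathcal{F}_{|G^v})$, and these are precisely the conjugates of the $G_i$'s in $G$. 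Minimality follows from the minimality of each $T^v_n$ combined with the fact that gluing along trivial edge stabilizers of a connected skeleton preserves minimality. Hence $T_n\in\mathcal{O}(G,\mathcal{F})$.

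The Lipschitz map $f_n:T_n\to T$ is defined by setting $f_n$ to equal $f^v_n$ on each copy of $T^v_n$ sitting in $T_n$ (extended equivariantly), and the identity on the common skeleton (which has identical edges and edge lengths in $T_n$ and in $T$). The hypothesis that the approximation points $u^v_{j,n}$ lie at the attaching points fixed by the adjacent edge stabilizers guarantees that the pieces match up at the gluing points, so $f_n$ is well defined and $G$-equivariant. To see that $f_n$ is $1$-Lipschitz, note that both $T_n$ and $T$ are obtained from identical skeletons by grafting vertex trees at the same attaching points; consequently, the geodesic between any two points $x,y\in T_n$ decomposes as an alternating concatenation of segments inside the vertex approximations and segments along the skeleton, and its image under $f_n$ is a concatenation along the same sequence of pieces, each piece being at most as long as the corresponding piece in $T_n$ (by the fact that $f^v_n$ is $1$-Lipschitz on vertex trees and the identity on the skeleton). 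Summing yields $d_T(f_n(x),f_n(y))\le d_{T_n}(x,y)$.

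The only subtle point is the gluing step, ensuring that the globally defined $f_n$ remains $1$-Lipschitz across transitions between vertex trees and skeleton edges. This works because of the specific graph of actions structure: in both $T_n$ and $T$, the graph of actions decomposition yields a canonical additivity of distances along geodesics through attaching points, so the piecewise $1$-Lipschitz estimate promotes to a global one. Finally, to get the statement of Theorem \ref{Lip-approx} about quotient volumes, observe that the quotient volume is determined by the edge lengths of the skeleton, which are identical in $T_n$ and $T$; therefore the quotient volumes of the trees $T_n$ constructed here are equal to that of $T$, and in particular converge to it.
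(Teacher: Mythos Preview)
Your approach is exactly what the paper intends: the lemma is marked with \qed\ immediately after its statement, since it is a direct refinement of Guirardel's Reduction Lemma (Lemma~\ref{Guirardel-reduction}) where one additionally tracks that the glued trees are Grushko and that the vertex Lipschitz maps patch to a global one. Your verification of these two points is along the right lines.

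Two minor remarks. First, your sentence ``every edge stabilizer of the graph of groups defining $\mathcal{G}$ fixes an arc of $T$ and is therefore trivial'' does not literally cover edges of length~$0$ in the graph of actions, since these are collapsed and contribute no arc. The conclusion is still fine: positive-length skeleton edges have trivial stabilizer by your argument, length-$0$ edges are collapsed in $T_n$, and the remaining edges of $T_n$ come from the vertex Grushko trees and hence have trivial stabilizer; moreover the hypothesis that the attaching points in the Grushko approximations are fixed by the edge groups forces any nontrivial edge group to be peripheral, so the point stabilizers work out. Second, your final paragraph on quotient volumes is not part of Lemma~\ref{reduction-2} (the statement says nothing about volumes), and the claim there is not quite right: the quotient volume of $T_n$ also includes the volumes of the vertex Grushko trees $T^v_n$, not just the skeleton edges. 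Convergence of quotient volumes therefore requires the extra hypothesis that the $T^v_n$ have quotient volumes tending to those of the $T^v$, which is how the paper argues when invoking this lemma in the proof of Theorem~\ref{Lip-approx}.
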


\begin{lemma}\label{reduction}
Let $T$ be a minimal, very small $(G,\mathcal{F})$-tree, that splits as a $(G,\mathcal{F})$-graph of actions over a one-edge $(G,\mathcal{F})$-free splitting (where the vertex actions need not be minimal). If the minimal subtrees of all vertex trees of $\mathcal{G}$ (with respect to the action of their stabilizer $G^v$) admit approximations by minimal Grushko $(G^v,\mathcal{F}_{|G^v})$-trees, then $T$ admits an approximation by minimal Grushko $(G,\mathcal{F})$-trees. 
\end{lemma}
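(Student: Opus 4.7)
The plan is to adapt Cohen--Lustig's twisting argument to the graph-of-actions setting. I will focus on the amalgamated case $G=A\ast B$; the HNN case is analogous. Write the given graph of actions as $\mathcal{G}$, with vertex trees $T^A,T^B$, attaching points $p^A,p^B$, and edge of length $\ell$. For each vertex $v$, let $q^v\in(T^v)^{\min}$ be the nearest-point projection of $p^v$, set $\delta^v:=d(p^v,q^v)\ge 0$, and let $\langle a_v\rangle\le G^v$ denote the (cyclic non-peripheral, possibly trivial) stabilizer of the tail $[q^v,p^v]$.

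The key step will be to upgrade the given Grushko approximation $T^v_n$ of $(T^v)^{\min}$ to a pointed Grushko approximation $(T^v_n, p^v_n)$ that mimics the attaching point $p^v$. If $\delta^v=0$, I will simply take $p^v_n:=q^v_n$, a basepoint of $T^v_n$ approximating $q^v$. If $\delta^v>0$ and $a_v=1$, I will attach an equivariant tail of length $\delta^v$ to $T^v_n$ at $q^v_n$ and let $p^v_n$ be its far endpoint. If $\delta^v>0$ and $a_v\neq 1$, then Lemmas~\ref{lemma-limit-good}--\ref{lemma-good-2} force $a_v$ to be hyperbolic in $T^v_n$ with $||a_v||_{T^v_n}\to 0$; I then apply the Cohen--Lustig twist by choosing $q^v_n$ on the axis of $a_v$ and setting $p^v_n:=a_v^{k^v_n}q^v_n$, with exponents $k^v_n\in\mathbb{Z}$ chosen so that $k^v_n\cdot||a_v||_{T^v_n}\to\delta^v$. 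The crucial feature of the twist is that, for every fixed $i\in\mathbb{Z}$, the point $a_v^i p^v_n = a_v^{i+k^v_n}q^v_n$ also converges to $p^v$, so that in the limit the infinitely many distinct points of the $\langle a_v\rangle$-orbit of $p^v_n$ coalesce into the single branch point $p^v$. Finally, let $T_n$ be the graph of actions with vertex trees $T^v_n$, attaching points $p^v_n$, and edge of length $\ell$.

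Verifying that each $T_n$ is a minimal Grushko $(G,\mathcal{F})$-tree is straightforward: the $T^v_n$ are Grushko, the inserted edge has trivial stabilizer, the point stabilizers of $T_n$ are thus peripheral or trivial, and joining minimal vertex trees through a positive-length edge keeps the result minimal. It remains to show $||g||_{T_n}\to||g||_T$ for every $g\in G$. For $g$ elliptic in the Bass--Serre tree this reduces to $||g||_{T^v_n}\to||g||_{(T^v)^{\min}}$, which is given. For $g$ hyperbolic, I will decompose one period of the axis of $g$ into edge crossings and vertex-tree traversals, and check that every traversal contribution $d_{T^A_n}(h_1 p^A_n, h_2 p^A_n)$ converges to the corresponding $d_{T^A}(h_1 p^A, h_2 p^A)$. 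The case $h_1^{-1}h_2\notin\langle a_A\rangle$ follows directly from pointed convergence of $(T^A_n, q^A_n)$. The hard part, and the reason the twist is needed at all, is the case $h_1^{-1}h_2=a_A^i$: here the axis in $T$ enters and exits $T^A$ at the same branch point $p^A$ and contributes $0$, while an untwisted construction would produce a spurious $2\delta^A$ per such traversal. Under the twist, the corresponding $T^A_n$-contribution becomes $|i|\cdot||a_A||_{T^A_n}\to 0$, matching $T$ exactly.
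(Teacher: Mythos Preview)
Your approach is the same as the paper's: both implement Cohen--Lustig's twist by placing the new attaching point on the axis of $a_A$ in the approximating Grushko tree, and your choice $p^A_n:=a_A^{k_n}q^A_n$ is equivalent (up to a bounded shift along the axis) to the paper's fold of a length-$\delta^A$ segment onto $\mathrm{Axis}(a_A)$. The gap is in your convergence argument for traversals with $h:=h_1^{-1}h_2\notin\langle a_A\rangle$. This does \emph{not} follow from pointed convergence of $(T^A_n,q^A_n)$: the basepoint $p^A_n=a_A^{k_n}q^A_n$ involves $k_n\to\infty$, so you are asking about $d_{T^A_n}(q^A_n,a_A^{-k_n}ha_A^{k_n}q^A_n)$ for a varying group element, and the limit genuinely depends on the sign of $k_n$. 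If $a_A$ fixes a nondegenerate arc $[q^A,w^A]$ in $(T^A)^{\min}$ and you twist in the direction of $w^A_n$ with $\delta^A<d(q^A,w^A)$, then $p^A_n$ sits between $q^A_n$ and $w^A_n$, and for an $h$ with $q^A,w^A,hw^A,hq^A$ aligned one computes $d_{T^A_n}(p^A_n,hp^A_n)\to d(q^A,hq^A)-2\delta^A$ rather than $2\delta^A+d(q^A,hq^A)$. So without specifying the direction, the trees $T_n$ need not converge to $T$.

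The paper handles this by (i) folding explicitly in the direction \emph{away} from $w^A_n$, where $w^A$ is the other endpoint of $\mathrm{Fix}(a_A)\cap(T^A)^{\min}$ (an arbitrary point if that fixed set is a single point), and (ii) proving pointed convergence $(T^A_n,u^A_n)\to(T^A,p^A)$ by a different route: a $1$-Lipschitz map from the unfolded tree gives the upper bound and forces the limit map $g^A:T^A\to T^A_\infty$ to be an isometry on $(T^A)^{\min}$ that can only shrink the tail or fold it over $[q^A,w^A]$; one then rules this out with a \emph{single} hyperbolic $h$ (chosen so that $w^A\in[q^A,hq^A]$) for which the direction choice makes $d_{T^A_n}(u^A_n,hu^A_n)$ visibly equal to $2\delta^A+d(q^A,hq^A)+o(1)$. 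You could patch your argument along the same lines, but ``follows directly from pointed convergence'' is too quick. Note also that ``the HNN case is analogous'' hides real case analysis: when the two tails have conjugate stabilizers (the paper's Cases~2.2 and~2.3) one must twist them in coordinated directions along the common axis, which is not a straightforward copy of the amalgam argument.
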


Figures \ref{fig-1-shape}, \ref{fig-approx-21}, and \ref{fig-approx-23} provide examples of trees for which the vertex actions of the splitting are not minimal (but they are minimal in the sense of pointed trees when we keep track of the attaching points). These are the crucial cases of Lemma \ref{reduction}, in which we deal with the problem of approximating trees with nontrivial arc stabilizers. Considering non-minimal vertex actions is crucial to deal with the simplicial case in Theorem \ref{cv-closure} (when there are edges with nontrivial stabilizers), and Lemma \ref{reduction} provides a new interpretation of Cohen and Lustig's argument for dealing with this case. Lemma~\ref{reduction} will also be crucial for dealing with the case of geometric actions of surface type containing nontrivial arc stabilizers. 

\begin{proof}
We will provide a detailed argument in the case where the $(G,\mathcal{F})$-free splitting $S$ is a free product, and explain how to adapt the argument to the case of an HNN extension.
\\ 
\\
\textit{Case 1} : The splitting $S$ is of the form $G=A\ast B$.
\\
\\
The following description of $\mathcal{G}$ is illustrated in Figure \ref{fig-1-shape}. We denote by $L$ the length of the edge of $\mathcal{G}$, which might be equal to $0$. Denote by $T^A$ and $T^B$ the vertex trees of $\mathcal{G}$, and by $u^A\in T^A$ and $u^B\in T^B$ the corresponding attaching points. The trees $T^A$ and $T^B$ may fail to be minimal, we denote by $T^A_{min}$ and $T^B_{min}$ their minimal subtrees. Up to enlarging $L$ if necessary, we can assume that the set $T^A\smallsetminus T^A_{min}$ is either empty (in the case where $T^A$ is minimal), or consists of the orbit of a single point in the closure of $T^{A}_{min}$, or consists of the orbit of a nondegenerate half-open arc with nontrivial stabilizer. 

\begin{figure}
\begin{center}
\input{1-shape.pst}
\caption{The splitting of $T$ as a graph of actions in Case 1 of the proof of Lemma \ref{reduction}.}
\label{fig-1-shape}
\end{center}
\end{figure}

We will explain how to approximate the tree $(T^A,u^A)$ by a sequence of pointed Grushko $(A,\mathcal{F}_{|A})$-trees. By approximating the pointed tree $(T^B,u^B)$ in the same way, our claim then follows from Guirardel's Reduction Lemma (Lemma \ref{Guirardel-reduction}). 

If $T^A$ is minimal, we can approximate $(T^A,u^A)$ by a sequence of pointed Grushko $(A,\mathcal{F}_{|A})$-trees $(T^A_n,u_n^A)$ by assumption (by choosing $u_n^A$ to be an approximation of $u^A$ in the tree $T^A_n$, provided by the definition of the equivariant Gromov--Hausdorff topology). This also remains true in the case where $T^A\smallsetminus T^A_{min}$ consists of the orbit of a single point $u^A$ in the closure of $T^{A}_{min}$. Indeed, in this case, we can first approximate $u^A$ by a sequence of points $(u'_n)_{n\in\mathbb{N}}\in (T^A_{min})^{\mathbb{N}}$, and then choose for each $n\in\mathbb{N}$ an approximation $u_n^A$ of $u'_n$ in an approximation of $T^A_{min}$.

We now assume that $\overline{T^A\smallsetminus T^A_{min}}$ consists of the orbit of a nondegenerate arc $[u^A,v^A]$ with nontrivial stabilizer, whose length we denote by $l^A$. We choose the notations so that $v^A\in T^A_{min}$. We will also first assume that $T^A_{min}$ is not reduced to a point. As $T$ is very small, the stabilizer $\langle c^A\rangle$ of the arc $[u^A,v^A]$ is cyclic, closed under taking roots, and non-peripheral. As tripod stabilizers are trivial in $T$, the point $v^A$ is an endpoint of the subarc of $T^A_{min}$ fixed by $c^A$. If this arc is nondegenerate, then we let $w^A$ be its other endpoint. Otherwise, we let $w^A$ be any point that is not equal to $v^A$. 

Let $(T^A_{min,n})_{n\in\mathbb{N}}$ be an approximation of $T^A_{min}$ by minimal Grushko $(A,\mathcal{F}_{|A})$-trees. Denote by $v^A_n$ (respectively $w^A_n$) an approximation of $v^A$ (resp. $w^A$) in the tree $T^A_{min,n}$, provided by the definition of convergence in the equivariant Gromov--Hausdorff topology. We can assume that for all $n\in\mathbb{N}$, the point $v^A_n$ belongs to the axis of $c^A$ in $T^A_{min,n}$.   

We refer to Figure \ref{fig-1-shape-2} for an illustration of the following construction. For all $n\in\mathbb{N}$, let $(T_n^A,u_n^A)$ be the pointed tree obtained from $(T^A,u^A)$ in the following way. We start by equivariantly unfolding the arc $[u^A,v^A]$ to obtain a tree $\widetilde{T}^A$ that contains an edge $e^0$ of length $l^A$ with trivial stabilizer. We then equivariantly replace the pointed tree $(T^A_{min},v^A)$ in the graph of actions defining $\widetilde{T}^A$ by its approximation $(T^A_{min,n},v^A_n)$, to get a tree $\widetilde{T_n}^A$. Finally, we define the tree $(T_n^A,u_n^A)$ in the following way: the tree $(T_n^A,u_n^A)$ is obtained from $(\widetilde{T_n}^A,u^A)$ by fully folding the edge $e^0$ along the axis of $c^A$ in $T^A_{min,n}$, in a direction that does not contain $w_n^A$. We denote by $f^A_n:(\widetilde{T_n}^A,u^A)\to (T^A_n,u_n^A)$ the folding map. 

\begin{figure}
\begin{center}
\def\JPicScale{.95}
\input{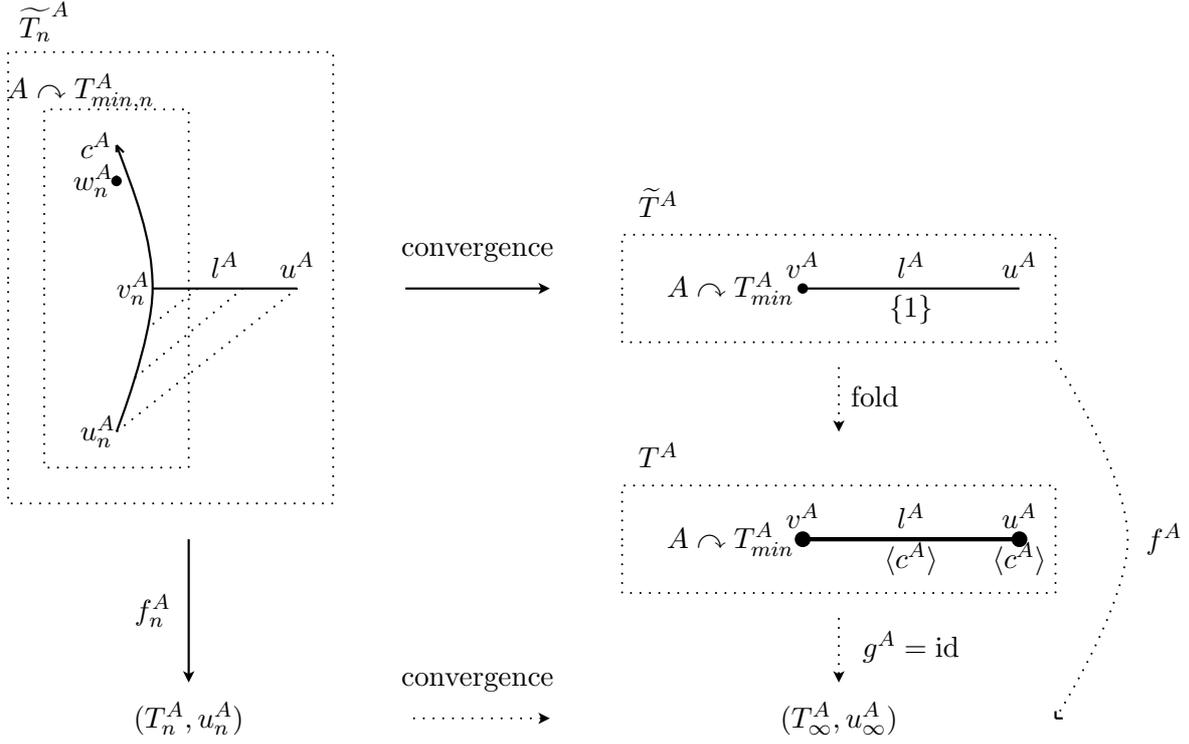}
\caption{The situation in Case 1 of the proof of Lemma \ref{reduction}.}
\label{fig-1-shape-2}
\end{center}
\end{figure}

We now prove that the pointed trees $(T^A_n,u_n^A)$ converge to $(T^A,u^A)$. Lemma \ref{Guirardel-reduction} implies that the trees $(\widetilde{T_n}^A,u^A)$ converge to $(\widetilde{T}^A,u^A)$. For all $n\in\mathbb{N}$, there is a $1$-Lipschitz $G$-equivariant map $f^A_n:(\widetilde{T_n}^A,u^A)\to (T^A_n,u_n^A)$. This implies that for all $g\in G$, we have $d_{T_n^A}(u_n^A,gu_n^A)\le d_{\widetilde{T_n}^A}(u^A,gu^A)$. Therefore, up to possibly passing to a subsequence, the pointed trees $(T_n^A,u_n^A)$ converge to a pointed tree $(T_{\infty}^A,u_{\infty}^A)$ in the Gromov--Hausdorff equivariant topology, that is minimal in the sense of pointed $G$-trees. Proposition \ref{pointed-convergence} shows that there exists a $1$-Lipschitz map $f^A:(\widetilde{T}^A,u^A)\to (\overline{T^A_{\infty}},u^A_{\infty})$, where $\overline{T^A_{\infty}}$ denotes the metric completion of $T^A_{\infty}$. We will show that $f^A$ factors through a map $g^A:(T^A,u^A)\to (\overline{T^A_{\infty}},u^A_{\infty})$, and that $g^A$ is an isometry between $(T^A,u^A)$ and $(T^A_{\infty},u^A_{\infty})$. This will imply that the pointed trees $(T^A_n,u_n^A)$ converge to $(T^A,u^A)$.

We first notice that for all $n\in\mathbb{N}$, the map $f^A_n$ is an isometry in restriction to $T^{A}_{min,n}$. By taking limits, this implies that the $A$-minimal subtree $T^{A}_{min}$ of $T^A$ isometrically embeds into $T^A_{\infty}$. In addition, for all $n\in\mathbb{N}$, the point $u_n^A$ belongs to the axis of $c^A$ in $T_n$. By definition of the equivariant Gromov--Hausdorff topology on the set of pointed $(G,\mathcal{F})$-trees, this implies that $c^A$ fixes $u^A_{\infty}$ in $T_{\infty}^A$. Similarly, the element $c^A$ fixes all points of the image of $[u^A,v^A]$ in $T^A$. Therefore, the map $f^A$ factors through a map $g^A:(T^A,u^A)\to (\overline{T^A_{\infty}},u^A_{\infty})$. As $T^{A}_{min}$ isometrically embeds into $T^A_{\infty}$, the map $g^A$ can only decrease the length of the segment $[u^A,v^A]$, and fold this segment over a subarc of $[v^A,w^A]$. 

Let $g\in A$ be an element that is hyperbolic in $T^A_{min}$ (we recall that we have assumed $T^A_{min}$ not to be reduced to a point), such that $d_{T^A}(v^A,gv^A)=d_{T^A}(w^A,gw^A)+2d_{T^A}(v^A,w^A)$. In particular, we have $d_{T^A}(u^A,gu^A)=d_{T^A}(w^A,gw^A)+2l^A+2d_{T^A}(v^A,w^A)$. Using the fact that we folded in a direction that did not contain $w_n$, together with the definition of the equivariant Gromov--Hausdorff topology, we get that the distance $d_{T_n^A}(u_n^A,gu_n^A)$ gets arbitrarily close to $d_{T_n^A}(w_n^A,gw_n^A)+2l^A+2d_{T^A}(v^A,w^A)$ as $n$ tends to $+\infty$, so $d_{T_{\infty}^A}(u_{\infty}^A,gu_{\infty}^A)=d_{T^A}(w^A,gw^A)+2l^A+2d_{T^A}(v^A,w^A)$. This implies that $g^A$ is an isometry from $(T^A,u^A)$ to $(T^A_{\infty},u^A_{\infty})$, and we are done.

If $T^A_{min}$ is reduced to a point, then it can be approximated by a sequence $(T^A_{min,n})_{n\in\mathbb{N}}$ of Grushko $(A,\mathcal{F}_{|A})$-trees, where all edge lengths are equal to $\frac{1}{n}$, which we choose to be all homothetic to each other. We also choose two distinct constant sequences $v_n^A$ and $w_n^A$ in the trees $T^A_{min,n}$, and construct the trees $T_n^A$ as above. Let $g\in A$ be any element such that $d_{T_n^A}(v_n^A,gv_n^A)=d_{T_n^A}(w_n^A,gw_n^A)+2d_{T_n^A}(v_n^A,w_n^A)$ for all $n\in\mathbb{N}$. Arguing similarly as above, we get that $d_{T^A_{\infty}}(u^A_{\infty},gu_{\infty}^A)=2l^A$. This again implies that the map $g^A$ defined as above is an isometry.
\\
\\
\textit{Case 2}: The splitting $S$ is of the form $G=C\ast$.
\\
\\
The vertex tree $T^C$ of $\mathcal{G}$ may fail to be minimal. We denote by $u_1$ and $u_2$ two points in $T^C$ in the orbits of the attaching points (the points $u_1$ and $u_2$ may belong to the same $G$-orbit). We denote by $v_1$ and $v_2$ their projections to the closure $\overline{T^C_{min}}$ of the $C$-minimal subtree of $T$. One of the following cases occurs.
\\
\\
\textit{Case 2.1}: The segments $[u_1,v_1]$ and $[u_2,v_2]$ are nondegenerate, and their stabilizers are nontrivial and nonconjugate in $C$.
\\
In other words, the tree $T$ splits as a graph of actions that has the shape displayed on Figure \ref{fig-approx-21}, where $l_1,l_2>0$, and the stabilizers $\langle c_1\rangle$ and $\langle c_2\rangle$ are nonconjugate. We allow the case where $v_1$ and $v_2$ belong to the same $G$-orbit. For all $i\in\{1,2\}$, we let $w_i$ be such that $[v_i,w_i]$ is the maximal arc fixed by $c_i$ in ${T_{min}^C}$, if this arc is nondegenerate, and we let $w_i$ be any point distinct from $v_i$ otherwise (as in Case 1, one has to slightly adapt the argument when $T_{min}^C$ is reduced to a point). Let $\widetilde{T}^C$ be the tree obtained from $T^C$ by replacing the edges $[u_1,v_1]$ and $[u_2,v_2]$ by edges of the same length with trivial stabilizer. For all $i\in\{1,2\}$, let $v_{n,i}$ (resp. $w_{n,i}$) be an approximation of $v_i$ (resp. $w_i$) in an approximation of ${T^C_{min}}$. We can assume $v_{n,i}$ to belong to the translation axis of $c_i$. Let $(\widetilde{T_n}^C,u_{1},u_{2})$ be the approximation of $(\widetilde{T}^C,u_1,u_2)$ obtained from an approximation of $T^C_{min}$ by adding an edge of length $l_1$ (resp. $l_2$) with trivial stabilizer at $v_{n,1}$ (resp. $v_{n,2}$). Let $T_n^C$ be the tree obtained from $\widetilde{T_n}^C$ by $G$-equivariantly fully folding the edge $[u_{i},v_{n,i}]$ along the axis of $c_i$, in a direction that does not contain $w_{n,i}$, for all $i\in\{1,2\}$. We denote by $f_n^C:\widetilde{T_n}^C\to T_n^C$ the corresponding morphism. Arguing as in Case 1, one shows that the trees $(T_n^C,f_n^C(u_{1}),f_n^C(u_{2}))$ converge to $(T^C,u_1,u_2)$. Let now $T_n$ be the tree obtained by replacing $(T^C,u_1,u_2)$ by its approximation $(T_n^C,f_n^C(u_{1}),f_n^C(u_{2}))$ in the graph of actions $\mathcal{G}$. Lemma \ref{Guirardel-reduction} implies that the trees $T_n$ converge to $T$.

\begin{figure}
\begin{center}
\input{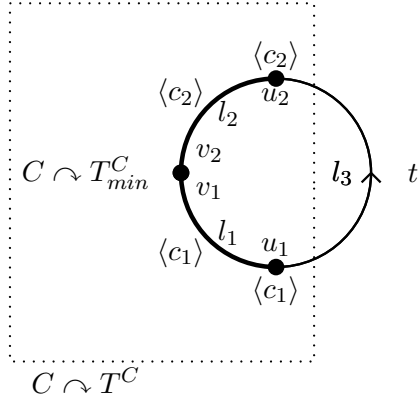}
\caption{The splitting of $T$ as a graph of actions in Cases 2.1, 2.2 and 2.5 of the proof of Lemma \ref{reduction}.}
\label{fig-approx-21}
\end{center}
\end{figure}

~
\\
\noindent \textit{Case 2.2}: The segments $[u_1,v_1]$ and $[u_2,v_2]$ are nondegenerate and have nontrivial stabilizers that are conjugate in $C$, and no two nondegenerate subsegments of $[u_1,v_1]$ and $[u_2,v_2]$ belong to the same $G$-orbit.
\\
Again, the tree $T$ splits as a graph of actions that has the shape displayed on Figure~\ref{fig-approx-21}, where this time the groups $\langle c_1\rangle$ and $\langle c_2\rangle$ are conjugate. Up to a good choice of the stable letter $t$, we can assume that $\langle c_1\rangle=\langle c_2\rangle$. As tripod stabilizers are trivial in $T$, the segment $[v_1,v_2]$ is the maximal arc fixed by $c_1$ in $T^C_{min}$. Again, we let $\widetilde{T}^C$ be the tree obtained from $T^C$ by replacing the edges $[u_1,v_1]$ and $[u_2,v_2]$ by edges of the same length with trivial stabilizer. For all $i\in\{1,2\}$, let $v_{n,i}$ be an approximation of $v_i$ in an approximation of $\overline{T^C_{min}}$ by minimal Grushko $(C,\mathcal{F}_{|C})$-trees, which we can assume to belong to the translation axis of $c_1$. Let $(\widetilde{T_n}^C,u_{1},u_{2})$ be the approximation of $(\widetilde{T}^C,u_1,u_2)$ obtained from an approximation of $T^C_{min}$ by adding an edge of length $l_1$ (resp. $l_2$) with trivial stabilizer at $v_{n,1}$ (resp. $v_{n,2}$). The tree $T_n^C$ is then obtained from $\widetilde{T_n}^C$ by $G$-equivariantly fully folding the edges $[u_{1},v_{n,1}]$ and $[u_{2},v_{n,2}]$ along the axis of $c_1$ in opposite directions. The folding directions should not contain the segment $[v_{n,1},v_{n,2}]$, in case this segment is nondegenerate. Again, denoting by $f_n^C:\widetilde{T_n}^C\to T_n^C$ the corresponding morphism, the trees $(T_n^C,f_n^C(u_{1}),f_n^C(u_{2}))$ converge to $(T^C,u_1,u_2)$. The trees $T_n$ obtained by replacing $(T^C,u_1,u_2)$ by $(T_n^C,f_n^C(u_1),f_n^C(u_2))$ in $\mathcal{G}$ converge to $T$.
\\
\\
\textit{Case 2.3}: Some nondegenerate subsegments of $[u_1,v_1]$ and $[u_2,v_2]$ belong to the same $G$-orbit, and their common stabilizer is nontrivial.
\\
Using the fact that tripod stabilizers in $T$ are trivial, we can assume that $v_1=v_2$ (and we let $v:=v_1=v_2$), and that $[u_1,v]\subseteq [u_2,v]$. The tree $T$ splits as a graph of actions that has the form displayed on Figure \ref{fig-approx-23}. We let $w$ be such that $[v,w]$ is the maximal arc fixed by $c$ in ${T_{min}^C}$ if this arc is nondegenerate, and choose any $w\neq v$ otherwise (as in Case 1, one has to slightly adapt the argument if $T^C_{min}$ is reduced to a point). Let $\widetilde{T}^C$ be the tree obtained from $T$ by replacing the segment $[u_2,v]$ by a segment of same length $l_1+l_2$ with trivial stabilizer. Let $v_{n}$ (resp. $w_{n}$) be an approximation of $v$ (resp. $w$) in an approximation $Y_n$ of $\overline{T^C_{min}}$. We can assume $v_n$ to belong to the translation axis of $c$ in $Y_n$. Let $(\widetilde{T_n}^C,u_2)$ be the approximation of $(\widetilde{T}^C,u)$ obtained from $Y_n$ by adding an edge of length $l_1+l_2$ with trivial stabilizer at $v_{n}$. The tree $T_n^C$ is then obtained from $\widetilde{T_n}^C$ by $G$-equivariantly fully folding the edge $[u_2,v_{n}]$ along the axis of $c$, in a direction that does not contain $w_{n}$. Denoting by $f_n^C:\widetilde{T_n}^C\to T_n^C$ the corresponding morphism, the trees $(T_n^C,f_n^C(u_1),f_n^C(u_2))$ converge to $(T^C,u_1,u_2)$. Again, the trees $T_n$ obtained by replacing $(T^C,u_1,u_2)$ by $(T_n^C,f_n^C(u_1),f_n^C(u_2))$ in $\mathcal{G}$ converge to $T$.

\begin{figure}
\begin{center}
\input{approx-23.pst}
\caption{The splitting of $T$ as a graph of actions in Case 2.3 of the proof of Lemma \ref{reduction}.}
\label{fig-approx-23}
\end{center}
\end{figure}

~
\\
\textit{Case 2.4}: Some nondegenerate subsegments of $[u_1,v_1]$ and $[u_2,v_2]$ belong to the same $G$-orbit, and they have trivial stabilizer.
\\
Then $T$ splits as a graph of actions of the form displayed on Figure \ref{fig-approx-24}. This case may be viewed as a particular case of Case 1.
\\
\\
\textit{Case 2.5}: One of the subsegments $[u_1,v_1]$ or $[u_2,v_2]$ is degenerate. 
\\
This case is treated in a similar way as Case 2.1, and left to the reader.
\end{proof}

\begin{figure}
\begin{center}
\input{approx-24.pst}
\caption{The splitting of $T$ as a graph of actions in Case 2.4 of the proof of Lemma \ref{reduction}.}
\label{fig-approx-24}
\end{center}
\end{figure}

\subsection{Dynamical decomposition of a geometric very small $(G,\mathcal{F})$-tree}\label{sec-dyn}

Every geometric very small $(G,\mathcal{F})$-tree splits as a graph of actions, which has the following description.

\begin{prop}\label{goa-geom}
Any very small geometric $(G,\mathcal{F})$-tree $T$ splits as a graph of actions $\mathcal{G}$, where for each nondegenerate vertex action $Y$, with vertex group $G_Y$, and attaching points $v^1,\dots,v^k$ fixed by subgroups $H^1,\dots,H^k$, either 
\begin{itemize}
\item the tree $Y$ is an arc containing no branch point of $T$ except at its endpoints, or  
\item the group $G_Y$ is the fundamental group of a $2$-orbifold with boundary $\Sigma$ holding an arational measured foliation, and $Y$ is dual to $\widetilde{\Sigma}$, or
\item there exists a Lipschitz approximation of $Y$ by pointed Grushko $(G_Y,\mathcal{F}_{|G_Y})$-trees $(Y_n,(v^1_n,\dots,v^k_n))$, whose quotient volumes converge to $0$, such that for all $n\in\mathbb{N}$, there exists a morphism $f_n:Y_n\to Y$, with $f_n(v^i_n)=v^i$ for all $i\in\{1,\dots,k\}$, and $v^i_n$ is fixed by $H^i$ for all $i\in\{1,\dots,k\}$ and all $n\in\mathbb{N}$.
\end{itemize}
\end{prop}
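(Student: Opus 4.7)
The plan is to apply the Rips decomposition machinery to the geometric tree $T$ and analyze the three possible types of pieces that arise. Since $T$ is geometric, I fix a finite subtree $K\subseteq T$ with $T = T_K$, yielding a system of isometries $\mathcal{K} = (F,\Phi)$ on the finite forest $F = X/G$ and its associated foliated band complex $\Sigma$ (Section \ref{sec-systems}). The Rips machine, as developed by Bestvina--Feighn and adapted to systems of isometries by Gaboriau--Levitt and Guirardel, decomposes $\Sigma$ into minimal sub-band-complexes of interval-exchange (surface), Levitt (thin), or axial type, joined together by simplicial strata. Passing to the leaf space, this yields a $G$-invariant transverse covering of $T$, which by Proposition \ref{skeleton} translates into a graph-of-actions decomposition $\mathcal{G}$ of $T$.

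The simplicial strata contribute arc-type vertex actions, which after possibly subdividing at interior branch points of $T$ realize the first bullet. For the minimal components, the axial case cannot occur: an axial component would yield a dense action on a line with non-trivial arc stabilizer, but by Proposition \ref{dense-arcs} every dense-orbit vertex tree of our very small geometric $T$ has trivial arc stabilizers. Interval-exchange components are dual to measured foliations on $2$-orbifolds with boundary; a reducing curve for the foliation would refine the transverse covering further, so in the finest Rips decomposition each surface foliation is arational, giving the second bullet.

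The main obstacle is producing the Lipschitz approximation in the third (Levitt/thin) case. Let $Y$ be such a vertex tree, with attaching points $v^1,\dots,v^k$ fixed by $H^1,\dots,H^k\subseteq G_Y$. Because $Y$ has dense $G_Y$-orbits and is geometric, its arc stabilizers are trivial (Proposition \ref{dense-arcs}), so its system of isometries has independent generators (Lemma \ref{lemma-genind}). I would then follow the Rips-machine approximation procedure from Gaboriau--Levitt \cite[Section II]{GL95}, in which one iteratively subdivides the bases and fully unfolds thin bands to produce a sequence of simplicial trees $Y_n$ converging to $Y$. By choosing the base subtrees $K_n$ so as to contain each $v^i$ together with a fundamental domain for the peripheral subgroup $H^i$ (any cyclic non-peripheral $H^i$ being ruled out by the graph-of-actions structure of the Rips decomposition, since such an $H^i$ would have to be hyperbolic in the approximating Grushko trees), one ensures that the canonical approximation points $v^i_n\in Y_n$ are fixed by $H^i$ and that each $Y_n$ is a Grushko $(G_Y,\mathcal{F}_{|G_Y})$-tree. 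The resolution morphisms $f_n\colon Y_n\to Y$ provided by Theorem \ref{pseudogroup-tree} are $G_Y$-equivariant, $1$-Lipschitz, and send $v^i_n$ to $v^i$, exhibiting the desired Lipschitz approximation; since the procedure forces the lengths of the simplicial parts of $Y_n$ to tend to zero, the quotient volumes converge to $0$, completing the third bullet.
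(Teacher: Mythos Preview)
Your overall strategy matches the paper's: decompose the band complex (via Imanishi's theorem, as the paper phrases it) to obtain a transverse covering, classify the dense-orbit pieces as axial, surface, or exotic, rule out the axial case, and invoke an approximation result for the exotic pieces. There are, however, two genuine gaps.

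First, your elimination of the axial case is wrong as written. You assert that an axial component ``would yield a dense action on a line with non-trivial arc stabilizer,'' but this is false in general: $\mathbb{Z}^2$ acting on $\mathbb{R}$ by two rationally independent translations is axial with trivial arc stabilizers. The paper's argument runs differently: since the dense-orbit vertex tree $Y$ has trivial arc stabilizers (Proposition~\ref{dense-arcs}), Lemma~\ref{lemma-genind} shows the associated system of isometries has independent generators, and a theorem of Gaboriau \cite[Proposition~3.4]{Gab97} states that independent generators exclude axial components. One could try to salvage your route by arguing that in the free-product setting any non-peripheral abelian subgroup of $G$ is cyclic, forcing the kernel of $G_Y\to\mathrm{Isom}(\mathbb{R})$ to be nontrivial; but that requires extra care (reflections, etc.) and is not what you wrote.

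Second, your treatment of the exotic case cites the wrong source and is too vague to be a proof. Section~II of \cite{GL95} concerns the general strong-approximation construction $T_K$, not an iterative unfolding of thin bands producing Grushko trees. The relevant input is Guirardel's pruning-and-narrowing argument \cite[Proposition~7.2]{Gui98}, which is exactly what the paper invokes. That proposition is what guarantees that the approximating trees $Y_n$ are Grushko $(G_Y,\mathcal{F}_{|G_Y})$-trees (not merely simplicial), that the morphisms $f_n$ and the volume control exist, and that the attaching points lift to points $v^i_n$ fixed by the prescribed $H^i$. Your attempted shortcut---asserting that the $H^i$ cannot be cyclic non-peripheral because they would then be hyperbolic in $Y_n$---puts the cart before the horse: the adjacent edge groups in the dynamical decomposition can certainly be cyclic and non-peripheral in $(G,\mathcal{F})$ (they arise from simplicial arcs of $T$ with nontrivial stabilizer), and the fact that the approximation can nonetheless be arranged with the stated properties is precisely the content of Guirardel's result, not something you can read off from the graph-of-actions structure alone.
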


We call $\mathcal{G}$ the \emph{dynamical decomposition} of $T$, it is determined by $T$. Vertex actions of the third type are called \emph{exotic}. In case no vertex action is exotic, we say that $T$ is \emph{of surface type}.

The proof of Proposition \ref{goa-geom} goes as follows. Let $T$ be a geometric $(G,\mathcal{F})$-tree, and let $K\subseteq T$ be a finite subtree such that $T=T_{K}$. Let $A:=\Sigma/G$, where $\Sigma$ is the band complex defined in Section \ref{sec-pg-tree}. Let $A^{\ast}$ be the complement of the singular set in $F\subseteq A$ (where $F$ is the compact forest defined in Section \ref{sec-systems}), endowed with the restriction of the foliation of $\Sigma$. Let $C^{\ast}\subseteq A^{\ast}$ be the union of the leaves of $A^{\ast}$ which are closed but not compact. The \emph{cut locus} of $A$ is defined as $C:=C^{\ast}\cup \text{Sing}$. The set $A\smallsetminus C$ is a union of finitely many open sets $U_1,\dots,U_p$, which are unions of leaves of $A^{\ast}$. By a classical result of Imanishi \cite{Ima79}, see also \cite[Section 3]{GLP94}, for all $i\in\{1,\dots,p\}$, either every leaf of $U_i$ is compact, or else every leaf of $U_i$ is dense in $U_i$. Notice that each component $U_i$ is again dual to a finite system of isometries.

As noticed in \cite[Propositions 1.25 and 1.31]{Gui08}, Imanishi's theorem provides a transverse covering of $T$ in the following way. Let $\widetilde{C}$ be the lift of the cut locus to $\Sigma$. Given a component $U$ of $\Sigma\smallsetminus\widetilde{C}$, we let $T_U$ be the tree dual to the foliated $2$-complex $\overline{U}$, i.e. the leaf space made Hausdorff of $\overline{U}$. Then the family $\{T_U\}_U$ is a transverse covering of $T$. Each $T_U$ is either an arc (in the case where every leaf of the image of $U$ in $A$ is compact) or has dense orbits (in the case where all leaves of the image of $U$ in $A$ are dense). Associated to this transverse covering of $T$ is a graph of actions, whose vertex groups are finitely generated, and whose vertex actions are dual to foliated $2$-complexes. In addition, arc stabilizers in the vertex actions with dense orbits are trivial (Proposition~\ref{dense-arcs}). Therefore, we can apply \cite[Proposition A.6]{Gui08} to each of the vertex actions with dense orbits. This provides a classification of vertex actions with dense orbits into three types (\emph{axial}, \emph{surface} and \emph{exotic}). As our system of partial isometries on $F$ has independent generators (Lemma \ref{lemma-genind}), all vertex actions with dense orbits of the decomposition are dual to finite systems of isometries with independent generators. This excludes the axial case, see \cite[Proposition 3.4]{Gab97}. The existence of the Lipschitz approximation with the required properties in the exotic case was proved by Guirardel in \cite[Proposition 7.2]{Gui98}, using a pruning and narrowing argument.
\\
\\
\indent We finish this section by mentioning a consequence of Proposition \ref{goa-geom}, that will turn out to be useful in \cite{Hor14-8}.

\begin{lemma}\label{surface-type}
Let $T$ be a small, minimal $(G,\mathcal{F})$-tree. If there exists a subgroup $H\subseteq G$ that is elliptic in $T$, and not contained in any proper $(G,\mathcal{F})$-free factor, then $T$ is geometric of surface type.
\end{lemma}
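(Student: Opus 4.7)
The plan is to combine the geometric approximations of Theorem~\ref{approx-by-geom-2} with the dynamical decomposition of Proposition~\ref{goa-geom}, showing that any failure of the conclusion produces a proper $(G,\mathcal{F})$-free factor containing $H$.

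First, observe that $H$ must be nonperipheral: otherwise $H \subseteq G_i$ for some $i$, and since $\operatorname{rk}_K(G,\mathcal{F}) \ge 2$ (implicit since $G$ admits a nontrivial tree on which some subgroup is elliptic without being in a free factor), $G_i$ is a proper $(G,\mathcal{F})$-free factor, contradicting the hypothesis. After reducing to the case where $H$ is finitely generated (any finitely generated subgroup of $H$ witnessing non-containment in a proper free factor suffices), I would invoke Theorem~\ref{approx-by-geom-2} to strongly approximate $T$ by minimal small geometric $(G,\mathcal{F})$-trees $T_n$ with $H$ elliptic in each $T_n$.

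Next, I would analyze the dynamical decomposition of each $T_n$ furnished by Proposition~\ref{goa-geom}. The first claim is that every edge in this decomposition has nontrivial (cyclic) stabilizer. Indeed, if some edge had trivial stabilizer, then collapsing all other edges would give a nontrivial $(G,\mathcal{F})$-free splitting in which $H$ is elliptic; $H$ would then sit inside a vertex group, which is a proper $(G,\mathcal{F})$-free factor, a contradiction.

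The crucial step is to rule out exotic vertex actions. Suppose the decomposition of some $T_n$ contains an exotic vertex action $Y$ with vertex group $G_Y$. By Proposition~\ref{goa-geom}, $Y$ admits a Lipschitz approximation by pointed Grushko $(G_Y,\mathcal{F}_{|G_Y})$-trees $(Y_m,(v_m^1,\dots,v_m^k))$, the attaching points being fixed by the adjacent edge stabilizers. Substituting $Y$ by $Y_m$ in the graph of actions defining $T_n$ and applying Lemma~\ref{Guirardel-reduction} produces trees $T_{n,m}$ converging to $T_n$. Since $Y_m$ is a Grushko tree, it contributes simplicial edges of trivial stabilizer to the refined graph of actions for $T_{n,m}$; collapsing every edge of nontrivial stabilizer yields a \emph{nontrivial} $(G,\mathcal{F})$-free splitting $S_m$. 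The vertex groups of $S_m$ arising from within $Y_m$ are conjugates of subgroups of $\mathcal{F}_{|G_Y}$, which are peripheral in $G$ and therefore (since $H$ is nonperipheral) do not contain $H$.

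It remains to place $H$ inside $S_m$. If the $H$-fixed point of $T_n$ lies outside $Y$, then $H$ is visibly elliptic in $T_{n,m}$ and hence in $S_m$, giving the contradiction. If it lies inside $Y$, then $H \subseteq G_Y$, and one must argue that $H$ nevertheless fixes a point of $S_m$: the attaching points $v_m^i$ are fixed by the adjacent edge stabilizers, and $H$ being nonperipheral but elliptic in $Y$ forces $H$ (through the Lipschitz morphism $Y_m \to Y$ and the structure of $H$'s action on the preimage of its $Y$-fixed point) to commute appropriately with one of these edge stabilizers so that it still lies in a vertex group of $S_m$ coming from an adjacent piece of the original decomposition. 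This last point is the main obstacle of the proof; it requires carefully using smallness of $T_n$ (arc stabilizers cyclic, non-peripheral) together with the fact that fixed-point sets of individual elements embed isometrically under resolutions (Corollary~\ref{stab-isom}).

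Finally, to upgrade the conclusion from each $T_n$ to $T$, I would exploit strong convergence, the uniform bound on orbits of branch points (Corollary~\ref{bound-branch}) and the control of fixed-point sets under morphisms (Corollary~\ref{stab-isom}) to show that the combinatorial type of the dynamical decomposition of $T_n$ stabilizes as $n \to \infty$, so that $T$ itself inherits a dynamical decomposition with no exotic pieces, hence is geometric of surface type.
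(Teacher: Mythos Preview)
Your argument has a genuine structural gap at step~6. You show (modulo the difficulty you flag in step~5) that each geometric approximation $T_n$ is of surface type, and then attempt to transfer this to $T$ by claiming the combinatorial type stabilizes. But the $T_n$ were \emph{constructed} to be geometric; the fact that they are geometric of surface type carries no information about whether $T$ itself is geometric. Geometricity is not closed under strong limits (every small tree is a strong limit of geometric ones by Theorem~\ref{approx-by-geom-2}), and there is no mechanism here by which ``the dynamical decomposition of $T_n$ stabilizes'' would force $T$ to be geometric. So the passage to the limit simply does not go through.

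The paper's proof avoids this by a case split on whether $T$ is geometric. If $T$ is geometric, one works with the dynamical decomposition of $T$ itself (so no limit step is needed). If $T$ is \emph{not} geometric, one does approximate by geometric $T_n$ with $H$ elliptic, but then uses directly that the resolution morphism $j_n\colon T_n\to T$ fails to be an isometry. By Corollary~\ref{stab-isom}, $j_n$ restricted to the fixed set of any nontrivial $g$ is an isometry; hence wherever $j_n$ folds two germs together, at least one of them has trivial stabilizer, and $T_n$ contains an edge with trivial stabilizer. Ellipticity of $H$ in $T_n$ then places $H$ in a proper $(G,\mathcal{F})$-free factor, a contradiction. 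This bypasses entirely the analysis of exotic components in $T_n$ and the delicate placement of $H$ you wrestle with in step~5. A smaller point: your reduction to finitely generated $H$ (``any finitely generated subgroup witnessing non-containment'') is not justified; the paper instead replaces $H$ by the full point stabilizer containing it, which has finite Kurosh rank.
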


\begin{proof}
If $T$ is geometric, this follows from Proposition \ref{goa-geom}, so we assume that $T$ is nongeometric. Up to replacing $H$ by the point stabilizer of $T$ in which it is contained, we can assume that $\text{rk}_K(H)<+\infty$. Theorem \ref{approx-by-geom-2} lets us approximate $T$ by a sequence $(T_n)_{n\in\mathbb{N}}$ of small, minimal geometric $(G,\mathcal{F})$-trees, in which $H$ is elliptic. The trees $T_n$ come with morphisms onto $T$. As $T$ is nongeometric, Corollary \ref{stab-isom} ensures that the trees $T_n$ contain an edge with trivial stabilizer. This implies that $H$ is contained in a proper $(G,\mathcal{F})$-free factor, a contradiction. 
\end{proof}

\subsection{Trees of surface type}

Let $T$ be a very small geometric $(G,\mathcal{F})$-tree of surface type (where we recall the definition from the paragraph below Proposition \ref{goa-geom}). Let $\mathcal{G}$ be the dynamical decomposition of $T$, and let $S$ be the skeleton of the corresponding transverse covering. It follows from Proposition \ref{goa-geom} that there are three types of vertices in $S$, namely: vertices of \emph{surface type}, of \emph{arc type}, and vertices associated to nontrivial intersections between the trees of the transverse covering, which we call vertices of \emph{trivial type}. All edge stabilizers in $S$ are cyclic (possibly finite or peripheral). Indeed, stabilizers of edges adjacent to vertices of surface type are either trivial, or they are cyclic, and represent boundary curves or conical points of the associated orbifold. Both edges adjacent to vertices of arc type have the same stabilizer, equal to the stabilizer of the corresponding arc in $T$, which is cyclic because $T$ is very small.

\begin{de}\label{de-unused}
Let $T$ be a very small geometric $(G,\mathcal{F})$-tree of surface type, and let $\sigma$ be a compact $2$-orbifold arising in the dynamical decomposition $\mathcal{G}$ of $T$. Let $g\in G$ be an element represented by a boundary curve of $\sigma$. We say that $g$ is \emph{used} in $T$ if either $g$ is peripheral, or $g$ is conjugate into some adjacent edge group of $\mathcal{G}$. Otherwise $g$ is \emph{unused} in $T$.
\end{de}

\begin{prop}\label{outer-limits}
Let $T$ be a minimal, very small, geometric $(G,\mathcal{F})$-tree of surface type. Then either there exists an unused element in $T$, or $T$ splits as a $(G,\mathcal{F})$-graph of actions over a one-edge $(G,\mathcal{F})$-free splitting.
\end{prop}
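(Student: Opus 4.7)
The plan is to apply Proposition~\ref{goa-geom} to obtain the dynamical decomposition $\mathcal{G}$ of $T$, together with the skeleton $S$ of the associated transverse covering. Because $T$ is very small, each edge stabilizer in $S$ is a subgroup of an arc stabilizer of $T$, and hence is either trivial or cyclic and non-peripheral. The argument will split into two cases according to whether some edge of $S$ has trivial stabilizer.

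First, if some edge $e\subseteq S$ has trivial stabilizer, I would $G$-equivariantly collapse every orbit of edges of $S$ outside the $G$-orbit of $e$. The resulting simplicial $G$-tree $S_0$ has a single orbit of edges with trivial stabilizer; since the peripheral subgroups $G_1,\dots,G_k$ are elliptic in $T$, they remain elliptic in $S_0$, which is therefore a one-edge $(G,\mathcal{F})$-free splitting. By construction, the transverse covering of $T$ refines the action on $S_0$, exhibiting $T$ as a $(G,\mathcal{F})$-graph of actions over $S_0$.

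Otherwise every edge of $S$ has nontrivial cyclic non-peripheral stabilizer, and I aim to exhibit an unused boundary curve. The key observation is that a boundary curve $c$ of a surface piece $\Sigma$ lies in the stabilizer of an adjacent edge of $\mathcal{G}$ precisely when its attaching point $p_c\in T_\Sigma$ lies in some other subtree of the transverse covering, i.e. is an intersection point of $S$. Moreover, if $c\in G_j$ is peripheral, the non-peripheral character of arc stabilizers in $T$ (together with triviality of tripod stabilizers) forces $p_c$ to coincide with the unique fixed point of $G_j$; and whenever $G_j\not\subseteq G_\Sigma$, some $G$-translate of $T_\Sigma$ must meet $T_\Sigma$ at $p_c$, so this point is itself an intersection point of the covering.

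Assuming for contradiction that every boundary curve of every surface piece is used, every attaching point of every surface piece is then either an intersection point of the transverse covering, or else the fixed point of a peripheral subgroup contained in $G_\Sigma$. The main obstacle is to derive from this configuration a contradiction with the hypothesis $N+k\geq 2$ that $G$ is a nontrivial free product. I plan to handle it via a relative Grushko-type accessibility argument: under these constraints, any $(G,\mathcal{F})$-free splitting of $G$ would have to factor through a relative free-product decomposition of some vertex group of $\mathcal{G}$, which is ruled out for surface-orbifold vertices (since all their boundary curves are captured by the incident edges), ruled out for arc vertices (cyclic), and for intersection-point vertices can be used to produce a refinement of the transverse covering with an edge of trivial stabilizer, reducing to the first case.
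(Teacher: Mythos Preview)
Your first case (an edge of $S$ with trivial stabilizer) matches the paper's argument. The difficulty is entirely in the second case, and there your proposal has a genuine gap.

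First, a minor point: your claim that every edge stabilizer of $S$ is contained in an arc stabilizer of $T$ is not correct. For a surface-type vertex $Y$, the stabilizer of an edge of $S$ adjacent to $Y$ is the $G_Y$-stabilizer of an attaching point, i.e.\ a boundary or cone-point subgroup of the orbifold; such an element need not fix any arc of $T$, and it may well be peripheral. The paper explicitly allows edge stabilizers of $S$ to be finite or peripheral. This does not wreck the overall strategy, but your justification is wrong.

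The real gap is the ``relative Grushko-type accessibility argument''. What you need is the statement that, in a minimal simplicial $(G,\mathcal{F})$-tree all of whose edge groups are nontrivial cyclic, some vertex group $G_v$ admits a free-product decomposition relative to incident edge groups and to $\mathcal{F}_{|G_v}$. This is exactly the content of Lemma~\ref{unfold} in the paper, and it is not a formality: the paper proves it by a fairly delicate Stallings-fold argument, choosing an order on folds (type~$1$ before type~$2$ before type~$3$), arranging maximality of the folds, and analysing the last tree in the sequence that still has an edge with trivial stabilizer. Your proposal asserts this step (``any $(G,\mathcal{F})$-free splitting of $G$ would have to factor through a relative free-product decomposition of some vertex group of~$\mathcal{G}$'') without proof. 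Once Lemma~\ref{unfold} is available, the rest of your outline is correct and coincides with the paper: the vertex $v$ cannot be of arc type; if it is of surface type one reads off an unused boundary curve; if it is of trivial type one refines $S$ by a one-edge free splitting and falls back to the first case. But as written, the heart of the argument is missing.
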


Our proof of Proposition \ref{outer-limits} is based on the following lemma. Given a group $G$, and a family $Y$ of subgroups of $G$, we say that $G$ \emph{splits as a free product relatively to $Y$} if there exists a splitting of the form $G=A\ast B$, such that every subgroup in $Y$ is conjugate into either $A$ or $B$.

\begin{lemma} \label{unfold}
Let $G$ be a countable group, and let $\mathcal{F}$ be a free factor system of $G$. Let $T$ be a minimal, simplicial $(G,\mathcal{F})$-tree, whose edge stabilizers are all cyclic and nontrivial (they may be finite or peripheral). Then there exists a vertex $v$ in $\mathcal{G}$, such that $G_v$ splits as a free product relative to incident edge groups and subgroups in $\mathcal{F}_{|G_v}$. 
\end{lemma}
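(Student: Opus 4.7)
The plan is to argue by contradiction, assuming that no vertex group $G_v$ of $\mathcal{G}$ splits as a nontrivial free product relative to $\mathcal{F}_{|G_v}$ and its incident edge groups. First, I verify that the hypotheses on $T$ force $\text{rk}_K(G,\mathcal{F}) \geq 2$: otherwise $(G,\mathcal{F}) = (G_1, \{[G_1]\})$, in which case every $(G,\mathcal{F})$-tree is trivial (since $G_1$ is elliptic), or $(G,\mathcal{F}) = (\mathbb{Z}, \emptyset)$, in which case every minimal simplicial $\mathbb{Z}$-tree is the real line with trivial edge stabilizers — both contradict the hypotheses on $T$. Hence I fix a nontrivial Grushko $(G,\mathcal{F})$-free splitting $R$. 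Since every vertex stabilizer of $R$ is peripheral (and so elliptic in $T$), standard Bass--Serre theory supplies a $G$-equivariant simplicial morphism $\phi \colon R \to T$.

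I then consider the favorable case in which every edge stabilizer $\langle c_e \rangle$ of $T$ is elliptic in $R$. If some vertex group $G_v$ of $T$ fails to be elliptic in $R$, its minimal subtree $R_v^{\min} \subseteq R$ is a nontrivial tree with trivial edge stabilizers in which every subgroup of $\mathcal{F}_{|G_v}$ and every incident edge group is elliptic, yielding a nontrivial free splitting of $G_v$ relative to $\mathcal{F}_{|G_v} \cup \{\text{incident edge groups}\}$ — contradicting the no-splitting assumption. Otherwise every $G_v$ is elliptic in $R$, so I can define a $G$-equivariant map $\psi \colon T \to R$ sending each vertex $v$ of $T$ to a fixed point of $G_v$ in $R$. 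For every edge $e = [v_1, v_2]$ of $T$, the nontrivial cyclic group $\langle c_e \rangle$ fixes both $\psi(v_1)$ and $\psi(v_2)$, hence the arc between them in $R$; since nondegenerate arcs of $R$ have trivial stabilizers, this forces $\psi(v_1) = \psi(v_2)$, and then by connectedness $\psi$ is constant. Thus $G$ fixes a point of $R$, contradicting the minimality and nontriviality of $R$.

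The main obstacle is the remaining case, where some edge stabilizer $\langle c_e \rangle$ of $T$ is hyperbolic in $R$ (which forces $c_e$ to be non-peripheral). The plan is to reduce to the favorable case by modifying $R$: for each edge $e$ of $T$ with $\langle c_e \rangle$ hyperbolic in $R$, equivariantly collapse the $\langle c_e \rangle$-axis and all its $G$-translates in $R$, producing a new $G$-tree $R'$ with trivial edge stabilizers in which every $\langle c_e \rangle$ becomes elliptic. A careful analysis — using the minimality of $R$, the peripheral structure of Grushko splittings, and the finiteness of the collection of $G$-orbits of edges of $T$ — is needed to guarantee that $R'$ remains a nontrivial $(G,\mathcal{F})$-free splitting (i.e. that the collapsed axes do not cover all of $R$). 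Once this is established, the argument of the previous paragraph applied to $R'$ yields the desired contradiction.
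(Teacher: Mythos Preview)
Your favorable-case argument is correct and clean, but the reduction to it is where the entire difficulty lies, and you have not actually carried it out.

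Observe first that in a Grushko $(G,\mathcal{F})$-tree $R$, an element is elliptic if and only if it is peripheral. So whenever $T$ has a non-peripheral edge stabilizer---the generic and interesting case---you are \emph{never} in the favorable case with $R$ itself, and the whole content of the lemma is in producing your tree $R'$. You acknowledge that ``a careful analysis is needed to guarantee that $R'$ remains a nontrivial $(G,\mathcal{F})$-free splitting'', but you do not provide one. In fact this is not a technicality: for a fixed Grushko tree the $G$-translates of the axes of the edge groups can cover all of $R$ (for instance, in $F_2=\langle a,b\rangle$ with the standard Cayley tree, the translates of the axis of $ab$ pass through every edge), so the collapse can be trivial. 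One would therefore need to choose $R$ carefully, or iterate the collapsing procedure, and control the outcome.

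More fundamentally, the existence of a nontrivial $(G,\mathcal{F})$-free splitting $R'$ in which every edge group of $T$ is elliptic is \emph{equivalent} to the lemma you are trying to prove. One direction is your favorable-case argument. For the other, if some $G_v$ splits freely relative to its incident edge groups and $\mathcal{F}_{|G_v}$, then blowing up $v$ accordingly and collapsing all the original edges of $T$ yields exactly such an $R'$. So your proposal reduces the lemma to an equivalent statement and then stops.

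The paper's proof takes a different route entirely: it factors a simplicial map from a Grushko tree $T_0$ to $T$ as a sequence of Stallings folds, orders the folds carefully (type~1 before type~2 before type~3, and with a maximality condition on powers), and then analyzes the \emph{last} tree $T_k$ in the sequence that still contains an edge with trivial stabilizer. The fold $f_k$ out of $T_k$ identifies an edge with trivial stabilizer with a translate, and the vertex group created by this fold visibly splits as a free product relative to the incident edge groups and peripherals. A further argument shows $T_{k+1}=T$, so this vertex is a vertex of $T$. The folding approach is what actually produces the free splitting; your collapsing idea, as stated, presupposes it.
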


Let $T,T'$ be two simplicial $(G,\mathcal{F})$-trees. A map $f:T\to T'$ is a \emph{$G$-equivariant edge fold} (or simply a \emph{fold}) if there exist two edges $e_1,e_2\subseteq T$, incident to a common vertex in $T$, such that $T'$ is obtained from $T$ by $G$-equivariantly identifying $e_1$ and $e_2$, and $f:T\to T'$ is the quotient map. A fold $f:T\to T'$ is determined by the orbit of the pair of edges $(e_1,e_2)$ identified by $f$. We say that $f$ is 

\begin{itemize}
\item of \emph{type $1$} if $e_1$ and $e_2$ belong to distinct $G$-orbits of oriented edges in $T$, and both $e_1$ and $e_2$ have nontrivial stabilizer, and
\item of \emph{type $2$} if $e_1$ and $e_2$ belong to distinct $G$-orbits of oriented edges in $T$, and either $e_1$ or $e_2$ (or both) has trivial stabilizer in $T$, and
\item of \emph{type $3$} if $e_1$ and $e_2$ belong to the same $G$-orbit of oriented edges in $T$.
\end{itemize}

Assume that $f:T\to T'$ is a fold. We note that if $H\subseteq G$ is a subgroup of $G$ that fixes an edge $e_1\subseteq T'$, and $\widetilde{e_1}$ is an edge in the $f$-preimage of $e_1$ in $T$, then $H$ fixes an extremity of $\widetilde{e_1}$. We start by making the following observation.

\begin{lemma}\label{obs}
Let $T$ and $T'$ be two simplicial $(G,\mathcal{F})$-trees with cyclic edge stabilizers. Assume that $T'$ is obtained from $T$ by performing a fold $f$ of type $2$ or $3$. If $e'_1$ and $e'_2$ are two edges of $T$ that are identified by $f$, then either $e'_1$ or $e'_2$ (or both) has trivial stabilizer.
\qed
\end{lemma}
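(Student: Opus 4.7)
The plan is to argue by case analysis using the explicit description of the equivalence relation $\sim$ on edges of $T$ whose quotient is $f$: by construction, $\sim$ is the smallest $G$-invariant equivalence relation containing the pair $(e_1, e_2)$, so two edges are identified by $f$ iff they are linked by a finite chain $e'_1 = f_0, f_1, \ldots, f_k = e'_2$ in which each consecutive pair has the form $(g e_1, g e_2)$ or $(g e_2, g e_1)$ for some $g \in G$. In particular, consecutive edges in any such chain alternate between the two $G$-orbits $G\cdot e_1$ and $G\cdot e_2$.

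For a type 2 fold, I would assume without loss of generality that $\text{Stab}(e_2) = \{1\}$, so that every translate of $e_2$ has trivial stabilizer. If $e'_1 \neq e'_2$ are identified and both have nontrivial stabilizer, neither can lie in $G\cdot e_2$, so both lie in $G\cdot e_1$ and the chain length $k$ is forced to be even. Writing $f_0 = g_0 e_1$, the first step gives $f_1 = g_0 e_2$; then $f_2 = g_1 e_1$ with $g_1 e_2 = g_0 e_2$, so $g_0^{-1} g_1 \in \text{Stab}(e_2) = \{1\}$ and $f_2 = f_0$. Iterating, $f_{2j} = f_0$ for every $j$, contradicting $e'_1 \neq e'_2$; hence at least one identified edge must lie in $G\cdot e_2$ and therefore have trivial stabilizer.

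For a type 3 fold, $e_2 = h e_1$ lies in the same $G$-orbit as $e_1$, so the entire $f$-fiber of $f(e_1)$ is contained in $G\cdot e_1$ and its members have $G$-conjugate stabilizers. Either all these stabilizers are trivial, which yields the conclusion immediately, or all are nontrivial; the latter alternative is to be ruled out using the hypothesis that $T'$ has cyclic edge stabilizers, by comparing $\text{Stab}(e_1)$ with the cyclic stabilizer of $f(e_1)$ in $T'$, which strictly contains $\text{Stab}(e_1)$ because the pair $(e_1, e_2)$ is already identified. The subtle point of the proof is the alternation-and-collapse step in the type 2 case, where the triviality of $\text{Stab}(e_2)$ collapses the chain; the type 3 case reduces to bookkeeping once one observes that the fiber is a single $G$-orbit.
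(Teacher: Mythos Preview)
Your argument for type~2 folds is correct: the triviality of $\mathrm{Stab}(e_2)$ forces any chain of generating identifications that starts and ends in $G\cdot e_1$ to collapse, exactly as you describe. The paper records the lemma without proof, so there is nothing to compare against; your type~2 analysis supplies what was left implicit.

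The type~3 argument, however, does not close. You claim that the strict inclusion $\mathrm{Stab}(e_1)\subsetneq\mathrm{Stab}(f(e_1))$ contradicts cyclicity of $\mathrm{Stab}(f(e_1))$, but an infinite cyclic group has proper infinite cyclic subgroups. Concretely, take $G=F_2=\langle a,b\rangle$ with $\mathcal{F}=\emptyset$, and let $T$ be the Bass--Serre tree of the amalgam $\langle a\rangle \ast_{\langle a^2\rangle}\langle a^2,b\rangle\cong F_2$. The vertex with stabilizer $\langle a\rangle$ is incident to exactly two edges $e_1$ and $ae_1$, both with stabilizer $\langle a^2\rangle$. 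Folding them is a type~3 fold; the image edge has stabilizer $\langle a\rangle$, and both $T$ and the folded tree $T'$ are simplicial $(G,\mathcal{F})$-trees with cyclic edge stabilizers. Yet $e_1$ and $ae_1$ are identified and both have nontrivial stabilizer, so the conclusion fails for general type~3 folds as defined in the paper.

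In the only place the lemma is invoked (the proof of Lemma~\ref{unfold}), the fold $f_k$ is performed on a tree that still contains an edge with trivial stabilizer while the image tree contains none; this forces the defining edge of $f_k$, if the fold is of type~3, to already have trivial stabilizer, whence every edge in its fiber (all lying in a single $G$-orbit) does too. Adding that hypothesis makes the type~3 case immediate.
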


\begin{proof}[Proof of Lemma \ref{unfold}]
Let $T_0$ be any Grushko $(G,\mathcal{F})$-tree. All point stabilizers in $T_0$ are elliptic in $T$, so up to possibly collapsing some edges in $T_0$, and subdividing edges of $T_0$, there exists a simplicial map $f:T_0\to T$ (i.e. sending vertices to vertices and edges to edges). By \cite[3.3]{Sta83}, the map $f$ can be decomposed as a sequence of $G$-equivariant edge folds $f_i:T_i\to T_{i+1}$.  

We can assume that along the folding sequence, we always perform edge folds of type $1$ before performing edge folds of type $2$ and $3$, and we always perform edge folds of type $2$ before edge folds of type $3$, for as long as possible. This is possible because the number of orbits of edges decreases when performing a fold of type $1$ or $2$. 

We claim that we can also assume that folds are maximal in the following sense: if $g$ fixes an edge $e$ in $T$, then we never identify a preimage $e'$ of $e$ with a translate of the form $g^ke'$ without identifying it with $ge'$. Assume otherwise, and let $i$ be the first time at which a non-maximal fold occurs. Let $e'$ be a preimage of $e$ in $T_i$, having a vertex $y$ stabilized by $g^k$, so that we fold $e'$ and $g^ke'$ when passing from $T_i$ to $T_{i+1}$. By our choice of $i$ and the fact that edge stabilizers in $T$ are cyclic, the edge $e'$ has trivial stabilizer in $T_i$ (in particular, the fold performed from $T_i$ to $T_{i+1}$ is not of type $1$). The element $g$ is also elliptic in $T_i$. We let $x$ be the point closest to $y$ that is fixed by $g$ in $T_i$. If $x=y$, then we could choose to identify $e'$ and $ge'$ when passing from $T_i$ to $y$. Otherwise, all edges in the segment $[x,y]\subseteq T_i$ are stabilized by $g^k$. Our choice of $i$ implies that the stabilizer of their images in $T$ is equal to $g^k$ (and not to any proper root of $g^k$). Since the image of $e'$ in $T$ is stabilized by $g$, this implies that we can find two consecutive edges on the segment $[x,y]$ that are identified in $T$. This shows that one could perform a fold of type $1$ on the tree $T_i$, contradicting our choice of folding path. 

Let $T_k$ be the last tree along the folding sequence that contains an edge with trivial stabilizer. The fold $f_k$ is either of type $2$ or of type $3$. It identifies an edge $e_k$ of $T_k$ with trivial stabilizer with some translate $ge_k$ with $g\in G$ (although the pair $(e_k,ge_k)$ might not be the defining pair of the edge fold). We can assume $\langle g\rangle$ to be maximal in the following sense: if $g$ is of the form $h^l$ with $h\in G$ and $l>1$, then $e_k$ is not identified with $he_k$. We claim that $T_{k+1}=T$. 

We postpone the proof of the claim to the next paragraph, and first explain how to derive the lemma from this claim. Let $v_k$ be the vertex of $e_k$ such that $gv_k=v_k$, and $v'_k$ be the other vertex of $e_k$. Notice that $G_{v'_k}$ is nontrivial: otherwise, all edges in $T_k$ adjacent to $v'_k$ would have trivial stabilizer, and there would be at least three distinct $G$-orbits of such edges. Since the fold $f_k$ involves at most two orbits of edges, there would be an edge with trivial stabilizer in $T_{k+1}$, contradicting the definition of $T_{k+1}$. If $f_k$ is a fold of type $3$, defined by the pair $(e_k,ge_k)$, then the vertex $f_k(v'_k)$ satisfies the conclusion of the lemma. Indeed, we have $G_{f_k(v'_k)}=G_{v'_k}\ast\langle g\rangle$. If $f_k$ is a fold of type $2$, then $f_k$ identifies $e_k$ with an edge $e'_k$ with nontrivial stabilizer, because otherwise $T_{k+1}$ would contain an edge with trivial stabilizer. Denote by $v''_k$ the vertex of $e'_k$ distinct from $v_k$. If $v'_k$ and $v''_k$ do not belong to the same $G$-orbit, then we have a nontrivial splitting $G_{f_k(v'_k)}=G_{v'_k}\ast G_{v''_k}$.   If $v'_k$ and $v''_k$ belong to the same $G$-orbit, then we have a nontrivial splitting $G_{f_k(v'_k)}=G_{v''_k}\ast\mathbb{Z}$. In both cases, this splitting is relative to incident edge groups and to $\mathcal{F}_{|G_v}$, because all trees along the folding sequence are $(G,\mathcal{F})$-trees.

We now prove the above claim that $T_{k+1}=T$. Assume towards a contradiction that $T_{k+1}\neq T$. It follows from our choice of folding path, and the fact that $T$ has cyclic edge stabilizers, that all possible folds in $T_{k+1}$ identify two edges $e_1$ and $e_2$ at $f_{k}(v'_k)$ in distinct $G$-orbits, and $e_1$ and $e_2$ have the same nontrivial stabilizer $H$ in $T_{k+1}$. Let $\widetilde{e_1}$ (resp. $\widetilde{e_2}$) be an edge in $T_k$ in the $f_k$-preimage of $e_1$ (resp. $e_2$). The group $H$ fixes an extremity of both $\widetilde{e_1}$ and $\widetilde{e_2}$. If $\widetilde{e_1}$ and $\widetilde{e_2}$ were disjoint, then $H$ would fix the segment between them, a contradiction (Lemma \ref{obs}). Therefore, the edges $\widetilde{e_1}$ and $\widetilde{e_2}$ are adjacent in $T_k$. By our choice of folding path, at least one of them, say $\widetilde{e_1}$, has trivial stabilizer (otherwise we could perform a fold of type $1$ in $T_k$ identifying $\widetilde{e_1}$ and $\widetilde{e_2}$), and hence belongs to the $G$-orbit of $e_k$. Since it is possible to fold $\widetilde{e_1}$ and $\widetilde{e_2}$ in $T_k$, the fold $f_k$ is of type $2$. Hence $f_k$ identifies $\widetilde{e_1}$ with an edge $e_3$ whose stabilizer is equal to $H$. Then $e_3$ is adjacent to $e_2$, and is identified with $e_2$ in $T$, so we could have performed a fold of type $1$ in $T_k$, a contradiction. 
\end{proof}

\begin{proof}[Proof of Proposition \ref{outer-limits}]
Let $S$ be the skeleton of the dynamical decomposition of $T$. If $S$ is reduced to a point, then $T$ is dual to a minimal measured foliation on a compact $2$-orbifold $\sigma$. Some boundary component $c$ of $\sigma$ represents a nonperipheral conjugacy class. Indeed, all boundary components of a compact $2$-orbifold cannot be elliptic in a common free splitting of the fundamental group of the orbifold. Then $c$ is an unused element in $T$.

Now assume that $S$ is a nontrivial minimal $(G,\mathcal{F})$-tree. If $S$ contains an edge with trivial stabilizer, then this edge defines a $(G,\mathcal{F})$-free splitting, and $T$ splits as a graph of actions over this splitting by \cite[Lemma 4.7]{Gui04}. Otherwise, let $v$ be a vertex of $S$ provided by Lemma \ref{unfold}.

We note that the vertex $v$ cannot be of arc type, because a vertex of arc type has a stabilizer equal to the stabilizer of the incident edges. If $v$ is of surface type, it is associated to a compact $2$-orbifold $\sigma$, equipped with a minimal measured foliation. The fundamental group of $\sigma$ splits as a free product relatively to incident edge groups and to subgroups in $\mathcal{F}_{|\pi_1(\sigma)}$. This implies the existence of an unused element in $T$, otherwise $\pi_1(\sigma)$ would split as a free product in which all boundary components of $\sigma$ are elliptic. 

If $v$ is of trivial type, then we get a one-edge $(G,\mathcal{F})$-free splitting $S_0$ that is compatible with $S$, by splitting the vertex group $G_v$. Associated to each vertex of $S_0$ with vertex group $G_{v'}$ is a geometric (possibly trivial) $G_{v'}$-action $T_{v'}$. The tree $T$ splits as a graph of actions over $S_0$, with the trees $T_{v'}$ as vertex actions.
\end{proof}

\subsection{Approximating very small geometric $(G,\mathcal{F})$-trees by Grushko $(G,\mathcal{F})$-trees}

\begin{proof}[Proof of Theorems \ref{cv-closure} and \ref{Lip-approx}]
Let $T$ be a very small, minimal $(G,\mathcal{F})$-tree. We want to show that we can approximate $T$ by a sequence of minimal Grushko $(G,\mathcal{F})$-trees, and that the approximation can be chosen to be a Lipschitz approximation, by trees whose quotient volumes converge to the quotient volume of $T$, if $T$ has trivial arc stabilizers. We will argue by induction on $\text{rk}_K(G,\mathcal{F})$. The claim holds true when $\text{rk}_K(G,\mathcal{F})=1$, so we assume that $\text{rk}_K(G,\mathcal{F})\ge 2$. The claim also holds true if $T$ is reduced to a point. Thanks to Theorem \ref{approx-by-geom}, we can assume $T$ to be geometric. (For the statement about quotient volumes, see the argument at the beginning of Section \ref{sec-reduction}). By Proposition~\ref{goa-geom}, the tree $T$ decomposes as a graph of actions whose vertex actions are either arcs, or of surface or exotic type. Proposition \ref{goa-geom} also enables us to approximate all exotic vertex actions. Using Lemmas \ref{Guirardel-reduction} and \ref{reduction-2}, we can therefore reduce to the case where $T$ is a tree of surface type (notice that all edges of the decomposition as a graph of actions whose stabilizer is noncyclic, or nontrivial and peripheral, have length $0$).
 
First assume that there exists an unused element $c$ in $T$, corresponding to a boundary curve in a minimal orbifold $\sigma$ of the dynamical decomposition. One can then narrow the band complex by width $\delta>0$ from $c$ to get a Lipschitz approximation of $T$: this is done by cutting a segment on $\Sigma$ of length $\delta$ (arbitrarily small) transverse to the boundary curve $c$ and to the foliation. In this way, all leaves of the foliations become segments (half-leaves of the original foliation on $\sigma$ are dense), so the tree dual to the foliated complex by which the minimal foliation on $\sigma$ has been replaced is simplicial. By choosing $\delta>0$ arbitrarily close to $0$, we can ensure the volume of this tree to be arbitrarily small. In the new band complex obtained in this way, the orbifold $\sigma$ has therefore been replaced by a simplicial component, with trivial edge stabilizers. 
 
We thus reduce to the case where no element of $G$ is unused in $T$. Lemma \ref{outer-limits} thus ensures that $T$ splits as a $(G,\mathcal{F})$-graph of actions over a one-edge $(G,\mathcal{F})$-free splitting, and we can conclude by induction, using Lemmas \ref{reduction-2} and \ref{reduction}.
\end{proof}

\section{Tame $(G,\mathcal{F})$-trees}\label{sec-good}

We finish this paper by introducing another class of $(G,\mathcal{F})$-trees, larger than the class of very small $(G,\mathcal{F})$-trees, which we call \emph{tame} $(G,\mathcal{F})$-trees. This class will turn out to provide the right setting for carrying out our arguments in \cite{Hor14-6} to describe the Gromov boundary of the graph of cyclic splittings of $(G,\mathcal{F})$.

\begin{de}
A minimal $(G,\mathcal{F})$-tree is \emph{tame} if it is small, and has finitely many orbits of directions at branch points.
\end{de}

There exist small $(G,\mathcal{F})$-actions that are not tame. A typical example is the following: a sequence of splittings of $F_2=\langle a,b\rangle$ of the form $F_2=(\langle a\rangle\ast_{\langle a^2\rangle}\langle a^2\rangle\ast_{\langle a^4\rangle}\dots\ast_{\langle a^{2^n}\rangle}\langle a^{2^n}\rangle)\ast \langle b\rangle$, in which the edge with stabilizer generated by $a^{2^k}$ has length $\frac{1}{2^k}$, converges to a small $F_2$-tree with infinitely many orbits of branch points. 

By the discussion in Section \ref{sec-Levitt}, the class of tame $(G,\mathcal{F})$-trees is the right class of trees in which Levitt's decomposition makes sense. 

\begin{theo} (Levitt \cite[Theorem 1]{Lev94})\label{Levitt-good}
Let $G$ be a countable group, and let $\mathcal{F}$ be a free factor system of $G$. Then every tame $(G,\mathcal{F})$-tree splits uniquely as a graph of actions, all of whose vertex trees have dense orbits for the action of their stabilizer, such that the Bass--Serre tree of the underlying graph of groups is small, and all its edges have positive length.
\end{theo}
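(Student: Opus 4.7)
The plan is to adapt Levitt's original argument [Lev94, Theorem 1] to the tame setting. Levitt's proof rests on two ingredients: finitely many orbits of branch and inversion points, and \emph{finite support} of the action (existence of a finite subtree whose $G$-translates cover every arc). Finitely many orbits of branch points is part of the definition of tameness, finite support of minimal $(G,\mathcal{F})$-trees is Guirardel's Lemma [Gui08, Lemma 1.14], and finitely many orbits of inversion points follows from tameness together with smallness (an inversion point carries only two directions and can be absorbed into a branch point by passing to a subdivision if necessary). With these two inputs, it remains to run Levitt's construction and check that the output is a $(G,\mathcal{F})$-graph of actions with small edge stabilizers and positive edge lengths.

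I would construct the decomposition from a transverse covering. Declare $x\sim y$ in $T$ whenever $x=y$ or branch points are dense in the arc $[x,y]$; transitivity follows from the tree identity $[x,z]\subseteq [x,y]\cup[y,z]$ applied at the center $m$ of the tripod on $x,y,z$, together with the inclusions $[x,m]\subseteq[x,y]$ and $[m,z]\subseteq[y,z]$. Let $\mathcal{Y}$ be the family of closures of the nondegenerate equivalence classes. Distinct $Y,Y'\in\mathcal{Y}$ can intersect in at most one point, for otherwise their two equivalence classes would merge; moreover the stabilizer of each $Y\in\mathcal{Y}$ acts with dense orbits on $Y$ by construction. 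To see that every arc $I\subseteq T$ meets only finitely many elements of $\mathcal{Y}$, cover $I$ by finitely many translates $g_1K,\dots,g_rK$ of a finite support tree $K$, and note that the tameness hypothesis ensures each $g_iK$ contains only finitely many branch points, hence meets only finitely many elements of $\mathcal{Y}$. Applying Proposition \ref{skeleton} then yields a splitting of $T$ as a $(G,\mathcal{F})$-graph of actions whose skeleton $S$ is the desired Bass--Serre tree.

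Edge stabilizers of $S$ stabilize arcs of $T$ and hence are small. After collapsing any edges of length zero (those arising from degenerate overlaps of two elements of $\mathcal{Y}$ meeting in a single point that is itself an orbit of vertices of $S$), one obtains positive edge lengths throughout. Uniqueness follows from the intrinsic characterization of the $Y\in\mathcal{Y}$ as the maximal nondegenerate subtrees on which the action of the stabilizer has dense orbits, and the fact that $S$ is then forced to be the skeleton of the canonical transverse covering. The main obstacle, and the step where tameness is indispensable, is the finite-covering property of $\mathcal{Y}$: the small-but-not-tame $F_2$ example given at the start of Section \ref{sec-good} produces a small tree in which a single arc meets infinitely many nested equivalence classes, so without the tameness hypothesis the family $\mathcal{Y}$ fails to be a locally finite transverse covering and no such decomposition can exist.
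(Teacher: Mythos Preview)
Your high-level strategy matches the paper's: the paper does not give a detailed proof of Theorem~\ref{Levitt-good} either, but simply refers to the discussion in Section~\ref{sec-Levitt}, where the two ingredients you identify (finite support via \cite[Lemma 1.14]{Gui08}, and finitely many orbits of branch and inversion points) are singled out as exactly what is needed to make Levitt's original argument go through. So at the level of ``which inputs feed into Levitt's machine'', you and the paper agree.

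However, your attempt to spell out the construction has two genuine gaps. First, your family $\mathcal{Y}$ consists only of the closures of the dense-branch-point equivalence classes; it omits the simplicial part of $T$ (the maximal arcs whose interior contains no branch point). As a result $\mathcal{Y}$ need not cover $T$, so Proposition~\ref{skeleton} does not apply as stated, and your later remarks about ``collapsing edges of length zero'' to obtain positive edge lengths do not make sense: the positive-length edges in the Bass--Serre tree are supposed to come precisely from those simplicial arcs you have left out. The correct transverse covering consists of the dense-orbit pieces \emph{together with} the closures of the simplicial edges.

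Second, the step ``tameness ensures each $g_iK$ contains only finitely many branch points'' is false. Tameness gives finitely many \emph{orbits} of directions at branch points, but a finite subtree $g_iK$ can perfectly well contain infinitely many branch points of $T$ --- indeed it must, whenever $g_iK$ meets one of your own subtrees $Y\in\mathcal{Y}$ in a nondegenerate arc, since branch points are dense in $Y$. The finiteness you need (that an arc meets only finitely many pieces of the transverse covering) requires a different argument: one shows that there are only finitely many $G$-orbits of simplicial edges (this is where finitely many orbits of \emph{directions}, not just of branch points, is used), and then finite support bounds how many translates of each can meet a given arc.
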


However, the above example of a small $(G,\mathcal{F})$-tree that is not tame shows that the space of tame $(G,\mathcal{F})$-trees is not closed.  We will describe conditions under which a limit of tame trees is tame. It will be of interest to introduce yet another class of $(G,\mathcal{F})$-trees. The equivalences in the definition below are straightforward.

\begin{de}
Let $k\in\mathbb{N}^{\ast}$. A small minimal $(G,\mathcal{F})$-tree is \emph{$k$-tame} if one of the following equivalent conditions occurs.
\begin{itemize}
\item For all nonperipheral $g\in G$ and all arcs $I\subseteq T$, if $\langle g\rangle\cap \text{Stab}(I)$ is nontrivial, then its index in $\langle g\rangle$ divides $k$. 
\item For all nonperipheral $g\in G$, and all $l\ge 1$, we have $\text{Fix}(g^l)\subseteq\text{Fix}(g^k)$.
\item For all nonperipheral $g\in G$, and all $l\ge 1$, we have $\text{Fix}(g^{kl})=\text{Fix}(g^{k})$. 
\end{itemize}
\end{de}

Notice in particular that $1$-tame $(G,\mathcal{F})$-trees are those $(G,\mathcal{F})$-trees in which all arc stabilizers are either trivial, or maximally-cyclic and nonperipheral.  

\begin{prop}
For all $k\in\mathbb{N}^{\ast}$, the space of $k$-tame $(G,\mathcal{F})$-trees is closed in the space of small minimal $(G,\mathcal{F})$-trees.
\end{prop}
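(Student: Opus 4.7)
The plan is to mimic the strategy used to prove Proposition \ref{very-small}, exploiting the two approximation lemmas \ref{lemma-limit-good} and \ref{lemma-good-2} which were designed exactly for transferring fixed-arc information between a sequence $(T_n)_{n\in\mathbb{N}}$ and its limit $T$. Smallness is already known to be a closed condition (Proposition \ref{very-small}), so if $(T_n)_{n\in\mathbb{N}}$ is a sequence of $k$-tame $(G,\mathcal{F})$-trees converging to a minimal $(G,\mathcal{F})$-tree $T$, it remains only to verify one of the equivalent $k$-tameness conditions for $T$; I would work with the inclusion $\mathrm{Fix}(g^{kl})\subseteq\mathrm{Fix}(g^k)$ for every nonperipheral $g\in G$ and every $l\ge 1$.

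So I would fix $g\in G$ nonperipheral, $l\ge 1$, and a nondegenerate arc $[a,b]\subseteq\mathrm{Fix}(g^{kl})\subseteq T$, and aim to show $g^k$ fixes $[a,b]$. Applying Lemma \ref{lemma-limit-good} to the element $g^{kl}$, for $n$ large enough either $g^{kl}$ fixes a nondegenerate arc $I_n$ in $T_n$, or $g^{kl}$ is hyperbolic in $T_n$; since the conclusion concerns only $T$, it is legitimate to split into these two cases along subsequences.

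In the first case, $k$-tameness of $T_n$ gives $\mathrm{Fix}(g^{kl})=\mathrm{Fix}(g^k)$ in $T_n$, so $g^k$ also fixes $I_n$. Choosing approximations $a_n$ (resp.\ $b_n$) of $a$ (resp.\ $b$) in $T_n$, the relation $d_{T_n}(a_n,g^{kl}a_n)=2\,d_{T_n}(a_n,I_n)$ (valid because $g^{kl}$ is elliptic and fixes $I_n$) forces $d_{T_n}(a_n,I_n)\to 0$, hence $d_{T_n}(a_n,g^ka_n)\le 2\,d_{T_n}(a_n,I_n)\to 0$, and similarly for $b_n$. Passing to the limit yields $g^ka=a$ and $g^kb=b$, so $g^k$ fixes $[a,b]$ as desired. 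In the second case, $g$ itself must be hyperbolic in $T_n$ for $n$ large (otherwise $g$ would have a fixed point in $T_n$, which would also be fixed by $g^{kl}$), so Lemma \ref{lemma-good-2} applied to $g$ and the power $g^{kl}$ shows that $g$ itself fixes $[a,b]$ in $T$, and \emph{a fortiori} so does $g^k$.

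The heart of the argument is already packaged in Lemmas \ref{lemma-limit-good} and \ref{lemma-good-2}; the only genuinely new ingredient is the observation that under the assumption of $k$-tameness of each $T_n$, any arc fixed by $g^{kl}$ in $T_n$ is automatically fixed by $g^k$, which is what allows the approximation estimate $d_{T_n}(a_n,g^ka_n)\to 0$ to go through in the first case. I do not foresee a serious obstacle, since the problem reduces cleanly to the two cases handled by the existing approximation lemmas.
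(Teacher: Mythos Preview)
Your proposal is correct and follows essentially the same route as the paper's proof: split according to whether the relevant power of $g$ is hyperbolic or elliptic in $T_n$, invoke Lemma \ref{lemma-good-2} in the hyperbolic case, and in the elliptic case use $k$-tameness of $T_n$ to pass from the fixed set of $g^{kl}$ to that of $g^k$ and push the displacement estimates through.

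One small imprecision worth noting: you introduce $I_n$ as ``a nondegenerate arc'' fixed by $g^{kl}$ (as produced by Lemma \ref{lemma-limit-good}) and then write $d_{T_n}(a_n,g^{kl}a_n)=2\,d_{T_n}(a_n,I_n)$. That equality only holds when $I_n$ is the \emph{full} fixed point set $\mathrm{Fix}_{T_n}(g^{kl})$; for a possibly smaller arc the inequality goes the wrong way. The paper simply takes $I_n:=\mathrm{Fix}_{T_n}(g^{l})$ (the full fixed set) from the start, which makes the displacement formula valid. With that adjustment your argument goes through verbatim. The only other cosmetic difference is that the paper verifies the second equivalent form of $k$-tameness ($\mathrm{Fix}(g^l)\subseteq\mathrm{Fix}(g^k)$) and splits cases on whether $g$ itself is hyperbolic in $T_n$, whereas you verify the third form and split on $g^{kl}$; this changes nothing of substance.
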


\begin{proof}
Let $T$ be a small, minimal $(G,\mathcal{F})$-tree, and let $(T_n)_{n\in\mathbb{N}}$ be a sequence of $k$-tame $(G,\mathcal{F})$-trees that converges to $T$. Let $g\in G$, and assume that there exists $l\ge 1$ such that $g^l$ fixes a nondegenerate arc $[a,b]\subseteq T$. If $g$ is hyperbolic in $T_n$ for infinitely many $n\in\mathbb{N}$, then Lemma \ref{lemma-good-2} implies that $g$ fixes $[a,b]$. We can therefore assume that for all $n\in\mathbb{N}$, the fixed point set $I_n$ of $g^l$ is nonempty. Let $a_n$ (resp. $b_n$) be an approximation of $a$ (resp. $b$) in $T_n$. Since $d_{T_n}(g^la_n,a_n)$ and $d_{T_n}(g^lb_n,b_n)$ both converge to $0$, the distances $d_{T_n}(a_n,I_n)$ and $d_{T_n}(b_n,I_n)$ both converge to $0$. As $T_n$ is $k$-tame, this implies that the distances of both $a_n$ and $b_n$ to the fixed point set of $g^k$ in $T_n$ converge to $0$. So both $d_{T_n}(g^{k}a_n,a_n)$ and $d_{T_n}(g^{k}b_n,b_n)$ converge to $0$, and therefore $g^{k}$ fixes $[a,b]$ in $T$. This shows that $T$ is $k$-tame.
\end{proof}

\begin{prop}\label{good}
A minimal $(G,\mathcal{F})$-tree is tame if and only if there exists $k\in\mathbb{N}^{\ast}$ such that $T$ is $k$-tame.
\end{prop}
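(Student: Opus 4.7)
The plan is to prove the two implications separately, using distinct tools.

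For the implication ``if $T$ is $k$-tame for some $k$, then $T$ is tame,'' my approach is via geometric approximation. By Theorem~\ref{approx-by-geom-2} and Corollary~\ref{lemma-abg}, $T$ is strongly approximated by a sequence $(T_n)_{n\in\mathbb{N}}$ of $k$-tame geometric $(G,\mathcal{F})$-trees with morphisms $j_n:T_n\to T$. Each $T_n$ is geometric, hence by Corollary~\ref{directions} it has only finitely many $G$-orbits of directions at branch points. I then aim to show that, for any fixed $n$, the natural map from $G$-orbits of (branch point, direction) pairs in $T_n$ to such orbits in $T$ is surjective. Surjectivity follows from three facts: $j_n$ is surjective (by $G$-equivariance and minimality of $T$); morphisms carry directions to directions (being locally isometric after subdivision); and a point in $T_n$ mapping to a branch point of $T$ is itself a branch point of $T_n$ since distinct directions at its image have distinct preimages. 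The number of orbits in $T$ is therefore bounded above by the finite number in $T_n$, yielding tameness.

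For the converse ``if $T$ is tame, then $T$ is $k$-tame for some $k$,'' I would apply Levitt's decomposition (Theorem~\ref{Levitt-good}) to write $T$ as a graph of actions with simplicial small Bass--Serre tree $S$ and vertex trees $T_v$ of dense $G_v$-orbits. Tameness of $T$ forces the underlying graph of groups to be finite, so $S$ has only finitely many $G$-conjugacy classes of edge stabilizers $\langle h_1\rangle,\dots,\langle h_M\rangle$. Each vertex tree $T_v$ inherits tameness, and having dense orbits it has trivial arc stabilizers by Proposition~\ref{dense-arcs}. Hence every nontrivial arc stabilizer of $T$ is contained in a conjugate of some $\langle h_i\rangle$. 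For each $i$, let $\bar h_i$ generate the maximal cyclic subgroup of $G$ containing $h_i$, and set $n_i=[\langle\bar h_i\rangle:\langle h_i\rangle]$; these indices are finite by smallness and the free-product structure. The plan is to choose $k$ as a common multiple of the $n_i$, further multiplied to absorb the extra index drops described below, and to verify $\mathrm{Fix}(g^l)\subseteq\mathrm{Fix}(g^k)$ by showing that any nonperipheral $g$ with $g^l$ fixing an arc must lie in a conjugate of some $\langle\bar h_i\rangle$ (using that its centralizer in $G$ is abelian because non-peripheral elements in a free product have cyclic centralizers), so that the divisibility of $k$ by the appropriate index forces $g^k$ to fix the same arc.

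The main obstacle lies in the converse, specifically in controlling these extra ``index drops.'' Although the Bass--Serre edge stabilizers fall into finitely many conjugacy classes, an arc in $T$ spanning several edges of $S$ has stabilizer equal to an intersection of conjugates of the $\langle h_i\rangle$, which can a priori be a proper finite-index subgroup of the relevant maximal cyclic supergroup with unbounded index. To obtain a uniform bound, the plan is to analyse the action of each $\bar h_i$ on the set of directions at an extremal branch point $y$ of $\mathrm{Fix}(\bar h_i)$: tameness ensures that only finitely many $\mathrm{Stab}(y)$-orbits of directions occur at $y$, so only finitely many finite $\langle\bar h_i\rangle$-orbits of directions are available for some power $\bar h_i^N$ to fix. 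This bounds the exponents $N$ for which $\mathrm{Fix}(\bar h_i^N)$ properly extends $\mathrm{Fix}(\bar h_i)$, and combining these bounds across the finitely many conjugacy classes yields the required uniform $k$.
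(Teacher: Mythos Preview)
Your argument for the implication ``$k$-tame $\Rightarrow$ tame'' has a genuine gap. The claim that ``a point in $T_n$ mapping to a branch point of $T$ is itself a branch point of $T_n$'' is false: under a morphism $j_n:T_n\to T$, a branch point $x\in T$ may have several preimages in $T_n$, each of valence~$2$, with the extra directions at $x$ arising only after these preimages get identified by folding. Concretely, if $j_n$ folds two edges $[v,w_1]$ and $[v,w_2]$ of $T_n$ onto a single edge $[v,w]$ of $T$, and each $w_i$ has valence~$2$ in $T_n$, then $w$ has valence~$3$ in $T$ while neither preimage $w_1,w_2$ is a branch point. Thus the map from $G$-orbits of directions at branch points in $T_n$ to such orbits in $T$ need not be surjective for a fixed $n$. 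Strong convergence only gives you that any \emph{finite} collection of orbits of directions in $T$ lifts to $T_n$ for $n$ large enough (depending on the collection), which by itself does not bound the total number of orbits in $T$ unless you already know a uniform bound on the $T_n$. The paper handles this direction differently: it postpones it to the next proposition, which establishes a uniform bound $\gamma(k)$ on the number of orbits of directions in any $k$-tame tree, first for geometric trees (using the index computation and an induction on Kurosh rank in the simplicial case), and then for nongeometric trees by lifting finitely many directions at a time to a $k$-tame geometric approximation.

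For the implication ``tame $\Rightarrow$ $k$-tame,'' your approach via Levitt's decomposition and reduction to the simplicial case is exactly the paper's. However, the ``main obstacle'' you identify is illusory, and your proposed fix via counting direction-orbits is unnecessary. If an arc $I$ in the simplicial tree spans several edges whose stabilizers are conjugates $w_j\langle h_{i_j}\rangle w_j^{-1}$ with nontrivial common intersection, then all these conjugates lie in a single maximal cyclic subgroup $\langle\bar c\rangle\cong\mathbb{Z}$. But $\mathbb{Z}$ has a unique subgroup of each given index, so each $w_j\langle h_{i_j}\rangle w_j^{-1}$ equals $\langle\bar c^{\,n_{i_j}}\rangle$, and their intersection is $\langle\bar c^{\,L}\rangle$ with $L=\operatorname{lcm}$ of the relevant $n_{i_j}$. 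Hence for any nonperipheral $g=\bar c^{\,a}$, the index $[\langle g\rangle:\langle g\rangle\cap\operatorname{Stab}(I)]=\operatorname{lcm}(a,L)/a$ divides $L$, which divides $k:=\operatorname{lcm}(n_1,\dots,n_M)$. So $k$-tameness follows directly, with no further index drops to control.
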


\begin{proof}
Let $T$ be a tame minimal $(G,\mathcal{F})$-tree. By Theorem \ref{Levitt-good}, the tree $T$ splits as a graph of actions, all of whose vertex actions have dense orbits for the action of their stabilizer. As tame $(G,\mathcal{F})$-trees with dense orbits have trivial arc stabilizers by Lemma~\ref{dense-arcs}, we do not modify the collection of arc stabilizers of $T$ if we collapse all vertex trees to points. We can therefore reduce to the case where $T$ is simplicial. In this case, minimality implies that the $G$-action on $T$ has finitely many orbits of edges, from which it follows that $T$ is $k$-tame for some $k\in\mathbb{N}$. The converse statement will follow from the following proposition.
\end{proof}

\begin{prop}
For all $k\in\mathbb{N}^{\ast}$, there exists $\gamma(k)\in\mathbb{N}$ such that any $k$-tame minimal $(G,\mathcal{F})$-tree has at most $\gamma(k)$ orbits of directions at branch points.
\end{prop}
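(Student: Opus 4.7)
The plan is to reduce to geometric $k$-tame trees by approximation, then bound the count combinatorially. By Theorem~\ref{approx-by-geom-2}, $T$ is a strong limit of a sequence of minimal geometric $k$-tame $(G,\mathcal{F})$-trees $T_n$, equipped with $G$-equivariant morphisms $j_n:T_n\to T$. Since a morphism is piecewise isometric on each segment, it is locally injective on directions: distinct directions at a point $x\in T$ lift to distinct directions at any preimage $\widetilde{x}\in j_n^{-1}(x)$. Consequently, distinct $G$-orbits of directions at branch points of $T$ inject into $G$-orbits of directions at branch points of $T_n$, so it suffices to establish the bound for geometric $k$-tame trees, uniformly in the choice of the defining finite subtree~$K$.

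For a geometric $k$-tame tree $T=T_K$, I would combine the index formula from Proposition~\ref{bound-index} with the combinatorial analysis of Section~\ref{sec-stab}. The identity $\sum_{\mathcal{O}} i(\mathcal{O}) = 2\,\mathrm{rk}_K(G,\mathcal{F}) - 2$, together with the observation that $i(x)\ge 0$ at every point of a minimal small tree (a direct consequence of the definition, noting that direction stabilizers sit inside the point stabilizer), controls the number of branch-point orbits with positive index. Any branch point admitting at least one $\mathrm{Stab}(x)$-orbit of directions with trivial stabilizer has $v_1(x)\ge 1$, hence $i(x)\ge 1$, so such orbits number at most $2\,\mathrm{rk}_K(G,\mathcal{F})-2$. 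At each such branch point, the count of direction orbits with \emph{nontrivial} stabilizer is $k$-dependent: by $k$-tameness, every such stabilizer is $\langle h^m\rangle$ with $m\mid k$ inside a maximal cyclic subgroup $\langle h\rangle$ of $G$, which gives at most $d(k)$ (the number of divisors of $k$) possibilities per maximal cyclic subgroup contained in $\mathrm{Stab}(x)$, while the number of such subgroups is controlled by $\mathrm{rk}_K(\mathrm{Stab}(x))$.

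The main obstacle is to bound the contribution from \emph{saturated} branch points, i.e.\ those at which every $\mathrm{Stab}(x)$-orbit of directions has nontrivial stabilizer (so $v_1(x)=0$ and $i(x)$ may vanish). Such points cannot occur in a very small tree thanks to the triviality of tripod stabilizers, but they may appear in a $k$-tame tree. The key idea is that direction orbits at a saturated branch point correspond to connected components of the graph $\mathcal{S}'$ from Lemma~\ref{lemma-index-2}, whose vertices lie above the finite singular set $\mathrm{Sing}\subseteq F=X/G$ and whose edges come from the partial isometries of the system on $F$. The $k$-tameness condition constrains how these components can be produced: any chain of partial isometries relating two vertices in the same component encodes a group element fixing an arc of $T_K$, whose index in its maximal cyclic root must divide $k$ by $k$-tameness, which limits the relevant combinatorial patterns. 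Combining this with a bound on $|\mathrm{Sing}|$ in terms of $\mathrm{rk}_K(G,\mathcal{F})$ (via the data of the standard $(G,\mathcal{F})$-free splitting and the number $|\Phi| = \mathrm{rk}_K(G,\mathcal{F})$ of partial isometries) and with the bounded contribution of saturated-branch-point orbits to the index sum, one obtains an explicit $\gamma(k)$ that is polynomial in $k$ and $\mathrm{rk}_K(G,\mathcal{F})$.
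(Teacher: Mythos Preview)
Your reduction to the geometric case is essentially the paper's own last paragraph, but your justification is backwards. A morphism $j_n:T_n\to T$ is \emph{not} locally injective on directions at a point of $T_n$: morphisms fold, so two directions at $\widetilde{x}$ can have the same image. What you need is the converse, that a direction at $x\in T$ has a preimage at a suitable $\widetilde{x}\in T_n$, and that is exactly what strong convergence buys you (for $n$ large enough depending on the finite collection of directions you want to lift, as in the nongeometric part of the proof of Proposition~\ref{bound-index}). This is fixable, but you should state it correctly.

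The real gap is in your treatment of the geometric case. Your proposed bound hinges on controlling $|\mathrm{Sing}|$ in terms of $\mathrm{rk}_K(G,\mathcal{F})$, but this is simply false: $\mathrm{Sing}$ includes the branch points of the finite forest $F$, and this number depends on the choice of $K$, not just on $(G,\mathcal{F})$. Different geometric $k$-tame trees require finite subtrees $K$ of arbitrarily large combinatorial complexity, so you get no uniform bound this way. Likewise, your sketch for ``saturated'' branch points (those with $v_1(x)=0$ and $i(x)=0$) does not actually bound their \emph{number}: observing that each arc stabilizer has index dividing $k$ in its root constrains the local picture at a single saturated point, but says nothing about how many $G$-orbits of such points can occur. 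This is precisely the content the index formula cannot see, and it is where the $k$-tameness hypothesis must be used globally rather than locally.

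The paper's argument avoids this by a genuinely different route. In the geometric case it invokes the dynamical decomposition: directions inside dense-orbit pieces have trivial stabilizer (Proposition~\ref{dense-arcs}) and therefore each contribute positively to the index, so their number is bounded by $2\,\mathrm{rk}_K(G,\mathcal{F})-2$. Collapsing those pieces reduces to the simplicial case, which is then handled by induction on the Kurosh rank: using Lemma~\ref{unfold} one finds an edge with trivial stabilizer, removes it, and observes that the possibly non-minimal remainder consists of a minimal tree of strictly smaller Kurosh rank with a segment of at most $k$ edges attached (this is exactly where $k$-tameness enters, bounding the length of the ascending chain $G_{e_1}\subseteq G_{e_2}\subseteq\cdots$). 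This yields a recursive bound $\gamma(k,l)$ in $k$ and $l=\mathrm{rk}_K(G,\mathcal{F})$. You should either carry out this induction, or supply a genuine argument bounding the number of orbits of saturated branch points that does not pass through $|\mathrm{Sing}|$.
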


\begin{proof}
Let $T$ be a $k$-tame minimal $(G,\mathcal{F})$-tree. We first assume that $T$ is geometric. Let $\mathcal{G}$ be the dynamical decomposition of $T$. We recall that all vertex trees of $\mathcal{G}$ are either arcs or have dense orbits. As $T$ is geometric, there are finitely many orbits of directions in $T$ (see Corollary \ref{directions}). This implies that arc stabilizers are trivial in the vertex trees of $\mathcal{G}$ with dense orbits (Proposition \ref{dense-arcs}). If $x$ is a branch or inversion point of $T$ contained in one of the vertex trees with dense orbits $T_v$ of $\mathcal{G}$, then all directions at $x$ contained in $T_v$ have a positive contribution to the index $i(T)$. It follows from Proposition \ref{bound-index} that there is a bound on the number of such orbits of directions. Therefore, we can collapse all the vertex trees with dense orbits to points, and reduce to the case where $T$ is simplicial.

In this case, we argue by induction on $\text{rk}_K(G,\mathcal{F})$, and show that there is a bound $\gamma(k,l)$ on the number of orbits of directions in any $k$-tame minimal simplicial $(G,\mathcal{F})$-tree with $\text{rk}_K(G,\mathcal{F})\le l$. By splitting one of the vertex stabilizers of the splitting relatively to incident edge stabilizers if needed (which is made possible by Lemma \ref{unfold}, and can only increase the number of orbits of directions), we can assume that $T$ contains an edge $e$ with trivial stabilizer. By removing from $T$ the interior of the edges in the orbit of $e$ in $T$, we get one or two orbits of trees, whose stabilizers have a strictly smaller Kurosh rank. Let $T'$ be one of the trees obtained in this way, whose stabilizer we denote by $G'$. Then $T'$ is $k$-tame. If $T'$ is minimal, then we are done by induction. However, the tree $T'$ may fail to be minimal. The quotient graph $T'/G'$ consists of a minimal graph of groups $T'_{min}/G'$, with a segment $I=e_1\cup\dots\cup e_n$ attached to $T'_{min}/G'$ at one of its extremities (where we denote by $e_i$ the edges in $I$). All edge groups are cyclic, and they satisfy $G_{e_i}=G_{o(e_i)}\subseteq G_{e_{i+1}}$ for all $i\in\{1,\dots,n-1\}$. Since $T$ is $k$-tame, we have $n\le k$. By induction, there are at most $\gamma(k,l-1)$ orbits of directions in $T'_{min}$, so we get a uniform bound on the possible number of orbits of directions in $T$. 

We have thus shown that there is a uniform bound $\gamma(k)$ on the possible number of orbits of directions in a minimal $k$-tame geometric $(G,\mathcal{F})$-tree. Let now $T$ be a minimal nongeometric $(G,\mathcal{F})$-tree, and assume that $T$ has strictly more than $\gamma(k)$ orbits of directions. Arguing as in the proof of Proposition \ref{bound-index} in the nongeometric case, we can find a geometric $(G,\mathcal{F})$-tree $T'$ in which we can lift at least $\gamma(k)+1$ orbits of directions. Lemma \ref{lemma-abg} shows that the approximation $T'$ can be chosen to be $k$-tame. This is a contradiction.
\end{proof}

We finally establish one more condition under which a limit of tame trees is tame, which will be used in \cite{Hor14-6}.

\begin{prop}\label{limit-one-edge}
Let $(T_n)_{n\in\mathbb{N}}$ be a sequence of simplicial metric small $(G,\mathcal{F})$-trees that converges to a minimal $(G,\mathcal{F})$-tree $T$. If all trees $T_n$ contain a single orbit of edges, then $T$ is tame.
\end{prop}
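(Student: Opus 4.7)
The plan is to verify the two properties defining tameness of $T$: smallness, and finiteness of the number of orbits of directions at branch points.

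Smallness of $T$ follows from the closedness of smallness in the equivariant Gromov--Hausdorff topology on minimal $(G,\mathcal{F})$-trees, exactly as in the proof of Proposition \ref{very-small}. Peripheral elements of $G$ fix a unique point in $T$ by Lemma \ref{lemma-limit-good}. If $g,h\in G$ fix a common nondegenerate arc in $T$, then up to a subsequence either both are hyperbolic in $T_n$ with long overlapping translation axes, and the argument in the proof of Proposition \ref{very-small} shows $\langle g,h\rangle$ is cyclic via smallness of $T_n$; or one of them is elliptic in $T_n$ and fixes an arc containing the approximations of the endpoints, in which case cyclicity of $\langle g,h\rangle$ follows directly from the smallness of $T_n$.

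For the second property, I would use that tameness is equivalent to $k$-tameness for some $k$ (Proposition \ref{good}) and that $k$-tameness is closed in the space of small minimal $(G,\mathcal{F})$-trees (proven earlier in this section). It thus suffices to produce an integer $k$ for which every $T_n$ (for $n$ large) is $k$-tame. After extracting a subsequence, I may assume that all quotient graphs $T_n/G$ share a common topological type (either a segment with two vertex orbits, or a loop with one vertex orbit), and that the edge stabilizers $H_n$ are either all trivial or all nontrivial cyclic. If all $H_n$ are trivial, each $T_n$ is a free splitting with trivial arc stabilizers and is hence $1$-tame, so $T$ is $1$-tame and we are done.

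The main obstacle is the case where the edge stabilizers are nontrivial cyclic groups $H_n=\langle c_n\rangle$. The smallest $k$ for which $T_n$ is $k$-tame is then controlled by the depth of root-extractions of elements of $\langle c_n\rangle$ inside the vertex stabilizers of $T_n$: any element $g\in G$ some power of which fixes an arc of $T_n$ must be elliptic and sit in a vertex group, with that power lying in a conjugate of $\langle c_n\rangle$. To establish a uniform bound on these depths, I would argue by contradiction: were it unbounded along a subsequence, one could choose nonperipheral elements $g_n\in G$ and integers $l_n\to\infty$ such that $g_n^{l_n}$ fixes an edge of $T_n$ while $g_n$ fixes no arc of $T_n$ containing that edge. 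Tracking $g_n$ and $g_n^{l_n}$ through the convergence $T_n\to T$ by means of Lemmas \ref{lemma-limit-good}--\ref{lemma-good-2}, and exploiting the strong rigidity coming from the single-edge-orbit hypothesis (in particular the fact that the vertex groups are uniformly rigid across $n$ after passing to the chosen subsequence), should force in the limit $T$ an arc stabilizer that is either non-cyclic or contains a nontrivial peripheral element, contradicting the smallness of $T$ already established. Once the uniform $k$ is in hand, the closedness of $k$-tameness gives $T$ is $k$-tame, and Proposition \ref{good} concludes that $T$ is tame.
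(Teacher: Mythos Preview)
Your argument has a genuine gap in the nontrivial-edge-stabilizer case: the sequence $(T_n)$ need \emph{not} be uniformly $k$-tame for any fixed $k$. For instance, in $G=\langle a\rangle\ast\langle b\rangle$ one can take $T_n$ to be the Bass--Serre tree of the one-edge splitting $\langle a\rangle\ast_{\langle a^{n}\rangle}(\langle a^{n}\rangle\ast\langle b\rangle)$; here $\langle a\rangle\cap\text{Stab}(e)=\langle a^{n}\rangle$ has index $n$ in $\langle a\rangle$, so $T_n$ is not $k$-tame unless $n\mid k$. Your proposed contradiction argument cannot rescue this: the elements $g_n$ vary with $n$, so Lemmas \ref{lemma-limit-good}--\ref{lemma-good-2} (which concern a \emph{fixed} $g\in G$) do not apply, and there is no single element in the limit whose arc stabilizer would violate smallness. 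The appeal to ``uniform rigidity'' of the vertex groups is also unjustified: the vertex groups of $T_n$ genuinely change with $n$ in examples like the one above.

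The paper's proof bypasses uniform $k$-tameness of the approximants entirely by making a dichotomy on the edge lengths. If the edge lengths are bounded below, one rescales so that all translation lengths are integers, forcing $T$ to be simplicial and hence tame. If the edge lengths tend to $0$, the key geometric input is Lemma \ref{acylindrical}: in a small simplicial tree with a single orbit of edges, the fixed point set of any element is a star of diameter at most $2$ edge-lengths. Thus when edge lengths shrink to $0$, no elliptic element can fix a long arc in $T_n$; so if $g^l$ fixes a nondegenerate arc in $T$, then $g$ is eventually hyperbolic in $T_n$, and the standard axis argument gives that $g$ itself fixes the arc. This shows $T$ is very small (hence $1$-tame) directly, without any control on the tameness constants of the $T_n$.
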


We will make use of the following lemma.

\begin{lemma}\label{acylindrical}
Let $T$ be a simplicial small $(G,\mathcal{F})$-tree, with one orbit of edges. Then the fixed point set of any element of $G$ is a star of diameter at most $2$ for the simplicial metric on $T$.
\end{lemma}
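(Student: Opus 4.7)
I would argue by contradiction: assume some nontrivial $g \in G$ has $\text{Fix}(g)$ of simplicial diameter at least $3$, so that $g$ fixes three consecutive edges $e_1, e_2, e_3$ meeting at vertices $v_1, v_2$ of $T$.

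First I would show that all three edge stabilisers coincide. Since $T$ is small, each $\text{Stab}(e_i)$ is a nontrivial cyclic subgroup of $G$ conjugate to the edge group $C = \langle c\rangle$ of the one-edge quotient graph of groups $T/G$, so $g$ is nonperipheral. The standard fact that in a free product $G = G_1 \ast \cdots \ast G_k \ast F_N$ the centraliser of any nonperipheral element is infinite cyclic---proved by letting $Z_G(g)$ act on the axis of $g$ in the Bass--Serre tree of $G$'s own free-product decomposition, where the pointwise stabiliser of the axis is trivial---places the three edge stabilisers inside the cyclic group $M := Z_G(g)$. Two subgroups of a cyclic group are nested; proper nesting would force a conjugate of $c$ to equal a proper power of another conjugate of $c$, contradicting the fact that the translation length of a nonperipheral element in the Bass--Serre tree of $G$ scales with its exponent. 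Hence $\text{Stab}(e_1) = \text{Stab}(e_2) = \text{Stab}(e_3) =: S$.

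Since $e_1 \neq e_2$ are two distinct $S$-fixed edges at $v_1$, there exists $\sigma_1 \in \text{Stab}(v_1) \cap (N_G(S) \setminus S)$ taking $e_1$ to $e_2$; symmetrically one gets $\sigma_2 \in \text{Stab}(v_2) \cap (N_G(S) \setminus S)$ at $v_2$. I would then check that $N_G(S) \subseteq M \cup M\tau$ for some (possibly absent) element $\tau$ inverting $c$, and that every element of $N_G(S)$ is elliptic in $T$: the group $M$ is elliptic (else $c = m^e$ would be hyperbolic, contradicting $c$ fixing $e_2$) and $\tau^2 \in M$ forces $\tau$ elliptic.

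In Case A where $T/G$ is a non-loop edge and $G = A \ast_C B$, the vertices $v_1, v_2$ have distinct types, so $\sigma_1 \in B \setminus C$ and $\sigma_2 \in A \setminus C$. The product $\sigma_2 \sigma_1$ is then cyclically reduced of length $2$ in the amalgamated normal form, hence is hyperbolic in $T$; but $\sigma_2 \sigma_1 \in N_G(S)$, contradicting ellipticity. In Case B where $T/G$ is a loop and $G = A \ast_C$, the same strategy works with the HNN normal form: depending on whether each extra $S$-fixed edge at $v_1, v_2$ is outgoing (yielding an element of $N_A(C_1) \setminus C_1$) or incoming (yielding an element of $A$ conjugating $C_1$ to $C_2$, which combines with $t$), one produces an element of $N_G(S)$ whose HNN-reduced form contains at least one stable letter, forcing hyperbolicity in $T$ and again contradicting ellipticity. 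The main obstacle is this Case B bookkeeping, which requires enumerating the several outgoing/incoming configurations for the pair of extra edges and verifying in each that the combining element survives Britton reduction with positive $t$-length; the amalgamated case is clean because the single edge-type of $T/G$ automatically yields alternating $A$- and $B$-factors in $\sigma_2 \sigma_1$.
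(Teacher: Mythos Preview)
Your approach is sound in Case~A and rests on the same algebraic facts the paper uses (the centraliser of a nonperipheral element in a free product is infinite cyclic, and the normaliser of a maximal cyclic subgroup has index at most~$2$ over it, so every element normalising $S$ is elliptic). The argument that $\text{Fix}(\sigma_1)$ and $\text{Fix}(\sigma_2)$ are disjoint with bridge $e_2$, so $\sigma_2\sigma_1$ is hyperbolic, is correct. However, you rightly flag that Case~B is only sketched: when the two $S$-fixed edges at a vertex have opposite orientations (one incoming, one outgoing in the HNN Bass--Serre tree), there is no element of the vertex stabiliser exchanging them, and the bookkeeping you outline --- pushing through the stable letter and checking that the resulting word has positive $t$-length after Britton reduction in each configuration --- is genuinely unpleasant and not carried out.

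The paper's proof sidesteps this entirely by a simpler and uniform observation. Given any two edges $e_1,e_2$ in $\text{Fix}(g)$, the single-orbit hypothesis gives some $h\in G$ with $he_1=e_2$; there is no need for $h$ to fix a vertex. Since $g$ and $h^{-1}gh$ both lie in the cyclic group $\text{Stab}(e_1)$, one gets (exactly as you argue) that $h$ normalises $\langle g\rangle$, hence $h$ is elliptic. But an elliptic isometry carrying $e_1$ to $e_2$ forces $e_1$ and $e_2$ to be equidistant from $\text{Fix}(h)$, and in particular to point towards each other along the segment joining them. Since this holds for \emph{every} pair of $g$-fixed edges, $\text{Fix}(g)$ cannot contain an embedded path of three edges (the two extreme edges of such a path would have to point in the same direction), so it is a star of diameter at most~$2$.

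The moral: you already had the key ingredient (ellipticity of any element carrying one $g$-fixed edge to another). Rather than manufacturing a hyperbolic product $\sigma_2\sigma_1$ from vertex-stabiliser elements and chasing normal forms, you can read off the star shape directly from that ellipticity, and the amalgam/HNN dichotomy disappears.
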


\begin{proof}
Let $e_1$ and $e_2$ be two edges of $T$ stabilized by a common element $g\in G$. As $T$ has only one orbit of edges, there exists $h\in G$ such that $he_1=e_2$. By choosing an orientation on $e_1$, this relation defines an orientation on $e_2$. Then $hge_1=ghe_1$, which implies that $h$ commutes with $g$ because $T$ is small. Hence $h$ is elliptic in $T$. This implies that $e_1$ and $e_2$ point in opposite directions in $T$. As this is true of any pair of edges stabilized by $g$, the fixed point set of $g$ has the desired description.
\end{proof}

\begin{proof}[Proof of Proposition \ref{limit-one-edge}]
Up to possibly passing to a subsequence, one of the following situations occurs.
\\
\\
\textit{Case 1}: The length of the unique orbit of edges in $T_n$ converges to $0$.
\\
\\
In this case, we will show that $T$ is very small. This implies that $T$ is $1$-tame, and hence tame by Proposition \ref{good}. We have seen in the proof of Proposition \ref{very-small} that limits of small $(G,\mathcal{F})$-trees are small, and limits of trees with trivial tripod stabilizers have trivial tripod stabilizers. Let $g\in G$ be a nonperipheral element, and assume that there exists $l\ge 1$ such that $g^l$ fixes a nondegenerate arc $[a,b]\subseteq T$. Let $a_n$ (resp. $b_n$) be an approximation of $a$ (resp. $b$) in $T_n$. If $g$ were elliptic in $T_n$, then both $a_n$ and $b_n$ would be arbitrarily close to the fixed point set $X_n$ of $g^l$ in $T_n$, as $n$ goes to $+\infty$. It follows from Lemma \ref{acylindrical} that the diameter of $X_n$ in $T_n$ converges to $0$ as $n$ tends to $+\infty$. This contradicts the fact that $d_{T_n}(a_n,b_n)$ is bounded below (because $a\neq b$). Therefore, for $n$ large enough, the element $g$ is hyperbolic in $T_n$. The distances $d_{T_n}(a_n,g^l a_n)$ and $d_{T_n}(b_n,g^l b_n)$ converge to $0$, so the points $a_n$ and $b_n$ are arbitrarily close to the axis of $g$ in $T_n$, and $||g||_{T_n}$ converges to $0$. This implies that $d_{T_n}(a_n,ga_n)$ and $d_{T_n}(b_n,gb_n)$ both tend to $0$, so $g$ fixes $[a,b]$. 
\\
\\
\textit{Case 2}: There is a positive lower bound on the length of the unique orbit of edges in $T_n$.

Up to passing to a subsequence and rescaling $T_n$ by a factor $\lambda_n>0$ converging to some $\lambda>0$, we can assume that all trees $T_n$ have edge lengths equal to $1$. This implies that all translation lengths in $T_n$ belong to $\mathbb{Z}$, so all translation lengths in $T$ belong to $\mathbb{Z}$. It follows that $T$ is a simplicial metric tree (see \cite[Theorem 10]{Mor92}), so it has finitely many orbits of directions. Since a limit of small $(G,\mathcal{F})$-trees is small, the tree $T$ is tame. 
\end{proof}

\bibliographystyle{amsplain}
\bibliography{Horbez-biblio}

\end{document}